\begin{document}
\addtolength{\textheight}{-2\baselineskip}
\addtolength{\topmargin}{\baselineskip}
\frontmatter
\title{Hodge theory of the middle convolution}

\author[M.~Dettweiler]{Michael Dettweiler}
\address[M.~Dettweiler]{Lehrstuhl IV für Mathematik / Zahlentheorie\\
Department of Mathematics\\
University of Bayreuth\\
95440 Bayreuth\\
Germany}
\email{michael.dettweiler@uni-bayreuth.de}
\urladdr{http://www.zahlentheorie.uni-bayreuth.de/}

\author[C.~Sabbah]{Claude Sabbah}
\address[C.~Sabbah]{UMR 7640 du CNRS\\
Centre de Mathématiques Laurent Schwartz\\
École polytechnique\\
F--91128 Palaiseau cedex\\
France}
\email{sabbah@math.polytechnique.fr}
\urladdr{http://www.math.polytechnique.fr/~sabbah}

\thanks{The research of C.S.\ was supported by the grant ANR-08-BLAN-0317-01 of the Agence natio\-nale de la recherche.}

\subjclass[2010]{14D07, 32G20, 32S40, 34M99}

\keywords{Middle convolution, rigid local system, Katz algorithm, Hodge theory, $\ell$-adic representation}

\begin{abstract}
We compute the behaviour of Hodge data by tensor product with a unitary rank-one local system and middle convolution by a Kummer unitary rank-one local system for an irreducible variation of polarized complex Hodge structure on a punctured complex affine line.
\end{abstract}
\maketitle

\tableofcontents
\mainmatter

\section*{Introduction}
Given an irreducible local system on a punctured projective line (over the field of complex numbers), the Katz algorithm \cite{Katz96} provides a criterion for testing whether this local system is physically rigid: this algorithm should terminate with a rank-one local system (\cf\S\ref{subsec:Katzalg}). This procedure is a successive application of tensor product with a rank-one local system and middle convolution with a Kummer local system. If the local monodromies of the rigid local system we start with have absolute value equal to one, then so do the eigenvalues of the terminal rank-one local system, which is then a unitary local system. Going the other direction in the algorithm, we conclude that the original local system underlies a variation of polarized complex Hodge structure (see also Theorem \ref{th:Simpson} due to C\ptbl Simpson for a more general argument). According to a general result of Deligne \cite{Deligne87} (\cf Prop\ptbl\ref{prop:Deligne}), any irreducible local system underlies at most one such variation, up to a shift of the filtration.

Our purpose in this article is to complement the Katz algorithm with the behaviour of various Hodge numerical data, when they are present, in order to be able to compute these Hodge data after each step of the algorithm. There are data of a local nature (Hodge numbers of the variation, Hodge numbers of vanishing cycles) and of a global nature (degrees of some Hodge bundles), and this set of data is enough to compute the same set of data after each step of the algorithm. One of the main tools is a general Thom-Sebastiani formula due to M\ptbl Saito \cite{MSaito90-11}.

As an application we compute the length of the Hodge filtration and the degree of the Hodge bundles of some local systems with $G_2$-monodromy. A restricted set of possible lengths may be obtained from \cite[Chap.\,IV]{G-G-K12} and, on a given example, the actual length may be deduced by eliminating various possibilities, as in \cite[\S9]{K-P12}, which treats an example of \cite{D-R10}. Here we do not use this a~priori knowledge.

On the one hand, by a direct application of the Katz algorithm with Hodge data, we compute the Hodge data of a physically rigid $G_2$-local system~$\cG$ on $\PP^1\setminus\nobreak \{x_1,x_2,x_3,x_4\}$ whose existence is proved in \cite[Th\ptbl1.3.2]{D-R10}. We find a Hodge filtration of length three, which is the minimal possible length for an irreducible local system with $G_2$-monodromy.

On the other hand, we compute the Hodge data for one of the orthogonally rigid local systems with $G_2$-monodromy classified in \cite{D-R12}. Since this local system is not physically rigid, we cannot directly apply our formulas, as in the previous case, and a supplementary computation is needed. The result is that the Hodge filtration has maximal length, equal to the rank (seven) of the local system. By applying the recent generalization of the potential automorphy criteria of \cite{B-G-G-T10} by Patrikis and Taylor~\cite{PatrikisTaylor} we are able to produce potentially automorphic representations $\Gal(\ov\QQ/\QQ)\to\GL_7(\QQ_\ell)$ for each prime number $\ell$.

\smallskip\noindent\emph{Acknowledgements.}
We thank Carlos Simpson for helpful comments and the referees for their careful reading of the manuscript and their useful suggestions.

\section{Preliminary results}
We review here various results on Fourier transform and middle convolution for holonomic modules on the affine line. These results exist in the literature, but are usually not directly proved for holonomic modules, but for $\ell$-adic sheaves (\cite{Katz96}), or assume a regular singularity condition. We give here a self-contained presentation for holonomic modules, which relies on various references (\cite{Malgrange91}, \cite{B-E04}, a letter of Deligne to Katz, dated October 2006, \cite{Bibi07a}, \cite{Arinkin08}).

\subsection{Local data of a holonomic $\cD(\Afu)$-module}\label{subsec:locnumdatahol}
Let $M$ be a holonomic $\Clt$-module and denote by $\bmx=\{x_1,\dots,x_r\}\subseteq \Afu$ the finite set of its singular points at finite distance. We will also consider $x_{r+1}=\infty$ as a singular point of $M$. Let $x_i$ be a singular point and let $x$ be a local coordinate at $x_i$ (\eg, $x=t-x_i$ if $i=1,\dots,r$ and $x=1/t$ if $i=r+1$). We set
\[
\wh M_{x_i}=\begin{cases}
\CC\lcr x\rcr\otimes_{\CC[t]}M&\text{if }i=1,\dots,r,\\
\CC\lpr x\rpr\otimes_{\CC[t]}M&\text{if }i=r+1.
\end{cases}
\]
These are holonomic $\CC\lcr x\rcr\langle\partial_x\rangle$-modules. Let $h(M)=\dim\CC(t)\otimes_{\CC[t]}M$ denote the generic rank of $M$ and $h_{x_i}(M)=\dim\CC\lpr x\rpr\otimes_{\CC[t]}M$ denote the generic rank of~$\wh M_{x_i}$. We have $h_{x_i}(M)=h(M)$ for each $i=1,\dots,r+1$.

\subsubsection*{The Levelt-Turrittin decomposition}
Let us introduce the \emph{elementary} modules $\El(\varphi,(R_\varphi,\nabla))$, which are the finite dimensional $\CC\lpr x\rpr$-vector spaces with connection obtained as follows:
\begin{itemize}
\item
$\varphi\in\CC\lpr x_p\rpr/\CC\lcr x_p\rcr$ is nonzero (and its pole order is denoted by \hbox{$q(\varphi)\in\NN^*$}), where $x_p$ is a new variable, and if we regard $\rho_p=x_p^p$ as a ramified covering $x_p\mto x=x_p^p$ of the formal disc, $\varphi$ cannot be defined on a proper sub-ramified covering of $\rho_p$ (so the ramification order $p$ is denoted $p(\varphi)$);
\item
$(R_\varphi,\nabla)$ is a finite dimensional $\CC\lpr x_p\rpr$-vector space with a regular connection $\nabla$;
\item
$\El(\varphi,(R_\varphi,\nabla))$ is the push-forward (in the sense of vector spaces with meromorphic connection) of the twisted module $(R_\varphi,\nabla+\rd\varphi)$ by $\rho_{p(\varphi)}$.
\end{itemize}
The following is known as the Levelt-Turrittin theorem (\cf\cite[p\ptbl51]{Malgrange91}, see also \cite{Bibi07a}).
\begin{itemize}
\item
Each $\wh M_{x_i}$ decomposes as $\wh M_{x_i}^\reg\oplus\wh M_{x_i}^\irr$.
\item
Each $\wh M_{x_i}^\irr$ is a finite dimensional $\CC\lpr x\rpr$-vector space with connection, which decomposes as a direct sum of elementary modules $\El(\varphi,(R_\varphi,\nabla))$ where $\varphi$ and $(R_\varphi,\nabla)$ are as above; moreover, the decomposition is unique if we impose that, for $\varphi\neq\eta$ with $p(\varphi)=p(\eta)=p$ such that $R_\varphi,R_\eta\neq0$, there exists no $\zeta$ with $\zeta^p=1$ and $\eta(x_p)=\varphi(\zeta x_p)$. The \emph{slope} of $\El(\varphi,(R_\varphi,\nabla))$ is $q(\varphi)/p(\varphi)$. The set of $\varphi\neq0$ such that $R_\varphi\neq0$ is denoted by $\Phi_i\setminus\{0\}$.
\end{itemize}

On the other hand, a $\CC\lpr x\rpr$-vector space with regular connection $(R,\nabla)$ is canonically equipped with a decreasing filtration $V^a$ (\resp $V^{>a}$) indexed by $a\in\RR$, where~$V^a$ is the free $\CC\lcr x\rcr$-module on which the residue of $\nabla$ has eigenvalues $\alpha$ with real part in $[a,a+1)$ (\resp $(a,a+\nobreak1]$). The $\alpha$-eigenspace of the residue on $V^a/V^{a+1}$ is denoted by $\psi_\lambda(R)$, with $\lambda=\exp(-\twopii\alpha)$, and the corresponding nilpotent part of the residue is denoted by $\rN$. Giving $(R,\nabla)$ is then equivalent to giving the finite family of $\CC$-vector spaces $\psi_\lambda(R)$ ($\lambda\in\CC^*$), called the \emph{moderate nearby cycle spaces}, equipped with a nilpotent endomorphism~$\rN$.

Then $\wh M_{x_i}^\irr$ is completely determined by the finite family $\big(\varphi,\psi_\lambda(R_\varphi,\nabla),\rN\big)$ (\hbox{$\varphi\in\Phi_i\setminus\{0\}$}, $\lambda\in\CC^*$), that we also denote\footnote{\label{footnote:1}The notation $\psi_{t-x_i,\lambda}^\varphi(M),\phi_{t-x_i,\lambda}^\varphi(M)$, where we indicate the function which vanishes at $x_i$, would be more appropriate and is the standard notation. Since the coordinate $t$ is fixed, we will use the shortcuts $\psi_{x_i,\lambda}^\varphi(M)$ and $\phi_{x_i,\lambda}^\varphi(M)$.} by $\big(\psi_{x_i,\lambda}^\varphi(M),\rN\big)$.

Considering the regular part $\wh M_{x_i}^\reg$, we now denote by $\big(\psi_{x_i,\lambda}^0(M),\rN\big)$ (\ie $\varphi=0$ and $p(0)=1$, $q(0)=0$) the corresponding nearby cycle space with nilpotent operator, and we conclude that $\CC\lpr x\rpr\otimes_{\CC\lcr x\rcr}\wh M_{x_i}$ is completely determined by the finite family $\big(\varphi,\psi_{x_i,\lambda}^\varphi(M),\rN\big)$ (\hbox{$\varphi\in\Phi_i$}, $\lambda\in\CC^*$), that we simply denote by $\big(\psi_{x_i,\lambda}^\varphi(M),\rN\big)$.

As a consequence, $\wh M_{x_{r+1}}$ is completely determined by $\big(\psi_{x_{r+1},\lambda}^\varphi(M),\rN\big)_{\varphi\in\Phi_{r+1},\lambda\in\CC^*}$. At finite distance, $\big(\psi_{x_i,\lambda}^\varphi(M),\rN\big)_{\varphi\in\Phi_i,\lambda\in\CC^*}$ only determines $\CC\lpr x\rpr\otimes_{\CC\lcr x\rcr}\wh M_{x_i}$ \emph{a~priori}. However, if we assume that $M$ is a \emph{minimal (or intermediate) extension} at $x_i$, that is, has neither a sub nor a quotient module supported at $x_i$, then the family $\big(\psi_{x_i,\lambda}(M),\rN\big)_{\varphi\in\Phi_i,\lambda\in\CC^*}$ does determine $\wh M_{x_i}$. It is however useful to emphasize then the $\CC$-vector space with nilpotent endomorphism of \emph{moderate vanishing cycles} for the eigenvalue one of monodromy,
\[
\big(\phi_{x_i,1}(M),\rN\big):=\Big(\text{image}\big[\rN:\psi_{x_i,1}(M)\to\psi_{x_i,1}(M)\big],\text{induced }\rN\Big).
\]

In order to insure the minimality property, we will use the following criterion:

\begin{lemme}\label{lem:irredmini}
Assume that $M$ is an irreducible (or semi-simple) holonomic $\Clt$-module. Then $M$ is a minimal extension at each of its singular points.\qed
\end{lemme}

\begin{assumption}\label{ass:irrednotO}\mbox{}
\begin{enumerate}
\item\label{ass:irrednotO1}
In what follows, we always assume that $M$ is irreducible, not isomorphic to any $(\CC[t],\rd+c\,\rd t)$ ($c\in\CC$) and is not supported on a point.
\item\label{ass:irrednotO2}
We will sometimes assume that $M$ has a \emph{regular singularity} at $x_{r+1}$ and that the monodromy of $\DR M$ around $x_{r+1}$ is scalar and $\neq\id$, that is, of the form $\lambda_o\id$ with $\lambda_o\neq1$ (with the notation below, this means that ${}^\varphi\nu_{x_{r+1},\lambda,\ell}(M)=0$ unless $\varphi=0$, $\lambda=\lambda_o$ and $\ell=0$).
\end{enumerate}
\end{assumption}

\subsubsection*{Local numerical data attached to a holonomic module}
The ``monodromy filtration'' attached to the nilpotent endomorphism $\rN$ allows one to define, for each $\ell\in\NN$, the space of primitive vectors $\rP_\ell\psi_{x_i,\lambda}^\varphi(M)$ ($\varphi\in\Phi_i$), whose dimension is the number of Jordan blocks of size $\ell+1$ for $\rN$ acting on $\psi_{x_i,\lambda}^\varphi(M)$. We define the \emph{nearby cycle local data} ($i=1,\dots,r+1$, $\lambda\in\CC^*$) as
\begin{equation}\label{eq:nuvarphi}
\left\{
\begin{aligned}
{}^\varphi\nu_{x_i,\lambda,\ell}(M)&=p(\varphi)\cdot\dim\rP_\ell\psi_{x_i,\lambda}^\varphi(M)\quad(\ell\geq0),\\
{}^\varphi\nu_{x_i,\lambda,\prim}(M)&=\sum_{\ell\geq0}{}^\varphi\nu_{x_i,\lambda,\ell}(M),\\
{}^\varphi\nu_{x_i, \lambda}(M)&=\dim\psi_{x_i,\lambda}^\varphi(M)=\sum_{\ell\geq0}(\ell+1){}^\varphi\nu_{x_i,\lambda,\ell}(M),
\end{aligned}\right.
\end{equation}
and we have
\begin{equation}\label{eq:nuM}
\begin{split}
h(M)&=h_{x_i}(M)\\
&=\sum_{\varphi\in\Phi_i}\sum_\lambda{}^\varphi\nu_{x_i, \lambda}(M)\quad\text{(generic rank of $M$, independent of $i$)}.
\end{split}
\end{equation}
On the other hand, we define the \emph{vanishing cycle local data} by setting
\begin{equation}\label{eq:munu}
{}^\varphi\mu_{x_i,\lambda,\ell}(M)={}^\varphi\nu_{x_i,\lambda,\ell}(M),
\end{equation}
except if $i\neq r+1$, $\varphi=0$ and $\lambda=1$, for which we consider the values for $\im\rN$ and we set
\begin{equation}\label{eq:muone}
\left\{\begin{aligned}
{}^0\mu_{x_i,1,\ell}(M)&={}^0\nu_{x_i,1,\ell+1},\\
{}^0\mu_{x_i,1,\prim}(M)&=\sum_{\ell\geq0}{}^0\mu_{x_i,1,\ell}(M)={}^0\nu_{x_i,1,\prim}(M)-{}^0\nu_{x_i,1,0},\\
{}^0\mu_{x_i,1}(M)&=\sum_{\ell\geq0}(\ell+1)\,{}^0\mu_{x_i,1,\ell}(M)\\
&=\sum_{\ell\geq0}\ell\,{}^0\nu_{x_i,1,\ell}(M)={}^0\nu_{x_i,1}-{}^0\nu_{x_i,1,\prim}(M).
\end{aligned}\right.
\end{equation}
We then set, for $\varphi\in\Phi_i$,
\begin{equation}\label{eq:muxi}
{}^\varphi\mu_{x_i}(M)=\sum_\lambda{}^\varphi\mu_{x_i, \lambda}(M),\quad\mu_{x_i}(M)=\sum_{\varphi\in\Phi_i}{}^\varphi\mu_{x_i}(M).
\end{equation}

\begin{definition}[Local numerical data]\label{def:locnumdata}
Let $M$ be an irreducible holonomic $\cD(\Afu)$-module with singular points $x_1,\dots,x_r,x_{r+1}=\infty$. The local numerical data of~$M$ consist~of
\begin{enumerate}
\item
the generic rank $h(M)$,
\item
the nearby cycle local data ${}^\varphi\nu_{x_{r+1},\lambda,\ell}(M)$ ($\lambda\in\CC^*$, $\ell\in\NN$),
\item
the vanishing cycle local data ${}^\varphi\mu_{x_i,\lambda,\ell}(M)$ ($i=1,\dots,r$, $\lambda\in\CC^*$, $\ell\in\NN$).
\end{enumerate}
\end{definition}

\begin{remarque}\label{rem:missingnu}
The formula for the missing $^0\nu_{x_i,1,\ell}(M)$'s, $i=1,\dots,r$, is:
\[
{}^0\nu_{x_i,1,\ell}(M)=\begin{cases}
{}^0\mu_{x_i,1,\ell-1}(M)&\text{if }\ell\geq1,\\
h(M)-\mu_{x_i}(M)-{}^0\mu_{x_i,1,\prim}(M)&\text{if }\ell=0.
\end{cases}
\]

As we will see below, the $\nu$'s and $h$ behave well when tensoring with a Kummer module, while the $\mu$'s behave well by Fourier transform. The behaviour under middle convolution by a Kummer module will therefore be more complicated.
\end{remarque}

\subsection{A quick review of middle convolution for holonomic modules on the affine line}\label{subsec:quickreview}

We review the notions and results introduced by Katz \cite{Katz96}, in the frame of holonomic $\cD$-modules (\cf \cite{Arinkin08}).

Let $\Afu$ be the affine line and let $M,N$ be holonomic $\cD(\Afu)$-modules. The \emph{external product} $M\boxtimes N$ is a holonomic $\cD(\Afu\times\Afu)$-module. The \emph{(internal) tensor product} $M\otimes^{\bL} N$ is the pull-back $\delta^+(M\boxtimes N)$ of the external product by the diagonal embedding $\delta:\Afu\hto\Afu\times\Afu$. It is an object of $\catD^\rb(\cD(\Afu))$. If $N$ is $\cO(\Afu)$-flat, then $M\otimes^{\bL} N=M\otimes N$ is a holonomic $\cD(\Afu)$-module.

\subsubsection*{Convolutions}
Consider the map $s:\Afu_x\times\Afu_y\to\Afu_t$ (where the index is the name of the corresponding variable) defined by $t=s(x,y)=x+y$. The (usual) $*$-convolution $M\star_*N$ of $M$ and $N$ is the object $s_+(M\boxtimes N)$ of $\catD^\rb(\Clt)$. The $*$-convolution is associative. On the other hand, the $!$-convolution is defined as the adjoint by duality of the $*$-convolution: $M\star_!N=\bD(\bD M\star_*\bD N)$, where $\bD$ is the duality functor $\catD^{\rb,\op}_{\hol}(\cD(\Afu))\to \catD^\rb_{\hol}(\cD(\Afu))$. It is also expressed as $s_\dag(M\boxtimes N)$, if~$s_\dag:=\bD s_+\bD$ denotes the adjoint by duality of $s_+$. Similarly, $\star_!$ is associative.

Let us choose a projectivization $\wt s:X\to\Afu_t$ of $s$, and let $j:\Afu_x\times\Afu_y\hto X$ denote the open inclusion. Since $\wt s$ naturally commutes with duality, we have $\wt s_\dag=\wt s_+$ and $s_\dag=\wt s_+\circ j_\dag$. Since there is a natural morphism $j_\dag\to j_+$ in $\catD^\rb_\hol(\cD_X)$, we get a functorial morphism $s_\dag(M\boxtimes N)\to s_+(M\boxtimes N)$, that is, $M\star_!N\to M\star_*N$, in $\catD^\rb_\hol(\cD(\Afu))$.

\subsubsection*{Convolutions and Fourier transform}
Let $\tau$ be the variable which is Fourier dual to $t$. The Fourier transform (with kernel $e^{-t\tau}$) of a $\Clt$-module $M$ is denoted by $\Fou M$. It is equal to the $\CC$-vector space $M$ on which $\Cltau$ acts as follows: $\tau$ acts as $\partial_t$ and $\partial_\tau$ acts as $-t$. If $\wh p:\Afu_t\times\Afu_\tau\to\Afu_\tau$ denotes the projection, it is also expressed as $\wh p_+(M[\tau]e^{-t\tau})=H^0\wh p_+(M[\tau]e^{-t\tau})$.

Due to the character property of the Fourier kernel, we have
\begin{equation}\label{eq:Fou*}
\Fou(M\star_*N)=\Fou M\otimes^{\bL}\Fou N.
\end{equation}

Recall (\cf \cite[pp\ptbl86\,\&\,224]{Malgrange91}) that Fourier transform is compatible with duality up to a sign $\iota:\tau\mto-\tau$, that is, $\bD\Fou M\simeq\iota^+\Fou\bD M$. It follows that
\begin{equation}\label{eq:Fou!}
\Fou(M\star_!N)=\bD(\bD\Fou M\otimes^{\bL}\bD\Fou N).
\end{equation}

Denote by $\delta_0=\Clt/\Clt\cdot t$ the Dirac (at $0$) $\cD(\Afu)$-module, which satisfies $\Fou\delta_0=\CC[\tau]$. Then, clearly, $\delta_0$ is a unit both for $\star_*$ and $\star_!$.

\subsubsection*{Middle convolution with $\catP$}
We introduce the full subcategory $\catP$ of $\Mod_\hol(\Clt)$ consisting of holonomic $\Clt$-modules $N$ such that $\Fou N$ and $\Fou\bD N$ (equivalently, $\bD \Fou N$) are $\CC[\tau]$-flat. Clearly, $\catP$ is a full subcategory of $\Mod_\hol(\Clt)$ which is stable by duality.

From \eqref{eq:Fou*} and \eqref{eq:Fou!} it follows that, for $N$ in $\catP$ and any holonomic $M$ (\resp for $M$ in $\catP$), both $M\star_*N$ and $M\star_!N$ are holonomic $\cD(\Afu)$-modules (\resp belong to $\catP$). Clearly, $\delta_0$ belongs to $\catP$.

\begin{definition}
For $N$ in $\catP$ and $M$ holonomic, the middle convolution $M\star_\Mid N$ is defined as the image of $M\star_!N\to M\star_*N$ in $\Mod_\hol(\cD(\Afu))$.
\end{definition}

\begin{lemme}\label{lem:starexact}
For $N\in\catP$, $\star_*N$ and $\star_!N$ are exact functors on $\Mod_\hol(\Clt)$. On the other hand, $\star_\Mid N$ preserves injective morphisms as well as surjective morphisms.
\end{lemme}

\begin{proof}
The first assertion follows from the exactness of $\Fou$ and $\bD$ on $\Mod_\hol(\Clt)$, and the exactness of $\otimes\Fou N$ on $\Mod_\hol(\Cltau)$. The second one is then obvious.
\end{proof}

\subsubsection*{Middle convolution with Kummer modules}
For $\chi\in\CC^*\setminus\{1\}$, choose $\alpha\in\CC\setminus\nobreak\ZZ$ such that $\exp(-\twopii\alpha)=\chi$ and let $L_\chi$ (or $L_\chi(t)$ to make precise the variable~$t$) denote the Kummer $\Clt$-module $(\CC[t,t^{-1}],\rd+\alpha\,\rd t/t)=\Clt/\Clt\cdot\nobreak(t\partial_t-\nobreak\alpha)$. It does not depend on the choice of $\alpha$ up to isomorphism. The second presentation makes it clear that $\bD L_\chi=L_{\chi^{-1}}$ and $\Fou(L_\chi(t))=L_{\chi^{-1}}(\tau)$. It~will also be convenient to set $L_1=\delta_0$. From \eqref{eq:Fou*} and \eqref{eq:Fou!} we conclude that $L_\chi$ belongs to $\catP$ and, for any of the three~$\star$-products,
\begin{equation}\label{eq:LLchi}
L_\chi\star L_{\chi'}=L_{\chi\chi'}\quad\text{if }\chi\chi'\neq1.
\end{equation}
We also deduce that $L_\chi\star_\Mid L_{\chi^{-1}}=L_1$. If $M$ is a holonomic $\Clt$-module, we set
\begin{equation}\label{eq:MCchi}
\MC_\chi(M):=M\star_\Mid L_\chi.
\end{equation}

\begin{proposition}\label{prop:Mchimin}
If $\chi\neq1$, $\Fou\MC_\chi(M)$ is the minimal extension at the origin of $\Fou M\otimes L_{\chi^{-1}}$.
\end{proposition}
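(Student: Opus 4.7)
The plan is to apply the Fourier transform $\Fou$ to the defining diagram
\[
M\star_!L_\chi\to M\star_\Mid L_\chi\to M\star_*L_\chi,
\]
identify each term on the $\tau$-side separately, and then recognize the resulting map as the canonical morphism $j_\dag j^+\to j_+j^+$ on $\Afu_\tau$, whose image is by definition the minimal extension at $0$. Since $\Fou$ is exact on $\Mod_\hol(\Clt)$ (and taking image commutes with exact functors), this will give the statement at once.

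For the $*$-side, \eqref{eq:Fou*} gives $\Fou(M\star_*L_\chi)=\Fou M\otimes^{\bL}\Fou L_\chi$; since $\chi\neq1$, the Kummer module $\Fou L_\chi=L_{\chi^{-1}}(\tau)$ is $\CC[\tau]$-flat, so the derived tensor collapses to an ordinary tensor $\Fou M\otimes L_{\chi^{-1}}$. For the $!$-side, \eqref{eq:Fou!} together with $\bD L_\chi=L_{\chi^{-1}}$ yields $\Fou(M\star_!L_\chi)=\bD(\bD\Fou M\otimes L_\chi(\tau))$, again an ordinary tensor after flatness.

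Next I would identify these two modules with the $*$- and $!$-extensions at the origin of their common restriction to $\Afu_\tau^*:=\Afu_\tau\setminus\{0\}$. The key observation is that for any holonomic $K$ on $\Afu_\tau$, the module $K\otimes L_{\chi^{-1}}$ is a module over $\CC[\tau,\tau^{-1}]$, hence coincides with its own localization at $0$; writing $j:\Afu_\tau^*\hto\Afu_\tau$, this means $K\otimes L_{\chi^{-1}}=j_+j^+(K\otimes L_{\chi^{-1}})$. Applying the same localization principle to $\bD\Fou M\otimes L_\chi$ and then dualizing, via the general identity $j_\dag=\bD j_+\bD$ and the fact that on $\Afu_\tau^*$ the dual of a rank-one local system $L_\chi$ is $L_{\chi^{-1}}$, identifies $\Fou(M\star_!L_\chi)$ with $j_\dag j^+(\Fou M\otimes L_{\chi^{-1}})$.

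Combining these identifications, $\Fou\MC_\chi(M)$ becomes the image of the canonical morphism
\[
j_\dag j^+(\Fou M\otimes L_{\chi^{-1}})\to j_+j^+(\Fou M\otimes L_{\chi^{-1}}),
\]
which is precisely the minimal extension at the origin of $\Fou M\otimes L_{\chi^{-1}}$. The main obstacle I anticipate is the bookkeeping required to check that the Fourier image of the functorial morphism $M\star_!L_\chi\to M\star_*L_\chi$ really coincides with the canonical morphism $j_\dag\to j_+$: this is a compatibility between the natural transformation $s_\dag\to s_+$ underlying the definition of middle convolution and the duality identities implicit in \eqref{eq:Fou*}--\eqref{eq:Fou!}, and is the only step that is not a formal consequence of the explicit computations above.
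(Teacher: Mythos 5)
Your object-level identifications are exactly the ones the paper uses: by exactness of $\Fou$ and by \eqref{eq:Fou*}--\eqref{eq:Fou!}, one has $\Fou(M\star_*L_\chi)=\Fou M\otimes L_{\chi^{-1}}$ and $\Fou(M\star_!L_\chi)=\bD(\bD\Fou M\otimes L_\chi)$, and these are indeed isomorphic, as objects, to $j_+j^+P$ and $j_\dag j^+P$ for $P=\Fou M\otimes L_{\chi^{-1}}$ and $j:\Afu_\tau\setminus\{0\}\hto\Afu_\tau$. The gap is the step you set aside as ``bookkeeping'': knowing the two objects does not tell you what the Fourier transform of the functorial morphism $M\star_!L_\chi\to M\star_*L_\chi$ is under these identifications, and your conclusion needs precisely that this morphism restricts to an isomorphism over $\Afu_\tau\setminus\{0\}$ (equivalently, that the kernel and cokernel of $M\star_!L_\chi\to M\star_*L_\chi$ are constant modules, i.e.\ have Fourier transform supported at $\tau=0$). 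A priori the image could have strictly smaller generic rank, or could differ from $P$ at a singular point $y_j\neq0$ of $\Fou M$; ruling this out is the actual mathematical content of the proposition, not a formal compatibility of natural transformations, and as written your argument does not address it.

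The paper's proof is structured precisely to avoid identifying the transformed morphism. It only uses the two inclusions you already have: $\Fou\MC_\chi(M)$ is a submodule of $\Fou M\otimes L_{\chi^{-1}}$, which is a $\CC[\tau,\tau^{-1}]$-module and hence has no submodule supported at the origin; and it is a quotient of $\bD(\bD\Fou M\otimes L_\chi)$, whose dual $\bD\Fou M\otimes L_\chi$ likewise has no submodule supported at the origin, so that $\Fou\MC_\chi(M)$ has no quotient supported at the origin either. After these two soft observations, all that remains is to check that $\Fou\MC_\chi(M)$ and $\Fou M\otimes L_{\chi^{-1}}$ agree after inverting $\tau$, which the paper reduces to a generic rank computation carried out on a Zariski open set avoiding the singularities of $\Fou M$ and of $L_\chi(\tau)$, where $\cD$-module duality coincides with duality of bundles with connection, so that $\bD(\bD\Fou M\otimes L_\chi)$ has the same rank there as $\Fou M\otimes L_{\chi^{-1}}$. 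If you want to keep your framing, the cheapest repair is to import exactly this: deduce the absence of sub/quotients at $0$ from the two inclusions as above, and replace the morphism identification by the rank (or constancy of kernel/cokernel) argument; otherwise you must genuinely prove the compatibility between $s_\dag\to s_+$ and the duality isomorphisms \eqref{eq:Fou*}--\eqref{eq:Fou!}, which is a nontrivial diagram chase rather than bookkeeping.
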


\begin{proof}
By construction, $\Fou\MC_\chi(M)\subset\Fou M\otimes L_{\chi^{-1}}$, hence it has no submodule supported at the origin. Since $\bD\Fou L_\chi=L_\chi$, $\bD\Fou M\otimes\bD\Fou L_\chi$ satisfies the same property, and thus its dual module has no quotient module supported at the origin. As a consequence, $\Fou\MC_\chi(M)$ does not have any quotient module supported at the origin. It remains therefore to show that $\Fou\MC_\chi(M)\otimes\CC[\tau,\tau^{-1}]=\Fou M\otimes L_{\chi^{-1}}$, \ie $\Fou\MC_\chi(M)$ and $\Fou M\otimes L_{\chi^{-1}}$ have the same generic rank. We will restrict to a non-empty Zariski open subset not containing the singularities of $\Fou M$ and $L_\chi(\tau)$. It is then a matter of showing that, on this open subset, $\bD(\bD\Fou M\otimes L_\chi(\tau))$ has the same rank as $\Fou M$ (or $\Fou M\otimes L_{\chi^{-1}}(\tau)$). On such an open set, the dual as a $\cD$-module coincides with the dual as a bundle with connection, so the assertion is clear.
\end{proof}

\begin{proposition}\label{prop:starmidcomm}
The middle convolution functor $\MC_\chi:\Mod_\hol(\Clt)\to\Mod_\hol(\Clt)$ satisfies $\MC_{\chi\chi'}\simeq\MC_{\chi'}\circ\MC_\chi$ if $\chi\chi'\neq1$ and $\MC_1=\id$. It~satisfies $\MC_{\chi^{-1}}\circ\MC_\chi=\id$ on non-constant irreducible holonomic modules. In particular, it sends non-constant irreducible holonomic modules to non-constant irreducible holonomic modules.
\end{proposition}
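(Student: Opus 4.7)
The plan is to transport everything through Fourier transform via Proposition~\ref{prop:Mchimin}: the middle convolution $\MC_\chi$ corresponds (for $\chi\neq1$) to tensoring with the Kummer module $L_{\chi^{-1}}$ and then taking the minimal extension at the origin; denote this last operation by $\Xi$. The trivial case $\MC_1=\id$ follows at once because $L_1=\delta_0$ is a two-sided unit for $\star_*$ and $\star_!$, the natural morphism $M\star_!\delta_0\to M\star_*\delta_0$ being the identity of~$M$.

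The key preliminary observation is that for $\mu\neq1$, $L_\mu$ has underlying $\cO$-module $\CC[t,t^{-1}]$, so for any holonomic~$P$ the tensor $P\otimes L_\mu$ factors through the localization $P[*0]$; in particular, any subquotient of $P$ supported at~$0$ is killed by $\otimes L_\mu$. Since $P$ and $\Xi(P)$ differ only by such subquotients, we get $P\otimes L_\mu\simeq\Xi(P)\otimes L_\mu$, and hence
\[
\Xi(P\otimes L_\mu)=\Xi\bigl(\Xi(P)\otimes L_\mu\bigr).
\]
Applying Proposition~\ref{prop:Mchimin} twice, first to~$M$ and then to $\MC_\chi(M)$, and using $L_{\chi^{-1}}\otimes L_{\chi'^{-1}}\simeq L_{(\chi\chi')^{-1}}$ when $\chi\chi'\neq1$, we obtain
\[
\Fou\bigl(\MC_{\chi'}\circ\MC_\chi(M)\bigr)=\Xi(\Fou M\otimes L_{(\chi\chi')^{-1}})=\Fou\MC_{\chi\chi'}(M),
\]
which is the associativity statement.

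For $\MC_{\chi^{-1}}\circ\MC_\chi=\id$ on non-constant irreducible~$M$, the same computation with $\chi'=\chi^{-1}$ yields
\[
\Fou\bigl(\MC_{\chi^{-1}}\circ\MC_\chi(M)\bigr)=\Xi(\Fou M\otimes L_{\chi^{-1}}\otimes L_\chi)=\Xi(\Fou M[*0]),
\]
since $L_{\chi^{-1}}\otimes L_\chi=\CC[t,t^{-1}]$ with trivial connection. Now Fourier transform preserves irreducibility, so $\Fou M$ is irreducible, and by Lemma~\ref{lem:irredmini} it is a minimal extension at each of its singular points (and trivially so if~$0$ is a smooth point of~$\Fou M$). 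Hence $\Xi(\Fou M[*0])=\Fou M$, and the identity follows.

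Finally, $\MC_\chi$ preserves non-constant irreducibility. First, $\Fou\MC_\chi(M)=\Xi(\Fou M\otimes L_{\chi^{-1}})$ is irreducible: on $\Afu\setminus\{0\}$ it equals the irreducible twist $\Fou M\otimes L_{\chi^{-1}}$, so any proper submodule of the minimal extension at~$0$ would have to be supported at~$0$ (excluded by definition) or have full generic rank (hence equal the whole). Second, the condition that~$M$ is not isomorphic to $(\CC[t],\rd+c\,\rd t)$ ensures $h(\Fou M)>0$, so $\Fou\MC_\chi(M)$ has positive generic rank and is not a delta, i.e., $\MC_\chi(M)$ is not isomorphic to any $(\CC[t],\rd+c\,\rd t)$. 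The main subtlety throughout is the interplay between minimal extension at~$0$ and tensoring with Kummer modules, handled by the preliminary observation above.
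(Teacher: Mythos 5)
Your proof is correct, but it takes a genuinely different route from the paper's. You transport the whole statement through the Fourier transform: by Proposition~\ref{prop:Mchimin}, $\MC_\chi$ becomes ``tensor with $L_{\chi^{-1}}$, then take the minimal extension at $\tau=0$'', associativity reduces to the observation that tensoring with a Kummer module factors through localization and so kills the subquotients supported at the origin introduced by the intermediate minimal extension, and the inverse property becomes $\Xi(\Fou M[*0])=\Fou M$. The paper is aware of this argument --- the footnote in its proof says precisely that for $\chi\chi'\neq1$ one may compare the localizations at the origin of the Fourier transforms of both sides --- but deliberately avoids it: the paper proves associativity by a purely convolution-theoretic diagram chase (Lemma~\ref{lem:starexact} together with the $3\times3$ diagram identifying $(M\star_\Mid L_\chi)\star_\Mid L_{\chi'}$ with the image of $M\star_!L_\chi\star_!L_{\chi'}\to M\star_*L_\chi\star_*L_{\chi'}$), and treats $\chi'=\chi^{-1}$ by showing $M\to M\star_*L_\chi\star_*L_{\chi^{-1}}$ is injective together with its dual statement, Fourier entering only to check the absence of $\tau$-torsion. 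What the paper's route buys is reusability: the same categorical argument is repeated verbatim in the category of (mixed) Hodge modules in the proof of Proposition~\ref{prop:MCchi1}, where no Fourier transform is available; your route buys brevity and directness, but could not be recycled there.

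Two points to tighten. (i) Say explicitly where non-constancy enters: it guarantees that $\Fou M$ has no $\tau$-torsion (is not punctual at the origin), which is what gives $\Fou M\hookrightarrow\Fou M[*0]$ and $\Xi(\Fou M[*0])=\Fou M$; Lemma~\ref{lem:irredmini} by itself would not exclude $\Fou M\simeq\delta_0$. (ii) In your last paragraph you invoke ``$M$ not isomorphic to $(\CC[t],\rd+c\,\rd t)$'', which is Assumption~\ref{ass:irrednotO}\eqref{ass:irrednotO1} and is stronger than the proposition's hypothesis; for the stated claim it suffices to note that $\Fou\MC_\chi(M)$, being the minimal extension at the origin of a nonzero localized module, has no submodule supported at $0$, hence cannot be $\delta_0$, so $\MC_\chi(M)$ is not the constant module.
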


\begin{proof}
That $\MC_1=\id$ is clear by Fourier transform and can also be seen as follows: since $M\boxtimes L_1$ is nothing but the push-forward $i_+M$ by the inclusion $i:\Afu_x\times\{0\}\hto\Afu_x\times\Afu_y$, we have $s_!(M\boxtimes L_1)=s_+(M\boxtimes L_1)=M$ since $s\circ i=\id$; then the image of one into the other is also equal to~$M$.

We are reduced to proving an associativity property\footnote{We present a proof not relying on the Fourier transform for further use in the proof of Theorem \ref{th:MCchi}. Using Fourier transform, one can argue as follows if $\chi\chi'\neq1$. By Proposition \ref{prop:Mchimin}, it is enough to check the equality of the localization at the origin of the Fourier transforms of both terms, which is then straightforward.}. Indeed,
\begin{align*}
\MC_{\chi\chi'}(M)&=M\star_\Mid(L_\chi\star_\Mid L_{\chi'}),
\\
\MC_{\chi'}\circ\MC_\chi(M)&=(M\star_\Mid L_\chi)\star_\Mid L_{\chi'}.
\end{align*}

On the one hand, let us consider the following diagram:
\[
\xymatrix{
M\star_!L_\chi\star_!L_{\chi'}\ar@{->>}[r]\ar@{->>}[d]&(M\star_\Mid L_\chi)\star_!L_{\chi'}\ar@{->>}[d]\ar@{^{ (}->}[r]&(M\star_*L_\chi)\star_!L_{\chi'}\ar@{->>}[d]\\
(M\star_!L_\chi)\star_\Mid L_{\chi'}\ar@{->>}[r]\ar@{^{ (}->}[d]&(M\star_\Mid L_\chi)\star_\Mid L_{\chi'}\ar@{^{ (}->}[r]\ar@{^{ (}->}[d]&(M\star_*L_\chi)\star_\Mid L_{\chi'}\ar@{^{ (}->}[d]\\
(M\star_!L_\chi)\star_*L_{\chi'}\ar@{->>}[r]&(M\star_\Mid L_\chi)\star_*L_{\chi'}\ar@{^{ (}->}[r]&M\star_*L_\chi\star_*L_{\chi'}
}
\]
Lemma \ref{lem:starexact} shows that the horizontal arrows have the indicated surjectivity/injectivity property. The vertical arrows of the first line are onto and those of the second line are injective by definition of $\star_\Mid$. It follows that $(M\star_\Mid L_\chi)\star_\Mid L_{\chi'}$ is also identified with the image of $M\star_!L_\chi\star_!L_{\chi'}\to M\star_*L_\chi\star_*L_{\chi'}$.

On the other hand, if $\chi\chi'\neq1$, $M\star_\Mid(L_\chi\star_\Mid L_{\chi'})=M\star_\Mid(L_\chi\star L_{\chi'})$ (any~$\star$), so
\[
M\star_\Mid(L_\chi\star L_{\chi'})\\
=\text{image}[M\star_!(L_\chi\star_! L_{\chi'})\to M\star_*(L_\chi\star_* L_{\chi'})].
\]
If $\chi'=\chi^{-1}$, we consider the diagram
\[
\xymatrix{
M\star_!(L_\chi\star_!L_{\chi^{-1}})\ar[rr]\ar[d]&&M\star_*(L_\chi\star_*L_{\chi^{-1}})\\
M\star_!L_1\ar@{=}[r]&M\star_\Mid L_1\ar@{=}[r]&M\star_*L_1\ar[u]
}
\]
where in the lower line, all terms are nothing but $M$, and we are reduced to proving that the left vertical morphism is onto and the right one in injective if~$M$ is irreducible and non-constant. Since $\Fou(L_\chi\star_*L_{\chi^{-1}})=\CC[\tau,\tau^{-1}]$ and~$\Fou M$ and $\Fou\bD M$ have no $\tau$-torsion by our assumption, the morphism $M\to M\star_*L_\chi\star_*L_{\chi^{-1}}$ is injective and so is $\bD M\to\bD M\star_*L_\chi\star_*L_{\chi^{-1}}$, hence by duality $M\star_!L_\chi\star_!L_{\chi^{-1}}\to M$ is onto.
\end{proof}

Let us decompose $\Afu_x\times\Afu_y$ as $\Afu_x\times\Afu_t$ so that $s$ becomes the second projection~$p$, and let us choose $\wt s:\PP^1_x\times\Afu_t\to\Afu_t$ as being the second projection. Since $j:\nobreak\Afu_x\times\nobreak\Afu_t\hto\PP^1_x\times\Afu_t$ is affine, $j_\dag(M\boxtimes L_\chi),j_+(M\boxtimes L_\chi),j_{\dag+}(M\boxtimes L_\chi)$ are holonomic $\cD_{\PP^1_x\times\Afu_t}$-modules, as well as the kernel $K$ of $j_\dag(M\boxtimes L_\chi)\to j_{\dag+}(M\boxtimes L_\chi)$ and the cokernel~$C$ of $j_{\dag+}(M\boxtimes L_\chi)\to j_+(M\boxtimes L_\chi)$, and we denote the middle extension $j_{\dag+}(M\boxtimes L_\chi)$ by~$\ccM_\chi$. If $M$ is irreducible, $\ccM_\chi$ is irreducible since $M\boxtimes L_\chi$ is an irreducible $\cD_{\Afu_x\times\Afu_t}$-module.

\begin{proposition}\label{prop:MCchiconvolution}
We have $\MC_\chi(M)=\wt s_+\ccM_\chi$.
\end{proposition}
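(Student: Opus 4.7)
The plan is to obtain $\MC_\chi(M)$ and $\wt s_+\ccM_\chi$ simultaneously from the two tautological short exact sequences
\[
0\to K\to j_\dag(M\boxtimes L_\chi)\to \ccM_\chi\to 0,\qquad
0\to \ccM_\chi\to j_+(M\boxtimes L_\chi)\to C\to 0
\]
of holonomic $\cD_{\PP^1_x\times\Afu_t}$-modules introduced just before the statement, by pushing them forward along $\wt s$.

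First I would record that $\wt s$ is proper (hence $\wt s_+=\wt s_\dag$), which gives $\wt s_+\circ j_\dag=s_\dag$ and $\wt s_+\circ j_+=s_+$. Since $L_\chi\in\catP$, the text shows that $s_\dag(M\boxtimes L_\chi)=M\star_! L_\chi$ and $s_+(M\boxtimes L_\chi)=M\star_* L_\chi$ are actually holonomic $\cD(\Afu_t)$-modules, \ie concentrated in cohomological degree $0$. Next I would observe that $K$ and $C$ are both supported on $\{\infty\}\times\Afu_t$; writing $K=i_{\infty+}K_0$ and $C=i_{\infty+}C_0$ for holonomic $\cD(\Afu_t)$-modules $K_0,C_0$, and noting that $\wt s\circ i_\infty$ is the identity of $\Afu_t$, I conclude that $\wt s_+K=K_0$ and $\wt s_+C=C_0$ are also concentrated in degree $0$.

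The core step is then the long exact cohomology sequences obtained by applying $\wt s_+$ to the two short exact sequences. From the first one, vanishing of $H^i\wt s_+K$ and $H^i\wt s_+j_\dag(M\boxtimes L_\chi)$ for $i\neq0$ forces $H^i\wt s_+\ccM_\chi=0$ for $i\geq1$ and yields
\[
0\to H^{-1}\wt s_+\ccM_\chi\to K_0\to M\star_!L_\chi\to H^0\wt s_+\ccM_\chi\to 0.
\]
From the second, the same kind of vanishing forces $H^{-1}\wt s_+\ccM_\chi=0$ (hence $K_0\hto M\star_!L_\chi$ and $H^0\wt s_+\ccM_\chi=\coker(K_0\to M\star_!L_\chi)$) and produces
\[
0\to H^0\wt s_+\ccM_\chi\to M\star_*L_\chi\to C_0\to 0.
\]
Combining these, $\wt s_+\ccM_\chi$ is concentrated in degree $0$, and $H^0\wt s_+\ccM_\chi$ is realized as a quotient of $M\star_!L_\chi$ that injects into $M\star_*L_\chi$.

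The last thing to check is that the resulting composition $M\star_!L_\chi\twoheadrightarrow \wt s_+\ccM_\chi\hto M\star_*L_\chi$ coincides with the canonical morphism $M\star_!L_\chi\to M\star_*L_\chi$ used in the definition of $\MC_\chi$. This is a matter of naturality: the canonical morphism $j_\dag(M\boxtimes L_\chi)\to j_+(M\boxtimes L_\chi)$ factors through $\ccM_\chi$ by the very definition of the middle extension, and the factorization is preserved by the exact functor $H^0\wt s_+$ on the terms at hand. Identifying the image thus yields $\wt s_+\ccM_\chi=H^0\wt s_+\ccM_\chi=\MC_\chi(M)$. I do not expect a real obstacle here; the only point that requires a little care is systematically tracking that the cohomologies of $\wt s_+$ applied to the auxiliary objects $K$ and $C$ genuinely vanish outside degree $0$, which is what makes the two long exact sequences collapse to the desired short exact ones.
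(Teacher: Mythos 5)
Your proof is correct and follows essentially the same route as the paper's: both use that $L_\chi\in\catP$ forces $s_\dag(M\boxtimes L_\chi)$ and $s_+(M\boxtimes L_\chi)$ to be concentrated in degree zero, and that $K$ and $C$ are supported on $\{x_{r+1}\}\times\Afu_t$, where $\wt s$ is the identity, so that the long exact sequences for $\wt s_+$ collapse and yield the surjectivity onto, and injectivity from, $H^0\wt s_+\ccM_\chi$. Your explicit writing-out of the two long exact sequences and of the factorization of the canonical morphism through $\ccM_\chi$ is precisely what the paper's terser argument leaves implicit.
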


\begin{proof}
Since $L_\chi$ belongs to $\catP$, we have $s_\dag(M\boxtimes L_\chi)=H^0s_\dag(M\boxtimes L_\chi)$ and $s_+(M\boxtimes\nobreak L_\chi)=H^0s_+(M\boxtimes L_\chi)$, so that
\begin{align*}
\MC_\chi(M)&=\text{image}\Big[H^0s_\dag(M\boxtimes L_\chi)\to H^0s_+(M\boxtimes L_\chi)\Big]\\
&=\text{image}\Big[H^0\wt s_+j_\dag(M\boxtimes L_\chi)\to H^0\wt s_+j_+(M\boxtimes L_\chi)\Big].
\end{align*}
On the other hand, since $\wt s$ is the identity when restricted to $\{x_{r+1}\}\times\Afu_t$, both $\wt s_+K$ and $\wt s_+C$ have cohomology in degree zero only. It follows that $H^0\wt s_+j_\dag(M\boxtimes L_\chi)\to H^0\wt s_+j_{\dag+}(M\boxtimes L_\chi)$ is onto and $H^0\wt s_+j_{\dag+}(M\boxtimes L_\chi)\to H^0\wt s_+j_+(M\boxtimes L_\chi)$ is injective, which proves the equality $\MC_\chi(M)=H^0\wt s_+\ccM_\chi$. That $H^k\wt s_+\ccM_\chi=0$ for $k\neq0$ is proved similarly.
\end{proof}

\subsection{Behaviour of local data by various operations}
\subsubsection*{Twist by a rank-one local system}
Let us fix a singular point $x_i$ of $M$ with local coordinate $x$ as above, and let us focus on local data at $x_i$. For $\lambda_i\in\CC^*\setminus\{1\}$, let $\wh L_{x_i,\lambda_i}$ denote the $\CC\lpr x\rpr$-vector space $\CC\lpr x\rpr$ equipped with the connection $\rd+\nobreak\alpha_i\,\rd x/x$, where $\alpha_i$ is chosen so that $\exp(-\twopii\alpha_i)=\lambda_i$. A standard computation (see a more precise computation in \eqref{eq:FpVL}) shows that, for each $\varphi\in\Phi_i$ and each $\lambda\in\CC^*$, we have
\[
\big(\psi_{\lambda}^\varphi(\wh M_{x_i}\otimes\wh L_{x_i,\lambda_i}),\rN\big)\simeq\big(\psi_{\lambda/\lambda_i^{p(\varphi)}}^\varphi(\wh M_{x_i}),\rN\big).
\]
We thus get
\[
^\varphi\nu_{\lambda,\ell}((\wh M_{x_i}\otimes\wh L_{x_i,\lambda_i})_{\min})={}^\varphi\nu_{\lambda,\ell}(\wh M_{x_i}\otimes\wh L_{x_i,\lambda_i})={}^\varphi\nu_{\lambda/\lambda_i^{p(\varphi)},\ell}(\wh M_{x_i})
\]
The formulas \eqref{eq:muone} also allow one to compute the missing local data $\mu_\bbullet$ of the minimal extension at $x_i$ of $\wh M_{x_i}\otimes\wh L_{x_i,\lambda_i}$ from ${}^0\nu_{x_i,1/\lambda_i,\ell}(\wh M_{x_i})$:
\[
{}^0\mu_{1,\ell}((\wh M_{x_i}\otimes\wh L_{x_i,\lambda_i})_{\min})={}^0\nu_{1/\lambda_i,\ell+1}(\wh M_{x_i}).
\]
As a conclusion:
\begin{align}\label{eq:nutwist}
h((\wh M_{x_i}\otimes\wh L_{x_i,\lambda_i})_{\min})&=h(\wh M_{x_i}),\\\label{eq:nulambdatwist}
{}^\varphi\nu_{\lambda,\ell}(\wh M_{x_{r+1}}\otimes\wh L_{x_{r+1},\lambda_{r+1}})&={}^\varphi\nu_{\lambda/\lambda_{r+1}^{p(\varphi)},\ell}(\wh M_{x_{r+1}})\\\label{eq:mutwist}
{}^\varphi\mu_{\lambda,\ell}((\wh M_{x_i}\otimes\wh L_{x_i,\lambda_i})_{\min})&\qquad(i=1,\dots,r)\\\notag
&\hspace*{-3.5cm}=\begin{cases}
{}^\varphi\mu_{\lambda/\lambda_i^{p(\varphi)},\ell}(\wh M_{x_i})&\text{if }\varphi\neq0\text{ or }\lambda\neq1,\lambda_i,\\
{}^0\mu_{1/\lambda_i,\ell+1}(\wh M_{x_i})&\text{if }\varphi=0\text{ and }\lambda=1,\\
{}^0\mu_{1,\ell-1}(\wh M_{x_i})&\text{if }\varphi=0,\;\lambda=\lambda_i\text{ and }\ell\geq1,\\
h(\wh M_{x_i})-\mu(\wh M_{x_i})-{}^0\mu_{1,\prim}(\wh M_{x_i})&\text{if }\varphi=0,\;\lambda=\lambda_i\text{ and }\ell=0.
\end{cases}
\end{align}

A straightforward computation gives then ($i=1,\dots,r$)
\begin{equation}\label{eq:mutwistmoinsmu}
\mu((\wh M_{x_i}\otimes\wh L_{x_i,\lambda_i})_{\min})=h(\wh M_{x_i})-{}^0\mu_{1/\lambda_i,\prim}(\wh M_{x_i}).
\end{equation}

\subsubsection*{Behaviour of local data by Fourier transform}
The main tool will be here the formal stationary phase formula. Let $M$ be an irreducible holonomic $\Clt$-module and let $\Fou M$ be its Fourier transform, which is also irreducible. Both are therefore minimal extensions at their singular points at finite distance. We denote by $\bmy=\{y_1,\dots,y_s\}$ the set of singular points at finite distance of $\Fou M$ and by $y_{s+1}=\wh\infty$ the point at infinity (in the variable $\tau$), which is also considered as a singular point of~$\Fou M$. It will be convenient to denote by $i_o$ (\resp $j_o$) the index $i\in\{1,\dots,r\}$ (\resp $j\in\{1,\dots,s\}$), if it exists, such that $x_{i_o}=0$ (\resp $y_{j_o}=0$).

Recall (\cf \cite{Laumon87} in the $\ell$-adic setting, and \cite{B-E04,Garcia04} for the $\cD$-module setting) that, for each $j=1,\dots,s+1$, $\wh{\Fou M}_{y_j}$ is obtained by local Fourier transform according to the formulas
\begin{align*}
\wh{\Fou M}_{y_j}&=\ccF^{(\infty,y_j)}(\wh M_\infty)\quad\text{for }j=1,\dots s,\\
\wh{\Fou M}_{\wh\infty}&=\bigoplus_{i=1}^{r+1}\ccF^{(x_i,\wh\infty)}(\wh M_{x_i}).
\end{align*}
The formal stationary phase formula (\cf \cite{Bibi07a,Fang07}) gives decompositions $\Phi_{r+1}(M)=\coprod_{j=1}^{s+1}\Phi_{r+1}^{(j)}(M)$ and $\Phi_{s+1}(\Fou M)=\coprod_{i=1}^{r+1}\Phi_{s+1}^{(i)}(\Fou M)$, and isomorphisms, that we simply denote by $\varphi\mto\wh\varphi$, and that we complement with the behaviour of the ramification order (recall that $q(\varphi)={}$order of the pole of~$\varphi$, $p(\varphi)={}$ramification order of~$\varphi$, \cf\S\ref{subsec:locnumdatahol}),
\begin{equation}\label{eq:PhiwhPhip}
\begin{aligned}
\Phi_{r+1}^{(j)}&\isom\Phi_j(\Fou M)\quad j=1,\dots,\wh{j_o},\dots,s,\\
\Phi_{r+1}^{(j_o)}&\isom\Phi_{j_o}(\Fou M),\quad p(\wh\varphi)=p(\varphi)-q(\varphi)\\
\Phi_i(M)&\isom\Phi_{s+1}^{(i)}(\Fou M),\quad p(\wh\varphi)=p(\varphi)+q(\varphi),\qquad i=1,\dots,r,\\
\Phi_{r+1}^{(s+1)}&\isom\Phi_{s+1}^{(r+1)}(\Fou M),\quad p(\wh\varphi)=q(\varphi)-p(\varphi).
\end{aligned}
\end{equation}
More precisely, $\Phi_{r+1}^{(s+1)}$ is the part with slope $>1$ in $\Phi_{r+1}$ ($\text{slope}(\varphi)=q(\varphi)/p(\varphi)$) and the image of $\Phi_{r+1}^{(s+1)}$ in $\Phi_{s+1}(\Fou M)$ is the part of slope~$>1$ in $\Phi_{s+1}(\Fou M)$, that is, $\Phi_{s+1}^{(r+1)}(\Fou M)$. Moreover, the union of the $\Phi_{s+1}^{(i)}(\Fou M)$'s for $i\in\{1,\dots,r\}\setminus\{i_o\}$ is the part of slope one, and $\Phi_{s+1}^{(i_o)}(\Fou M)$ (if nonempty) is the part of slope $<1$ in $\Phi_{s+1}(\Fou M)$. We have a similar description of $\Phi_{r+1}^{(j)}(M)$. We then obtain
\begin{align}
{}^{\wh\varphi_j}\mu_{y_j,\lambda,\ell}(\Fou M)&={}^{\varphi_j}\nu_{x_{r+1},\pm\lambda,\ell}(M),\quad\varphi_j\in\Phi_{r+1}^{(j)}(M)\; (j=1,\dots,s),\\
{}^{\wh\varphi_i}\nu_{y_{s+1},\lambda,\ell}(\Fou M)&={}^{\varphi_i}\mu_{x_i,\pm\lambda,\ell}(M), \\[-3pt]\notag
&\hspace*{2cm}\varphi_i\in\Phi_i(M)\; (i=1,\dots,r),\ \varphi_{r+1}\in\Phi_{r+1}^{(s+1)},
\end{align}
where $\pm\lambda$ is $(-1)^{q(\varphi)}\lambda$.

On the other hand, the rank of $\Fou M$ can be computed by using data at $y_{s+1}$. It is given by
\begin{equation}\label{eq:nuFouM}
h(\Fou M)=
\nu_{y_{s+1}}(\Fou M)
=\sum_{i=1}^r\mu_{x_i}(M)+{}^{(s+1)}\nu_{x_{r+1}}(M),
\end{equation}
which can be refined as
\begin{equation}
\begin{split}
{}^{(r+1)}\nu_{y_{s+1}}(\Fou M)&={}^{(s+1)}\nu_{x_{r+1}}(M),\\
{}^{(i)}\nu_{y_{s+1}}(\Fou M)&=\mu_{x_i}(M)\quad i=1,\dots,r.
\end{split}
\end{equation}
The previous formulas express the transformations of the local numerical data (Definition \ref{def:locnumdata}) of an irreducible holonomic $\Clt$-module by Fourier transform. Note that the formula for ${}^0\nu_{y_{j_o},1,0}(\Fou M)$, as given by the second line of Remark~\ref{rem:missingnu} for~$\Fou M$ can be given a global interpretation in terms of $M$ as follows. Let $\ccM$ denote the unique irreducible $\cD_{\PP^1}$-module whose restriction to $\Afu$ has global sections equal to~$M$.

\begin{proposition}
Under Assumption \ref{ass:irrednotO}\eqref{ass:irrednotO1}, we have
\[
{}^0\nu_{y_{j_o},1,0}(\Fou M)=\dim H^1(\PP^1,\DR\ccM).
\]
\end{proposition}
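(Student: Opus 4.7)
My plan is to compute both sides in terms of the local numerical data of $M$ at its singular points, and check equality.

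Under Assumption \ref{ass:irrednotO}\eqref{ass:irrednotO1}, the minimal extension $\ccM$ on $\PP^1$ is irreducible as a $\cD_{\PP^1}$-module and is not isomorphic to $\cO_{\PP^1}$. Consequently, both $H^0(\PP^1,\DR\ccM)$ and (by duality) $H^2(\PP^1,\DR\ccM)$ vanish, so that $\dim H^1(\PP^1,\DR\ccM)=-\chi(\PP^1,\DR\ccM)$. My first step is to apply the Euler--Poincar\'e formula for a holonomic $\cD$-module on $\PP^1$ to express this Euler characteristic in terms of the generic rank $h(M)$, the vanishing cycle dimensions $\mu_{x_i}(M)$, and the irregularity numbers at each singular point $x_1,\dots,x_r,x_{r+1}$ (using that $\ccM$ is a minimal extension, so that the local characteristic multiplicity at $x_i$ is $\mu_{x_i}(M)$ plus the irregularity).

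On the other side, I apply Remark \ref{rem:missingnu} to $\Fou M$ at the singular point $y_{j_o}=0$, which gives
\[
{}^0\nu_{y_{j_o},1,0}(\Fou M)=h(\Fou M)-\mu_{y_{j_o}}(\Fou M)-{}^0\mu_{y_{j_o},1,\prim}(\Fou M).
\]
I rewrite $h(\Fou M)$ using \eqref{eq:nuFouM} and express both $\mu_{y_{j_o}}(\Fou M)$ and ${}^0\mu_{y_{j_o},1,\prim}(\Fou M)$ in terms of local data of $M$ at $x_{r+1}$ via the stationary phase isomorphisms \eqref{eq:PhiwhPhip}; in particular, $\mu_{y_{j_o}}(\Fou M)$ is controlled by the slope-less-than-one part $\Phi_{r+1}^{(j_o)}(M)$ (via the transformation $p(\wh\varphi)=p(\varphi)-q(\varphi)$), while ${}^0\mu_{y_{j_o},1,\prim}(\Fou M)$ only involves the regular, trivial-monodromy piece at $\infty$ of $M$.

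Finally, I compare the two expressions. The main obstacle will be the bookkeeping in this comparison: the Euler--Poincar\'e formula mixes the contributions of all singular points (including the irregularity and vanishing cycles at $x_{r+1}$), whereas the stationary phase formula decomposes $\Phi_{r+1}(M)$ according to where each piece lands among the $y_j$'s, with only $\Phi_{r+1}^{(j_o)}(M)$ contributing to $\mu_{y_{j_o}}(\Fou M)$. One has to check that the slope-$\geq 1$ parts of $\Phi_{r+1}$ together with the contributions at $x_1,\dots,x_r$ cancel against $h(\Fou M)$ modulo the irregularity at $\infty$, so that the difference with $\mu_{y_{j_o}}$ and $^0\mu_{y_{j_o},1,\prim}$ matches precisely the sum coming from Euler--Poincar\'e.
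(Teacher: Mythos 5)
Your outline stops exactly where the content of the proposition lies, so as it stands there is a genuine gap. The reduction of ${}^0\nu_{y_{j_o},1,0}(\Fou M)$ via Remark \ref{rem:missingnu}, and the identification ${}^0\mu_{y_{j_o},1,\prim}(\Fou M)={}^0\nu_{x_{r+1},1,\prim}(M)$, are fine, and in the \emph{regular} case the comparison with $\dim H^1(\PP^1,\DR\ccM)$ does close in a few lines. But the proposition is stated under Assumption \ref{ass:irrednotO}\eqref{ass:irrednotO1} only, with no regularity hypothesis, and in the irregular case the ``bookkeeping'' you defer is precisely the hard part: (a) you need an Euler--Poincar\'e formula for $\dim H^1(\PP^1,\DR\ccM)$ for an irregular holonomic $\cD_{\PP^1}$-module, which the paper never states in this generality --- and you cannot use \eqref{eq:hMCchi}, since that expression is downstream of Proposition \ref{prop:numdataMCchi}, whose last case ($\varphi=0$, $\lambda=\chi^{-1}$, $\ell=0$) is exactly the present proposition, so that route is circular; (b) on the Fourier side you must match the irregularity terms $\irr_{x_i}(M)$ at finite points against the rank of the local transforms $\ccF^{(x_i,\wh\infty)}$ (which jumps by $\irr_{x_i}$, reflected in $p(\wh\varphi)=p(\varphi)+q(\varphi)$ in \eqref{eq:PhiwhPhip}), account separately for the slope-$1$ and slope-$>1$ parts of $\Phi_{r+1}$ (using $\irr=\mathrm{rank}$ for the slope-$1$ part and $p(\wh\varphi)=q-p$, \resp $p-q$, for the others), and be careful that the rank formula you feed in is consistent with these weights. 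None of this is carried out, and it is not a routine cancellation; if one plugs in the displayed quantities na\"ively, the finite-distance irregularity contributions do not cancel.

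Note also that the paper's own argument is of a different nature and avoids all of this: it computes $H^\cbbullet(\DR M)$ as the complex $\Fou M\To{\tau}\Fou M$, identifies it via the $V$-filtration with $\phi_{y_{j_o},1}^0(\Fou M)\To{\var}\psi_{y_{j_o},1}^0(\Fou M)$, uses minimality of the irreducible module $\Fou M$ at $y_{j_o}$ to get $\dim H^1(\DR M)={}^0\nu_{y_{j_o},1,\prim}(\Fou M)$, and then corrects for the difference between $\ccM$ and its localization $\wt\ccM$ at $x_{r+1}$ through the exact sequence $0\to\ccM\to\wt\ccM\to\ccN\to0$, which subtracts ${}^0\nu_{x_{r+1},1,\prim}(M)={}^0\mu_{y_{j_o},1,\prim}(\Fou M)$ and yields ${}^0\nu_{y_{j_o},1,0}(\Fou M)$ directly. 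If you want to keep your local-data strategy, you must import an independent Euler--Poincar\'e formula valid for irregular modules on $\PP^1$ and then actually verify the slope-by-slope cancellation sketched above; otherwise the homological route of the paper is both shorter and safer.
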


\begin{proof}
Let $\wt\ccM$ denote the localization of $\ccM$ at $x_{r+1}=\infty$, so that $H^k(\PP^1,\DR\wt\ccM)=H^k(\DR M)$. We have an exact sequence $0\to\ccM\to\wt\ccM\to\ccN\to0$, where~$\ccN$ is supported at $\infty$. Note that $H^k(\DR M)$ is the cohomology of the complex $M\To{\partial_t}M$, that we can write $\Fou M\To{\tau}\Fou M$. Standard results on the $V$-filtration of holonomic modules show that this complex is quasi-isomorphic to $\phi_{y_{j_o},1}^0(\Fou M)\To{\var}\psi_{y_{j_o},1}^0(\Fou M)$. Since $\Fou M$ is irreducible, it is a minimal extension at~$y_{j_o}$ (\cf Lemma \ref{lem:irredmini}), and therefore its $\var$ is injective and its $\can$ is onto. As a consequence, $H^k(\DR M)=0$ for $k\neq1$ and $\dim H^1(\DR M)=\dim\coker\var=\dim\coker\rN={}^0\nu_{y_{j_o},1,\prim}(\Fou M)$.

The exact sequence
\[
0\to\phi_{x_{r+1},1}^0(\ccM)\to\phi_{x_{r+1},1}^0(\wt\ccM)\simeq\psi_{x_{r+1},1}^0(\wt\ccM)\to\phi_{x_{r+1},1}^0(\ccN)\to0
\]
shows that $\dim\phi_{x_{r+1},1}^0(\ccN)={}^0\nu_{x_{r+1},1,\prim}(\wt\ccM)={}^0\nu_{x_{r+1},1,\prim}(M)$. Since $\ccN$ is supported at $x_{r+1}$, we obtain $H^k(\PP^1,\DR\ccN)=0$ for $k\neq1$ and $\dim H^1(\PP^1,\DR\ccN)={}^0\nu_{x_{r+1},1,\prim}(M)$. Moreover, $H^2(\PP^1,\DR\ccM)\simeq H^0(\PP^1,\DR\bD\ccM)=0$ since $\ccM$ (hence~$\bD\ccM$) satisfies \ref{ass:irrednotO}\eqref{ass:irrednotO1}. The exact sequence
\[
0\to H^1(\PP^1,\DR\ccM)\to H^1(\PP^1,\DR\wt\ccM)\to H^1(\PP^1,\DR\ccN)\to0,
\]
together with the previous considerations, gives
\begin{align*}
\dim H^1(\PP^1,\DR\ccM)&=\dim H^1(\DR M)-{}^0\nu_{x_{r+1},1,\prim}(M)\\
&={}^0\nu_{y_{j_o},1,\prim}(\Fou M)-{}^0\mu_{y_{j_o},1,\prim}(\Fou M)\\
&={}^0\nu_{y_{j_o},1,0}(\Fou M).\qedhere
\end{align*}
\end{proof}

\subsubsection*{Behaviour of the local numerical data by $\MC_\chi$}
Putting the previous results together, we obtain:

\begin{proposition}\label{prop:numdataMCchi}
If $M$ satisfies \ref{ass:irrednotO}\eqref{ass:irrednotO1} and $\chi\neq1$, we have
\begin{align*}
{}^\varphi\mu_{x_i,\lambda,\ell}(\MC_\chi(M))&={}^\varphi\mu_{x_i,\lambda/\chi^{p(\varphi)+q(\varphi)},\ell}(M)\\[-2pt]
&\hspace*{1cm}\forall i=1,\dots,r,\;\forall\varphi\in\Phi_i(M),\;\forall\lambda\in\CC^*,\;\forall\ell\geq0,\\
{}^\varphi\nu_{x_{r+1},\lambda,\ell}(\MC_\chi(M))
&=\begin{cases}
{}^\varphi\nu_{x_{r+1},\lambda/\chi^{q(\varphi)-p(\varphi)},\ell}(M)&\text{if }\varphi\in\Phi_{r+1}^{(s+1)},\\
{}^\varphi\nu_{x_{r+1},\lambda,\ell}(M)&\text{if }\left\{\begin{array}{l}\varphi\in\Phi_{r+1}^{(j)},\\j\in\{1,\dots,s\}\setminus\{j_o\},\end{array}\right.\\
{}^\varphi\nu_{x_{r+1},\lambda\chi^{p(\varphi)-q(\varphi)},\ell}(M)&\text{if }\varphi\in\Phi_{r+1}^{(j_o)}\setminus\{0\},\\
{}^0\nu_{x_{r+1},\lambda\chi,\ell}(M)&\text{if }\varphi=0,\;\lambda\neq1,\chi^{-1},\\
{}^0\nu_{x_{r+1},\chi,\ell+1}(M)&\text{if }\varphi=0,\;\lambda=1,\\
{}^0\nu_{x_{r+1},1,\ell-1}(M)&\text{if }\varphi=0,\;\lambda=\chi^{-1},\;\ell\geq1,\\
\dim H^1(\PP^1,\DR\ccM)
&\text{if }\varphi=0,\;\lambda=\chi^{-1},\;\ell=0,
\end{cases}\\[2pt]
h\big(\MC_\chi(M)\big)&=h(M)+\dim H^1(\PP^1,\DR\ccM)\\[-2pt]
&\hspace*{1.32cm}+{}^0\nu_{x_{r+1},1,\prim}(M)-{}^0\nu_{x_{r+1},\chi,\prim}(M).
\end{align*}
\end{proposition}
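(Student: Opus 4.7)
The plan is to reduce the statement to the Fourier side. By Proposition~\ref{prop:Mchimin}, $\Fou\MC_\chi(M)$ is the minimal extension at $y_{j_o}=0$ of $\Fou M\otimes L_{\chi^{-1}}(\tau)$, so these two agree locally at every singular point \emph{except possibly} $y_{j_o}=0$. The local data of $\MC_\chi(M)$ are therefore obtained in three steps: transport the data of~$M$ to $\Fou M$ via the local Fourier transform/stationary phase relations \eqref{eq:PhiwhPhip}--\eqref{eq:nuFouM}; twist by $L_{\chi^{-1}}(\tau)$, whose monodromy is $\chi^{-1}$ at $0$ and $\chi$ at $\wh\infty$, using \eqref{eq:nutwist}--\eqref{eq:mutwist}; then transport back via the inverse stationary phase. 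Since the bijection $\varphi\mto\wh\varphi$ is not modified by the twist, the sign factors $(-1)^{q(\varphi)}$ introduced by the two applications of stationary phase always compensate.

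For the vanishing cycle data ${}^\varphi\mu_{x_i,\lambda,\ell}(\MC_\chi(M))$ with $i\le r$, stationary phase identifies them with nearby cycle data at $\wh\infty$ of $\Fou\MC_\chi(M)$, a region unaffected by the minimal extension at~$0$. Formula~\eqref{eq:nulambdatwist} then shifts the monodromy parameter $\lambda$ to $\lambda/\chi^{p(\wh\varphi)}$, and since $p(\wh\varphi)=p(\varphi)+q(\varphi)$ by the third line of~\eqref{eq:PhiwhPhip}, this produces the stated shift.

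For the nearby cycle data at $x_{r+1}$, one partitions $\Phi_{r+1}=\coprod_{j=1}^{s+1}\Phi_{r+1}^{(j)}$ and treats each region separately. When $\varphi\in\Phi_{r+1}^{(j)}$ with $j\in\{1,\dots,s\}\setminus\{j_o\}$, the corresponding datum of $\Fou M$ sits at $y_j\ne0$, where $L_{\chi^{-1}}(\tau)$ is regular with trivial monodromy, so nothing changes. When $\varphi\in\Phi_{r+1}^{(s+1)}$ the datum sits at $\wh\infty$, and \eqref{eq:nulambdatwist} combined with $p(\wh\varphi)=q(\varphi)-p(\varphi)$ yields the shift $\lambda/\chi^{q(\varphi)-p(\varphi)}$. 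When $\varphi\in\Phi_{r+1}^{(j_o)}\setminus\{0\}$ the datum sits at $y_{j_o}=0$, and the first branch of~\eqref{eq:mutwist} (with $\lambda_o=\chi^{-1}$ and $p(\wh\varphi)=p(\varphi)-q(\varphi)$) gives the shift $\lambda\chi^{p(\varphi)-q(\varphi)}$.

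The genuinely delicate case is $\varphi=0$, where the four branches of~\eqref{eq:mutwist} at $y_{j_o}=0$ with $\lambda_o=\chi^{-1}$ must be matched against the special monodromies $\lambda\in\{1,\chi^{-1}\}$. Using the stationary phase identity ${}^0\mu_{y_{j_o},\lambda,\ell}(\Fou M)={}^0\nu_{x_{r+1},\lambda,\ell}(M)$ together with the equality ${}^0\mu={}^0\nu$ for $\lambda\ne 1$ from~\eqref{eq:munu}, the first three branches of~\eqref{eq:mutwist} produce the three middle lines of the $\varphi=0$ case in the statement. In the critical sub-case $\lambda=\chi^{-1}$, $\ell=0$, the fourth branch contributes $h(\Fou M)-\mu(\Fou M)-{}^0\mu_{y_{j_o},1,\prim}(\Fou M)$, which by Remark~\ref{rem:missingnu} applied to~$\Fou M$ equals ${}^0\nu_{y_{j_o},1,0}(\Fou M)$, and is identified by the preceding proposition with $\dim H^1(\PP^1,\DR\ccM)$. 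The global rank formula follows by summing ${}^\varphi\nu_{x_{r+1},\lambda}(\MC_\chi(M))=\sum_\ell(\ell+1)\,{}^\varphi\nu_{x_{r+1},\lambda,\ell}(\MC_\chi(M))$ over all $(\varphi,\lambda)$ via~\eqref{eq:nuM}: the $\varphi\ne 0$ contributions are merely repermuted versions of those of~$M$, while the $\varphi=0$ piece telescopes to produce the extra terms $\dim H^1(\PP^1,\DR\ccM)+{}^0\nu_{x_{r+1},1,\prim}(M)-{}^0\nu_{x_{r+1},\chi,\prim}(M)$. The main obstacle is the careful bookkeeping of the ramification-index shifts $p(\wh\varphi)=p(\varphi)\pm q(\varphi)$ across the four regions of~$\Phi_{r+1}$, together with the matching of piecewise branches of~\eqref{eq:mutwist} against the new special monodromies $\{1,\chi^{-1}\}$ created by the twist.
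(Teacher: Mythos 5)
Your proposal is correct and follows essentially the same route as the paper: reduce via Proposition~\ref{prop:Mchimin} to a twist of $\Fou M$ by $L_{\chi^{-1}}$ on the Fourier side, transport the local data back and forth with the stationary phase formulas \eqref{eq:PhiwhPhip}--\eqref{eq:nuFouM} and the twist formulas \eqref{eq:nutwist}--\eqref{eq:mutwist}, and identify the exceptional term at $\lambda=\chi^{-1}$, $\ell=0$ with $\dim H^1(\PP^1,\DR\ccM)$ through Remark~\ref{rem:missingnu} and the proposition preceding the statement. The only (harmless) variation is that you deduce the rank formula by summing the already-established data at $x_{r+1}$ via \eqref{eq:nuM}, whereas the paper computes the difference $h(\MC_\chi(M))-h(M)$ on the Fourier side using \eqref{eq:nuFouM} and \eqref{eq:mutwistmoinsmu}; the two computations are equivalent.
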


\begin{proof}
Since the first two equalities are straightforward, let us indicate the proof of the last one. By applying \eqref{eq:nuFouM} in the reverse direction and the first two equalities, we get
\begin{align*}
h(\MC_\chi(M))&-h(M)=\nu_{x_{r+1}}(\MC_\chi(M))-\nu_{x_{r+1}}(M)\\
&=\sum_{j=1}^s\big[\mu_{y_j}((\Fou M\otimes L_{\chi^{-1}})_{\min})-\mu_{y_j}(\Fou M)\big]\\[-9pt]
&\hspace*{3.5cm}+{}^{(r+1)}\nu_{y_{s+1}}(\Fou M\otimes L_{\chi^{-1}})-{}^{(r+1)}\nu_{y_{s+1}}(\Fou M)\\
&=\mu_{y_{j_o}}((\Fou M\otimes L_{\chi^{-1}})_{\min})-\mu_{y_{j_o}}(\Fou M)\\
&={}^0\nu_{y_{j_o},1,0}(\Fou M)+{}^0\mu_{y_{j_o},1,\prim}(\Fou M)-{}^0\mu_{y_{j_o},\chi,\prim}(\Fou M)\quad\text{by \eqref{eq:mutwistmoinsmu}}\\
&=\dim H^1(\PP^1,\DR\ccM)+{}^0\nu_{x_{r+1},1,\prim}(M)-{}^0\nu_{x_{r+1},\chi,\prim}(M).\qedhere
\end{align*}
\end{proof}

\subsection{The case of a scalar monodromy at infinity}\label{subsec:localdatareg}
Together with Assumption \ref{ass:irrednotO}\eqref{ass:irrednotO1}, we moreover assume \ref{ass:irrednotO}\eqref{ass:irrednotO2} and we choose $\chi=\lambda_o$.
\begin{corollaire}\label{cor:localdatareg}
Under these assumptions, $\MC_\chi(M)$ also fulfills Assumptions \ref{ass:irrednotO}\eqref{ass:irrednotO1} and \eqref{ass:irrednotO2}, with scalar monodromy at infinity equal to $\lambda_o^{-1}\id$. Moreover,
\begin{starequation}
h\big(\MC_\chi(M)\big)=\dim H^1(\PP^1,\DR\ccM)=\dim H^1(\DR M).
\end{starequation}\end{corollaire}

\begin{proof}
The first point directly follows from the formulas in Proposition \ref{prop:numdataMCchi}. Since $\nu_{x_{r+1},1}(M)=0$ and $\nu_{x_{r+1},\chi,\prim}(M)=\nu_{x_{r+1},\chi}(M)=\nu_{x_{r+1}}(M)=h(M)$, we also get from this proposition:
\[
h\big(\MC_\chi(M)\big)=\dim H^1(\PP^1,\DR\ccM)=\dim H^1(\DR M),
\]
where the last equality follows from the equality $\ccM=\wt\ccM$, by our supplementary assumption and the choice of $\chi$.
\end{proof}

We get a topological expression of $h\big(\MC_\chi(M)\big)$ in terms of the perverse sheaf $\DR^\an M$:
\begin{equation}\label{eq:hMCchi}
h\big(\MC_\chi(M)\big)=\sum_{i=1}^r\mu_{x_i}\DR^\an M-h(M),
\end{equation}
where $\mu_{x_i}\DR^\an M=\mu_{x_i}M$ if $M$ has a regular singularity at $x_i$, but both numbers are distinct otherwise:
\[
\mu_{x_i}\DR^\an M=\irr_{x_i}(M)+h(M)-{}^0\nu_{1,\prim}(\wh M_{x_i})=\irr_{x_i}(M)+\mu_{x_i}(M),
\]
where the last equality follows from \eqref{eq:muone}, and where $\irr_{x_i}(M):=\irr(\wh M_{x_i})=\irr({}^\irr\wh M_{x_i})$ denotes the irregularity number of $M$ at $x_i$:
\[
\irr({}^\irr\wh M_{x_i}):=\sum_{\varphi\in\Phi_i(M)\setminus\{0\}}\text{slope}(\varphi)\cdot{}^\varphi\nu_{x_i}(M).
\]
We can thus also write
\[
\mu_{x_i}\DR^\an M=\sum_{\varphi\in\Phi_i(M)}(\text{slope}(\varphi)+1)\cdot{}^\varphi\mu_{x_i}(M).
\]

\begin{remarque}
In such a case, $\ccM_\chi$ is smooth along $\{x_{r+1}\}\times\Afu_t$, because the monodromy of $M\boxtimes L_\chi$ around this divisor is equal to the identity.
\end{remarque}

\begin{remarque}[Degree of $V^0$]
We keep the same setting as above and let us assume that $M$, hence also $\MC_\chi(M)$, has only regular singularities. Then the locally defined $\CC\lcr x\rcr$-modules $V^a$ at each singularity (\cf\S\ref{subsec:locnumdatahol}) are the formalization of some $\cO_{\PP^1}$-modules that we simply denote by $V^a(M)$ and $V^a(\MC_\chi(M))$, which are locally free if $a>-1$. We will be mostly interested by $V^0$ and its degree, which is nonpositive, according to the definition of $V^0$ and the residue formula. The residue formula, together with Proposition \ref{prop:numdataMCchi} (in the regular case), and with Assumptions \ref{ass:irrednotO}\eqref{ass:irrednotO1} and \eqref{ass:irrednotO2}, gives, for $\chi=\lambda_o\neq1$ and $\alpha_o\in(0,1)$ such that $\exp-\twopii\alpha_o=\lambda_o$:
\[
\deg V^0\big(\MC_\chi(M)\big)=\deg V^0(M)+h(M)-\sum_{i=1}^r\sum_{\alpha\in[0,1-\alpha_o)}\mu_{x_i,\exp(-\twopii\alpha)}(M).
\]
\end{remarque}

\subsection{A quick review of Katz' existence algorithm for rigid local systems}\label{subsec:Katzalg}
Let $\cV$ be an irreducible local system on $\Afu\setminus\bmx$, let $(V,\nabla)$ be the associated holomorphic bundle with connection. Its Deligne meromorphic extension on $\PP^1$ is a holonomic $\cD_{\PP^1}$-module. Let $\wt\ccM$ denote its minimal extension at each singularity at finite distance, and set $M=\Gamma(\PP^1,\wt\ccM)$. This is an irreducible holonomic $\cD(\Afu)$\nobreakdash-module with \emph{regular singularities} at $\bmx$, and any such module is obtained in this way.\footnote{We use the Zariski topology when working with $\cD$-modules, while we use the analytic topology when working with holomorphic bundles, local systems or perverse sheaves. For vector bundles on $\PP^1$ or holonomic $\cD_{\PP^1}$-modules, we implicitly use a GAGA argument to compare both kinds of objects. Although we should distinguish both by an exponent ``alg'' or ``an'', we will let this to the reader.}

Let $j:\Afu\setminus\bmx\hto\PP^1$ denote the open inclusion. We say (\cf\cite{Katz96}) that $\cV$ is \emph{cohomologically rigid} if its \emph{index of rigidity} $\chi(\PP^1,j_*\cEnd(\cV))$ is equal to $2$. Recall (\cf\cite[\S1.1]{Katz96}) that this index is computed as
\[
\chi(\PP^1,j_*\cEnd(\cV))=(1-r)(\rk\cV)^2+\sum_{i=1}^{r+1}\dim C(A_i),
\]
where $C(A_i)$ is the centralizer of the local monodromy $A_i$ at $x_i$. With the notation \eqref{eq:nuvarphi}, we have
\begin{align*}
\dim C(A_i)&=\sum_{\lambda\in\CC^*}\sum_{k,\ell\geq0}\nu_{x_i,\lambda,\ell}\nu_{x_i,\lambda,k}\min\{k+1,\ell+1\}\\
&\leq\sum_{\lambda\in\CC^*}\sum_{k,\ell\geq0}\nu_{x_i,\lambda,k}\nu_{x_i,\lambda,\ell}(\ell+1)
=\sum_\lambda\nu_{x_i,\lambda,\prim}\cdot\nu_{x_i,\lambda}.
\end{align*}

It follows from \cite[Th\ptbl3.3.3]{Katz96} (in the $\ell$-adic setting) and from \cite[Th\ptbl4.3]{B-E04} (in the present complex setting) that if $M$ (that is, $\cV$) is rigid, then $\MC_\chi(M)$ is rigid for any nontrivial $\chi$. Moreover, as obviously follows from the formula above, if $\cL_{\lambda_1,\dots,\lambda_r}$ denotes the rank-one local system on $\Afu\setminus\bmx$ having monodromy $\lambda_i$ around $x_i$ (and hence $\lambda_{r+1}=(\lambda_1\cdots\lambda_r)^{-1}$ around $x_{r+1}$), then $\cV$ is rigid if and only if $\cV\otimes\cL$ is~so.

A celebrated theorem of Katz asserts that $\cV$ is rigid if and only if it can be brought, after an initial homography, by a successive application of tensor products by suitable rank-one local systems and middle convolutions by suitable Kummer sheaves, to a rank-one local system. We will quickly review this algorithm.\footnote{Since we will be mainly interested in the case with regular singularities, we will not review the irregular analogue, due to Deligne on the one hand, and Arinkin \cite{Arinkin08} on the other hand.}

We consider the category consisting of local systems on $\Afu\setminus\bmx$ satisfying Assumption \ref{ass:irrednotO}\eqref{ass:irrednotO2} (\ie the associated regular holonomic $\cD(\Afu)$-module $M$ satisfies this assumption). Given such a local system $\cV$, we will only consider tensor products by rank-one local systems and middle convolutions by Kummer sheaves which preserve the property of being in this category. We will call these operations (or rank-one local systems) ``allowed''. Proposition \ref{prop:numdataMCchi} shows that, if $\cV$ satisfies Assumption \ref{ass:irrednotO}\eqref{ass:irrednotO2}, the only allowed $\MC_\chi$ is for $\chi=\lambda_o$ (where $\lambda_o\id$ is the local monodromy of $\cV$ at $x_{r+1}$).

Given any local system $\cV$ on $\PP^1\setminus D$, for some finite set $D$, then up to adding a fake singular point, to tensoring by a rank-one local system and to pull-back by a homography, we can assume that $\cV$ belongs to the category above. For the Katz algorithm, we can therefore start from a rigid irreducible local system $\cV$ whose monodromy at $x_{r+1}$ is $\lambda_o\id$ with $\lambda_o\neq1$. In such a case, we have
\[
2=\chi(\PP^1,j_*\cEnd(\cV))=(2-r)(\rk\cV)^2+\sum_{i=1}^r\dim C(A_i).
\]

The main step of the algorithm consists therefore in the following lemma.

\begin{lemme}\label{lem:Lallowed}
Let $\cV$ be such a local system of rank $\geq2$. For each $i=1,\dots,r$, let $\lambda_i\in\CC^*$ be such that $\nu_{x_i,\lambda_i,\prim}=\max_{\lambda\in\CC^*}\nu_{x_i,\lambda,\prim}$ and let $\cL$ be the rank-one local system with monodromy $1/\lambda_i$ at $x_i$. Then $\cL$ is allowed for $\cV$ and the rank of the unique allowed $\MC_\chi(\cL\otimes\cV)$ is $<\rk\cV$.
\end{lemme}

\begin{proof}
We have $\mu_{x_i}(\cL\otimes\cV)=(\rk\cV-\nu_{x_i,\lambda_i,\prim})$ and $\dim C(A_i)\leq \nu_{x_i,\lambda_i,\prim}\rk\cV$, hence
\[
(r-2)(\rk\cV)^2+2\leq\sum_{i=1}^r\nu_{x_i,\lambda_i,\prim}\rk\cV=r(\rk\cV)^2-\rk\cV\sum_{i=1}^r\mu_{x_i}(\cL\otimes\cV),
\]
by the rigidity assumption, and therefore $\sum_{i=1}^r\mu_{x_i}(\cL\otimes\cV)<2\rk\cV$. Once we know that $\cL$ is allowed, \eqref{eq:hMCchi} gives $\rk\MC_\chi(\cL\otimes\cV)=\sum_{i=1}^r\mu_{x_i}(\cL\otimes\cV)-\rk\cV<\rk\cV$.

If $\cL$ were not allowed, then the monodromy of $\cL\otimes\cV$ at $x_{r+1}$ would be the identity since that of $\cV$ is already scalar, and a formula similar to \eqref{eq:hMCchi} would give $\dim H^1(\PP^1,j_*(\cL\otimes\cV))=\sum_{i=1}^r\mu_{x_i}(\cL\otimes\cV)-2\rk\cV$, hence $\sum_{i=1}^r\mu_{x_i}(\cL\otimes\cV)\geq2\rk\cV$, in contradiction with the inequality given by rigidity.
\end{proof}

\section{Basics on Hodge theory}
\subsection{Variations of complex Hodge structure}\label{subsec:VPCHS}
Let $X$ a complex manifold. A~variation of polarized complex Hodge structure of weight $w\in\ZZ$ on $X$ is a $C^\infty$ vector bundle $H$ on $X$ equipped with
\begin{itemize}
\item
a grading $H=\bigoplus_{p\in\ZZ}H^p$,
\item
a flat connection $D$,
\item
and a $D$-flat sesquilinear pairing $k$,
\end{itemize}
such that the decomposition is $k$-orthogonal, the pairing $k$ is $(-1)^w$-Hermitian, the connection satisfies
\begin{align*}
D'H^p&\subset(H^p\oplus H^{p-1})\otimes\cA_X^{1,0}\\
D''H^p&\subset(H^p\oplus H^{p+1})\otimes\cA_X^{0,1},
\end{align*}
and the Hermitian pairing $h$ defined by the properties that the decomposition is $h$\nobreakdash-orthogonal and $h_{|H^p}=i^{-w}(-1)^pk_{|H^p}$, is positive definite.

As a consequence, the filtration $F^pH:=\bigoplus_{q\geq p}H^q$ satisfies $D''F^pH\subset F^pH\otimes\nobreak\cA_X^{0,1}$ and $D'F^pH\subset F^{p-1}H\otimes\cA_X^{1,0}$. The holomorphic bundle $V\defin\ker D''$ is equipped with a flat holomorphic connection $\nabla\defin D'_{\ker D''}$ and a filtration $F^pV=V\cap F^pH$ by holomorphic sub-bundles (\ie such that each $\gr^p_FV$ is a holomorphic bundle) which satisfies $\nabla F^pV\subset F^{p-1}V\otimes\Omega^1_X$ for all $p$. We will also denote by $(V,F^\cbbullet V,\nabla,k)$ such a variation, since these data together with the corresponding conditions allows one to recover the data $(H,\bigoplus H^\cbbullet,D,k)$.

If $(H,\bigoplus H^\cbbullet,D,k)$ is a variation of polarized complex Hodge structure of weight~$w$, then $(H,\bigoplus H^\cbbullet,D,i^{-w}k)$ has weight $0$. In this way, one can reduce to weight $0$. If we do not care much on the precise polarization, which will be the case below, we can assume that the weight is zero. We will therefore not mention the weight by considering variations of \emph{polarizable} complex Hodge structure.

More generally, given a polarizable complex Hodge structure $(H_o,\bigoplus H_o^\cbbullet,k_o)$, the tensor product $H\otimes H_o$ is naturally equipped with the structure of a variation of polarizable complex Hodge structure.\enlargethispage{\baselineskip}%

\begin{definition}[The local invariant $h^p$]
Given a filtered holomorphic bundle $(V,F^\cbbullet V)$ on a \emph{connected} complex manifold $X$, we will set $h^p(V)=h^p(V,F^\cbbullet V)=\rk\gr^p_FV$.
\end{definition}

For a variation of (polarizable) complex Hodge structure, we thus have
\[
h^p(V)=\rk H^p.
\]

\subsection{Local Hodge theory at a singular point and local invariants}\label{subsec:localHodgetheory}
Let $\Delta$ be a disc with coordinate $t$ and let $(V,F^\cbbullet V,\nabla,k)$ be a variation of polarizable complex Hodge structure on the punctured disc $\Delta^*$. Let $j:\Delta^*\hto\Delta$ be the open inclusion.

\subsubsection*{Extension across the origin}\label{subsubsec:extorigin}
We have the various extensions $V^a\subset V^{-\infty}\subset j_*V$, where $V^{-\infty}$ is Deligne's meromorphic extension and $V^a$ (\resp $V^{>a}$) ($a\in\RR$) is the free $\cO_\Delta$-module on which the residue of $\nabla$ has eigenvalues in $[a,a+1)$ (\resp $(a,a+\nobreak1]$). Here, the property that the eigenvalues $\lambda=\exp(-\twopii a)$ of the monodromy have absolute value one follows from the similar property for variations of real Hodge structure (\cf\cite[\S11]{Zucker79}) and the standard trick of making a real variation from a complex variation by adding its complex conjugate (\cf\S\ref{subsec:complexHodgemodule}).

For each $a\in\RR$, the Hodge bundle $F^pV^a$ is defined as $j_*F^pV\cap V^a$. We have $t^m V^a=V^{a+m}$ in $V^{-\infty}$ for each $m\in\ZZ$. Since clearly $t^mj_*F^pV=j_*F^pV$ for each $m\in\ZZ$, we conclude that for each $a\in\RR$ and $m\in\ZZ$, $t^mF^pV^a=F^pV^{a+m}$.

We now denote by $M=V^{-\infty}_{\min}\subset V^{-\infty}$ the $\cD_\Delta$-submodule generated by $V^{>-1}$: $M=V^{>-1}+\nabla_{\partial_t}V^{>-1}+\cdots\subset V^{-\infty}$. Recall that, if $j:\Delta^*\hto\Delta$ denotes the inclusion and $\ccV=V^\nabla$ is the locally constant sheaf of horizontal sections, we have $\DR V^{-\infty}=\bR j_*\ccV$ while $\DR M=j_*\ccV$. We have a $V$-filtration of $M$:
\[
V^aM=V^a\text{ for $a>-1$},\quad V^{-1}M=\nabla_{\partial_t}V^0+V^{>-1},\text{ etc.}
\]
The $\cD_\Delta$-module $M$ is equipped with a filtration:
\begin{equation}\label{eq:goodfiltr}
F^pM=F^pV^{>-1}+\nabla_{\partial_t}F^{p+1}V^{>-1}+\cdots+\nabla_{\partial_t}^kF^{p+k}V^{>-1}+\cdots\subset F^pV^{-\infty}.
\end{equation}

That $F^pV^a$ is a vector bundle follows from the Nilpotent orbit theorem \cite[(4.9)]{Schmid73}, and one deduces that $F^\cbbullet M$ is a good filtration of $M$ (\cf also \cite[Prop\ptbl3.10]{Bibi05}). Moreover, it follows from Theorem \ref{th:Schmid} below (by computing the dimension of the fibre at the origin) that each $\gr^p_FV^a$ is a vector bundle.

\subsubsection*{Nearby cycles}
The nearby cycles at the origin are defined as follows: for $a\in(-1,0]$ and $\lambda=\exp(-\twopii a)$,
\[
\psi_\lambda(M)=\psi_\lambda(V^{-\infty})=\gr^a_V=V^a/V^{>a}.
\]
It is equipped with the nilpotent endomorphism $\rN=-(t\partial_t-a)$. The Hodge filtration
\[
F^p\psi_\lambda(M):=F^pV^a/F^pV^{>a}=F^p\psi_\lambda(V^{-\infty})
\]
satisfies $\rN F^p\psi_\lambda(M)\subset F^{p-1}\psi_\lambda(M)$.

\begin{theoreme}[Schmid \cite{Schmid73}]\label{th:Schmid}
If $(V,F^pV,\nabla)$ is part of a variation of polarizable complex Hodge structure, then $(\psi_\lambda(M),F^\cbbullet\psi_\lambda(M),\rN)$ is part of a nilpotent orbit. Moreover,
\begin{starequation}\label{eq:Schmid*}
\rk F^pV=\sum_{\lambda\in S^1}\dim F^p\psi_\lambda(M),
\end{starequation}
hence
\begin{starstarequation}\label{eq:Schmid**}
h^p(V)=\sum_{\lambda\in S^1}h^p\psi_\lambda(M).
\end{starstarequation}
\end{theoreme}

In the following, we will set $\nu^p_\lambda(V)=h^p\psi_\lambda(M)=h^p\psi_\lambda(V^{-\infty})$ for $\lambda\in\CC^*$ (in~fact $\lambda\in S^1$). Note that the associated graded nilpotent orbit (graded with respect to the monodromy filtration $W$ of $\rN$) has the same numbers $h^p(\gr^W\psi_\lambda(M))=h^p(\psi_\lambda(M))$. The Hodge filtration on $\gr^W\psi_\lambda(M)=\gr^W\psi_\lambda(V^{-\infty})$ splits with respect to the Lefschetz decomposition associated with~$\rN$. The primitive part $\rP_\ell\psi_\lambda(M)$, equipped with the filtration induced by that on $\gr_\ell^W\psi_\lambda(M)$ and a suitable polarization, is a polarizable complex Hodge structure (\cite{Schmid73}). We can thus define the numbers
\[
\nu^p_{\lambda,\ell}(M)=\nu^p_{\lambda,\ell}(V^{-\infty}):=h^p\big(\rP_\ell\psi_\lambda(M)\big)=\dim\gr^p_F\rP_\ell\psi_\lambda(M),
\]
which are a refinement of the numbers $\nu_{\lambda,\ell}(M)$ considered in \S\ref{subsec:locnumdatahol} (by taking $\varphi=\nobreak0$ there and forgetting the left exponent $0$, since $M$ has a regular singularity). Accor\-ding to the $F$-strictness of $\rN$ and the Lefschetz decomposition of $\gr^W\psi_\lambda(M)$, we have
\begin{equation}\label{eq:nuplambda}
\nu^p_\lambda(M)=\sum_{\ell\geq0}\sum_{k=0}^\ell\nu_{\lambda,\ell}^{p+k}(M),
\end{equation}
and we set
\begin{equation}\label{eq:nuplambdaprim}
\nu^p_{\lambda,\prim}(M)=\sum_{\ell\geq0}\nu_{\lambda,\ell}^p(M),\quad\nu^p_{\lambda,\coprim}(M)=\sum_{\ell\geq0}\nu_{\lambda,\ell}^{p+\ell}(M).
\end{equation}
We have
\[
\nu^p_\lambda(M)-\nu^{p-1}_\lambda(M)=\nu^p_{\lambda,\coprim}(M)-\nu^{p-1}_{\lambda,\prim}(M).
\]

\subsubsection*{Vanishing cycles}
For $\lambda\neq1$, we set $\phi_\lambda(M)=\psi_\lambda(M)$, together with the monodromy, with $\rN$ and with the filtration $F^p$. If we set $\mu^p_{\lambda,\ell}(M)=\dim\gr^p_F\rP_\ell\phi_\lambda(M)$, we thus have by definition, for $\lambda\neq1$,
\[
\mu_{\lambda,\ell}^p(M)=\nu_{\lambda,\ell}^p(M)\quad\forall p.
\]
Let us now focus on $\lambda=1$. We have by definition
\[
\phi_1(M)=\gr^{-1}_V(M).
\]
On the other hand, the filtration $F^\cbbullet\phi_1(M)$ is defined so that we have natural morphisms
\[
(\psi_1(M),\rN,F^\cbbullet)\To{\can}\hspace*{-18pt}\to\hspace*{2pt}(\phi_1(M),\rN,F^\cbbullet)\Hto{\var}(\psi_1(M),\rN,F^\cbbullet)(-1),
\]
where the Tate twist by $-1$ means a shift of the Hodge filtration by $-1$, so that the object $(\psi_1(M),\rN,F^\cbbullet)(-1)$ is also a mixed Hodge structure. Since $\can$ is strictly onto and $\var$ is injective, $(\phi_1(M),\rN,F^\cbbullet)$ is identified with $\im\rN$ together with the filtration $F^p\im\rN=\rN(F^p)$. We also have, by definition of the Hodge filtration on~$M$,
\[
F^p\phi_1(M)=\frac{F^{p-1}M\cap V^{-1}M}{F^{p-1}M\cap V^{>-1}M}.
\]
For $\ell\geq0$, we thus have
\[
F^p\rP_\ell\phi_1(M)=\rN(F^p\rP_{\ell+1}\psi_1(M)),
\]
and therefore
\[
\mu_{1,\ell}^p(M)=\nu_{1,\ell+1}^p(M).
\]

\begin{proposition}[{\cf\cite[Prop\ptbl2.1.3]{K-K87}}]\label{prop:vanishingcycles}
$(\phi_1(M),\rN,F^\cbbullet)$ is part of a nilpotent orbit. Moreover,
\begin{starequation}\label{eq:vanishingcycles*}
\mu_1^p(M)=\nu_1^p(M)-\nu_{1,\coprim}^p(M)=\nu_1^{p-1}(M)-\nu_{1,\prim}^{p-1}(M).
\end{starequation}\end{proposition}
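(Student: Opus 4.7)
The proposition has two parts: the nilpotent orbit assertion for $(\phi_1(M),\rN,F^\cbbullet)$ and the two displayed identities. For the first, I would deduce everything from Schmid's theorem \ref{th:Schmid} applied to $(\psi_1(M),F^\cbbullet,\rN)$ via the Lefschetz/primitive decomposition. For the second, I would reduce to the elementary combinatorial identity $\mu_{1,\ell}^p(M)=\nu_{1,\ell+1}^p(M)$ established just before the statement.

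\textbf{Nilpotent orbit structure on $\phi_1$.} Writing the monodromy filtration $W$ of $\rN$ on $\psi_1(M)$, one has the $\mathfrak{sl}_2$-Lefschetz decomposition
\[
\gr^W\psi_1(M)=\bigoplus_{\ell\geq0}\bigoplus_{k=0}^{\ell}\rN^k\rP_\ell\psi_1(M).
\]
Applying $\rN$ and using that $\rN$ sends primitives of weight $\ell+1$ injectively into the $(\ell-1)$-graded piece, one identifies $\rP_\ell\phi_1(M)=\rN\rP_{\ell+1}\psi_1(M)$. The identity $F^p\rP_\ell\phi_1(M)=\rN(F^p\rP_{\ell+1}\psi_1(M))$ already recalled before the proposition then shows that each primitive part of $\phi_1(M)$ is, up to a Tate twist by $-1$, the primitive part $\rP_{\ell+1}\psi_1(M)$ of the nilpotent orbit $\psi_1(M)$. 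Since each $\rP_{\ell+1}\psi_1(M)$ is a polarizable pure complex Hodge structure by Theorem \ref{th:Schmid}, and since the induced $\mathfrak{sl}_2$-action on $\phi_1(M)$ is the restriction of the one on~$\psi_1(M)$, the data $(\phi_1(M),\rN,F^\cbbullet)$ form a nilpotent orbit (of central weight shifted by $-1$). This is essentially the content of \cite[Prop\ptbl2.1.3]{K-K87}; the only thing to verify beyond citing it is strictness of $\rN$ for $F^\cbbullet$ on primitives, which is built into the nilpotent orbit axioms.

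\textbf{The identities.} By \eqref{eq:nuplambda} applied to $\phi_1$ and the identity $\mu_{1,\ell}^p(M)=\nu_{1,\ell+1}^p(M)$,
\[
\mu_1^p(M)=\sum_{\ell\geq0}\sum_{k=0}^{\ell}\mu_{1,\ell}^{p+k}(M)=\sum_{\ell\geq0}\sum_{k=0}^{\ell}\nu_{1,\ell+1}^{p+k}(M).
\]
Re-indexing $\ell'=\ell+1$ turns the right-hand side into $\sum_{\ell'\geq1}\sum_{k=0}^{\ell'-1}\nu_{1,\ell'}^{p+k}(M)$, which by \eqref{eq:nuplambda} and \eqref{eq:nuplambdaprim} equals $\nu_1^p(M)-\nu_{1,\coprim}^p(M)$. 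This yields the first equality. The second equality is obtained by combining with the identity
\[
\nu_1^p(M)-\nu_1^{p-1}(M)=\nu_{1,\coprim}^p(M)-\nu_{1,\prim}^{p-1}(M)
\]
displayed at the end of the nearby-cycle subsection: it gives $\nu_1^p-\nu_{1,\coprim}^p=\nu_1^{p-1}-\nu_{1,\prim}^{p-1}$.

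\textbf{Main obstacle.} Nothing here is genuinely hard once Theorem \ref{th:Schmid} is in hand; the only subtle point is checking that $\rN$ is strict with respect to $F^\cbbullet$ on the primitive parts so that the formula $F^p\rP_\ell\phi_1(M)=\rN(F^p\rP_{\ell+1}\psi_1(M))$ really defines the correct Tate-twisted Hodge filtration. This strictness is a standard consequence of the fact that $\rN$ is a morphism of type $(-1,-1)$ between the pure Hodge structures $\rP_{\ell+1}\psi_1(M)$ and its Tate twist. Once this is recorded, both identities reduce to the bookkeeping above.
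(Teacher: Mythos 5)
Your proposal is correct and follows essentially the same route as the paper, which gives no separate written proof: the nilpotent orbit assertion is delegated to \cite[Prop\ptbl2.1.3]{K-K87} (exactly as you do), and the two displayed identities are precisely the bookkeeping you carry out, namely combining $\mu_{1,\ell}^p(M)=\nu_{1,\ell+1}^p(M)$, established just before the statement via $F^p\rP_\ell\phi_1(M)=\rN(F^p\rP_{\ell+1}\psi_1(M))$, with the Lefschetz sum formulas \eqref{eq:nuplambda}--\eqref{eq:nuplambdaprim} and the relation $\nu_1^p-\nu_1^{p-1}=\nu_{1,\coprim}^p-\nu_{1,\prim}^{p-1}$. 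Your re-indexing argument is accurate, so nothing further is needed.
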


Note that, using the Lefschetz decomposition for the graded pieces of the monodromy filtration of $(\phi_1(M),\rN)$, we also have
\begin{equation}\label{eq:muplambda}
\mu^p_\lambda(M)=\sum_{\ell\geq0}\sum_{k=0}^\ell\mu_{\lambda,\ell}^{p+k}.
\end{equation}
We will set, similarly to \eqref{eq:nuplambdaprim}:
\begin{equation}\label{eq:muplambdaprim}
\mu^p_{\lambda,\prim}(M)=\sum_{\ell\geq0}\mu_{\lambda,\ell}^p(M),\quad\mu^p_{\lambda,\coprim}(M)=\sum_{\ell\geq0}\mu_{\lambda,\ell}^{p+\ell}(M).
\end{equation}

\subsubsection*{Comparison between $M$ and $V^{-\infty}$}
Let us denote by $N$ the cokernel of $M\hto V^{-\infty}$ equipped with the filtration induced by that of $V^{-\infty}$. It is supported at the origin.

\begin{lemme}\label{lem:comparisonMV}
The sequence
\[
0\to(M,F^\cbbullet M)\to(V^{-\infty},F^\cbbullet V^{-\infty})\to(N,F^\cbbullet N)\to0
\]
is exact and strict. Moreover, $(N,F^\cbbullet N)$ is identified with the cokernel of the morphism $\var:\phi_1(M)\to\psi_1(M)(-1)$ of mixed Hodge structures. In particular,
\[
h^p(N)=0,\quad\mu_1^p(N)=\dim\gr^p_F(N)=\nu_{1,\prim}^{p-1}(M).
\]
\end{lemme}

\subsubsection*{Local Hodge numerical data}
The various numerical data that we already introduced are recovered from the following Hodge numerical data. We use the notation $(V^{-\infty},F^\cbbullet V^{-\infty})$ and $(M,F^\cbbullet M)$ as above.

\begin{definition}[Local Hodge data]
For $\lambda\!\in\!S^1$, $p\!\in\!\ZZ$ and $\ell\!\in\!\NN$, we set
\par(Local Hodge data for $V^{-\infty}$)
\begin{itemize}
\item
$\nu^p_{\lambda,\ell}(V^{-\infty})=\dim\gr^p_F\rP_\ell\psi_\lambda(V^{-\infty})$, where $\rP_\ell\psi_\lambda(V^{-\infty})$ denotes the primitive part of $\gr_\ell^W\psi_\lambda(V^{-\infty})$ (a polarizable Hodge structure of weight $w+\ell$),
\item
$\nu^p_{\lambda}(V^{-\infty})$ given by \eqref{eq:nuplambda},
\item
$\nu^p(V^{-\infty})=\sum_\lambda\nu^p_\lambda(V^{-\infty})$, so that $\nu^p(V^{-\infty})=h^p(V)$ by \eqref{eq:Schmid**},
\end{itemize}

(Local Hodge data for $M$)
\begin{itemize}
\item
$h^p(V)$,
\item
$\mu^p_{1,\ell}(M)=\dim\gr^p_F\rP_\ell\phi_1(M)$, where $\rP_\ell\phi_1(M)$ denotes the primitive part of $\gr_\ell^W\phi_1(M)$, and $\mu^p_{\lambda,\ell}(M)=\nu^p_{\lambda,\ell}(M)$ if $\lambda\neq1$,
\item
$\mu^p_{\lambda}(M)$ given by \eqref{eq:muplambda} and $\mu^p(M)=\sum_\lambda\mu^p_\lambda(M)$.
\end{itemize}
\end{definition}

\begin{remarque}\label{rem:missingnup}
When considering a minimal extension $M$, we only deal with the data $\mu^p_\bbullet$. Then the data $\nu^p_1$ are recovered from the data $\mu^p_\bbullet$ together with $h^p(V)$:
\[
\nu^p_{1,\ell}(M)=\begin{cases}
\mu^p_{1,\ell-1}(M)&\text{if }\ell\geq1,\\
h^p(V)-\mu^p(M)-\mu_{1,\coprim}^{p+1}(M)&\text{if }\ell=0.
\end{cases}
\]
\end{remarque}

\subsubsection*{Twist with a unitary rank-one local system}\label{subsubsec:twistHodge}
Let $\ccL$ be a nontrivial unitary rank-one local system on $\Delta^*$, determined by its monodromy $\lambda_o\in S^1\setminus\{1\}$, and let $(L,\nabla)$ be the associated bundle with connection. We simply denote by $L^\cbbullet$ the various Deligne extensions of $(L,\nabla)$. It will be easier to work with $L^0$. We set $\lambda_o=\exp(-\twopii\alpha_o)$ with $\alpha_o\in(0,1)$. Then, $L^0=L^{\alpha_o}$ and, for each $a\in\RR$,
\[
V^a\otimes L^0=(V^{-\infty}\otimes L^{-\infty})^{a+\alpha_o}\subset(V^{-\infty}\otimes L^{-\infty})^a.
\]
On the other hand, the Hodge bundles on $V\otimes L$ are $F^pV\otimes L$ so that, by Schmid's procedure, for each $a$,
\[
F^p(V^{-\infty}\otimes L^{-\infty})^a:=j_*(F^pV\otimes L)\cap (V^{-\infty}\otimes L^{-\infty})^a
\]
(intersection taken in $V^{-\infty}\otimes L^{-\infty}$) is a bundle, and we have a mixed Hodge structure by inducing $F^p(V^{-\infty}\otimes L^{-\infty})^a$ on $\gr^a_V(V^{-\infty}\otimes L^{-\infty})$. We claim that
\begin{equation}\label{eq:FpVL}
F^pV^a\otimes L^0=F^p(V^{-\infty}\otimes L^{-\infty})^{a+\alpha_o}.
\end{equation}
This amounts to showing
\[
(j_*F^p\cap V^a)\otimes L^0=j_*(F^pV\otimes L)\cap (V^{-\infty}\otimes L^{-\infty})^{a+\alpha_o},
\]
intersection taken in $V^{-\infty}\otimes L^{-\infty}$. The inclusion $\subset$ is clear, and the equality is shown by working with a local basis vector of $L^0$, which can also serve as a basis for $L$ and~$L^{-\infty}$.

We deduce that, in a way analogous to \eqref{eq:nutwist}, \eqref{eq:nulambdatwist} and \eqref{eq:mutwist},
\begin{align}\label{eq:nupVL}
h^p(V\otimes L)&=h^p(V),\\
\nu_{\lambda,\ell}^p(V^{-\infty}\otimes L^{-\infty})&=\nu_{\lambda/\lambda_o,\ell}^p(V^{-\infty})\\
\label{eq:muVL}
\mu_{\lambda,\ell}^p((V^{-\infty}\otimes L^{-\infty})_{\min})&=
\begin{cases}
\mu_{\lambda/\lambda_o,\ell}^p(M)&\text{if }\lambda\neq1,\lambda_o,\\
\mu_{1/\lambda_o,\ell+1}^p(M)&\text{if }\lambda=1,\\
\mu_{1,\ell-1}^p(M)&\text{if }\lambda=\lambda_o\text{ and }\ell\geq1,\\
\left.\begin{array}{l}
h^p(V)-\mu^p(M)\\
-\mu_{1,\coprim}^{p+1}(M)
\end{array}\right\}\hspace*{-2mm}
&\text{if }\lambda=\lambda_o\text{ and }\ell=0.
\end{cases}
\end{align}

\subsection{Hodge numerical data for a variation on $\Afu\setminus\bmx$}\label{subsec:HodgenumdataA1}
Let $\bmx=\{x_1,\dots,x_r\}$ ($r\geq1$) be a finite set of points in $\Afu$ and set $x_{r+1}=\infty\in\PP^1$. Let $(V,F^\cbbullet V,\nabla,k)$ be a variation of polarizable complex Hodge structure on $U=\Afu\setminus\bmx$. Together with the local Hodge numerical data at each $x_i$ ($i=1,\dots,r+1$) we consider the following global numerical data. We consider the Hodge bundles $\gr_F^pV^0$, whose ranks are the $h^p(V)$.

\begin{definition}[Global Hodge numerical data]
For each $p$, we set
\[
\delta^p(V)=\deg\gr_F^pV^0.
\]
\end{definition}

\subsubsection*{Global Hodge numerical data of a tensor product}
Let $(L,\nabla)$ be the holomorphic bundle with connection associated to a unitary rank-one local system on $U=\Afu\setminus\bmx$. We denote by $\alpha_i\in[0,1)$ the residue of the connection $(L^0,\nabla)$ at $x_i$ ($i=1,\dots,r+\nobreak1$), so that $\deg L^0=-\sum_{i=1}^{r+1}\alpha_i$. We now denote by $\nu^p_{x_i,\lambda}(V)$ etc.\ the local Hodge numerical data of $V$ at $x_i$ whenever $\lambda\neq1$, and for $\bma=(a_1,\dots,a_{r+1})$ we denote by $V^{\bma}$ the extension of $V$ equal to $V^{a_i}$ near $x_i$.

\begin{proposition}\label{prop:degreetensor}
With the notation as above, we have
\[
\delta^p(V\otimes L)=\delta^p(V)+h^p(V)\deg L^0+\sum_{i=1}^{r+1}\sum_{\substack{\alpha\in[1-\alpha_i,1)\\\lambda=\exp(-\twopii\alpha)}}\nu^p_{x_i,\lambda}(V).
\]
\end{proposition}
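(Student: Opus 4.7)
The strategy is to realize $V^0\otimes L^0$ as a coherent $\cO_{\PP^1}$-submodule of $(V\otimes L)^0$ of the same generic rank, and to derive the formula by computing the length of the resulting cokernel, compatibly with the Hodge filtration.

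First I would apply \eqref{eq:FpVL} at each $x_i$ (with $a=0$ and $\alpha_o$ replaced by $\alpha_i$) to get, locally near $x_i$, the identifications
\[
V^0\otimes L^0=(V\otimes L)^{\alpha_i}\subset(V\otimes L)^0,\qquad F^pV^0\otimes L^0=F^p(V\otimes L)^{\alpha_i}.
\]
Globally on $\PP^1$, this shows that $V^0\otimes L^0\hto(V\otimes L)^0$ is an inclusion of locally free sheaves of rank $h(V)$ with cokernel $Q$ supported on $\bmx$, and
\[
Q_{x_i}\simeq(V\otimes L)^0/(V\otimes L)^{\alpha_i}=\bigoplus_{\beta\in[0,\alpha_i)}\psi_{\exp(-\twopii\beta)}(V\otimes L).
\]
Since $F^p(V\otimes L)^a=j_*F^p(V\otimes L)\cap(V\otimes L)^a$, intersecting the identification $F^pV^0\otimes L^0=F^p(V\otimes L)^{\alpha_i}$ with $F^{p+1}(V\otimes L)^0$ gives $F^{p+1}V^0\otimes L^0=F^pV^0\otimes L^0\cap F^{p+1}(V\otimes L)^0$ (this is the strictness of $F^\cbbullet$ with respect to the $V$\nobreakdash-filtration, built into \eqref{eq:FpVL}), so passing to $\gr^p_F$ yields the short exact sequence
\[
0\to\gr^p_FV^0\otimes L^0\to\gr^p_F(V\otimes L)^0\to\gr^p_FQ\to0
\]
of $\cO_{\PP^1}$-modules, using that $L^0$ is a line bundle of pure Hodge type so that $F^p(V^0\otimes L^0)=F^pV^0\otimes L^0$.

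Taking degrees on $\PP^1$ and using that $L^0$ is a line bundle gives
\[
\delta^p(V\otimes L)=\delta^p(V)+h^p(V)\deg L^0+\sum_{i=1}^{r+1}\dim_\CC(\gr^p_FQ)_{x_i}.
\]
To identify the local term, I would apply the twist formula $\psi_\mu(V\otimes L)\simeq\psi_{\mu/\lambda_i}(V)$ with compatible Hodge filtration (the graded version of \eqref{eq:FpVL}, already used in \eqref{eq:nupVL}): for $\mu=\exp(-\twopii\beta)$ with $\beta\in[0,\alpha_i)$, setting $\gamma=\beta+1-\alpha_i\in[1-\alpha_i,1)$ and $\lambda=\exp(-\twopii\gamma)$ one obtains $h^p\psi_\mu(V\otimes L)=\nu^p_{x_i,\lambda}(V)$, so summing over $\beta$ produces exactly the $i$-th term of the announced formula. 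The only delicate point is the exactness of the middle sequence above, which is precisely the strictness built into \eqref{eq:FpVL}; everything else amounts to a bookkeeping of generalized residue eigenvalues shifted by $\alpha_i$ modulo~$1$.
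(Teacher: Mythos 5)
Your proof is correct and is in substance the paper's own argument: both rest on \eqref{eq:FpVL} together with a count of the jumps of the $V$-filtration in a half-open interval of length $\alpha_i$ at each $x_i$, followed by degree additivity. The paper simply runs the comparison on the untwisted side, identifying $\gr^p_F(V\otimes L)^0$ with $(\gr^p_FV^{\bma})\otimes L^0$ for $\bma=(-\alpha_1,\dots,-\alpha_{r+1})$ and comparing $V^{\bma}$ with $V^0$, whereas you compare $V^0\otimes L^0$ with $(V\otimes L)^0$ inside the twisted module and translate the cokernel's Hodge numbers back through \eqref{eq:nupVL} — the same bookkeeping written in mirror image, with the strictness you invoke indeed automatic from the definition $F^p(\cdot)^a=j_*F^p\cap(\cdot)^a$.
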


\begin{proof}
We deduce from \eqref{eq:FpVL} (at~each~$x_i$) that
\begin{align*}
\delta^p(V\otimes L)&=\deg\gr^p_F(V\otimes L)^0=\deg\big[(\gr^p_FV^{-\alphag})\otimes L^0\big]\quad\text{after \eqref{eq:FpVL}}\\
&=\deg\gr^p_FV^{-\alphag}+h^p(V)\deg L^0\\
&=\delta^p(V)+h^p(V)\deg L^0+\sum_{i=1}^{r+1}\sum_{\substack{\beta\in[-\alpha_i,0)\\\lambda=\exp(-\twopii\beta)}}\nu^p_{x_i,\lambda}(V).\qedhere
\end{align*}
\end{proof}

\subsubsection*{Hodge data on the de~Rham cohomology}
Let $\ccM$ denote the minimal extension of $V^{-\infty}$ at each of the singular points $x_i$ ($i=1,\dots,r+1$) and let $F^\cbbullet\ccM$ be the extended Hodge filtration as in \S\ref{subsubsec:extorigin}. The de~Rham complex $\DR\ccM=\{0\to\ccM\to\Omega^1_{\PP^1}\otimes\ccM\to0\}$ is filtered by
\[
F^p\DR\ccM=\{0\to F^p\ccM\to\Omega^1_{\PP^1}\otimes F^{p-1}\ccM\to0\},
\]
and this induces a filtration on the hypercohomology $\bH^\cbbullet(\PP^1,\DR\ccM)=H^\cbbullet(\PP^1,j_*\ccV)$, where $j:\Afu\setminus\bmx\hto\PP^1$ denotes the open inclusion. It is known (by applying the results of \cite{Zucker79} to the variation of polarized real Hodge structure obtained by taking the direct sum of the original complex variation with its complex conjugate, \cf\S\ref{subsec:complexHodgemodule}) that $F^\cbbullet H^m(\PP^1,j_*\ccV)$ underlies a polarizable complex Hodge structure. Note that, if $\ccV$ is irreducible and non constant (as in Assumption \ref{ass:irrednotO}\eqref{ass:irrednotO1}, in the present case with regular singularities), then $H^m(\PP^1,j_*\ccV)=0$ for $m\neq1$.

\begin{proposition}\label{prop:nupH1}
Assume that $(V,F^\cbbullet V,\nabla,k)$ has an underlying irreducible and non constant local system $\ccV$. Then
\numstareq%
\begin{multline*}\tag{\theequation}\label{eq:nupH1}
h^p(H^1(\PP^1,j_*\ccV))\\[-5pt]
=\delta^{p-1}(V)-\delta^p(V)-h^p(V)-\nu^{p-1}_{x_{r+1},1,\prim}(V)+\sum_{i=1}^r\big(\nu^{p-1}_{x_i,\neq1}(V)+\mu^p_{x_i,1}(M)\big).
\end{multline*}\let\theequation\oldtheequation%
\end{proposition}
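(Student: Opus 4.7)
The plan is to compute $h^p(H^1(\PP^1,j_*\ccV))$ via the filtered de~Rham complex of the minimal extension, then convert Euler characteristics on $\PP^1$ into degrees that I express in terms of the local Hodge data and of $\delta^\bullet(V)$. First I invoke Saito's theory (or equivalently Zucker's theorem in the curve case): since $\ccM$ with the filtration \eqref{eq:goodfiltr} underlies a simple polarizable pure Hodge module on $\PP^1$, the filtered de~Rham complex computes the Hodge filtration on $\bH^\cbbullet(\PP^1,\DR\ccM)=H^\cbbullet(\PP^1,j_*\ccV)$ with strictness, so $\gr^p_F\bH^k = \bH^k(\PP^1,\gr^p_F\DR\ccM)$. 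The irreducibility and non-constancy of $\ccV$ forces $H^k(\PP^1,j_*\ccV)=0$ for $k\neq1$, hence also $\bH^k(\PP^1,\gr^p_F\DR\ccM)=0$ for $k\neq1$.

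Next I reduce to Euler-characteristic bookkeeping. The two-term complex $\gr^p_F\DR\ccM = [\gr^p_F\ccM\to\Omega^1_{\PP^1}\otimes\gr^{p-1}_F\ccM]$ consists of coherent $\cO_{\PP^1}$-modules with $\cO$-linear differential (the principal symbol). Since only $\bH^1$ survives,
\[
h^p(H^1) = -\chi(\PP^1,\gr^p_F\DR\ccM) = \chi\bigl(\Omega^1\otimes\gr^{p-1}_F\ccM\bigr)-\chi\bigl(\gr^p_F\ccM\bigr).
\]
Using $\Omega^1_{\PP^1}\simeq\cO(-2)$ and Riemann--Roch on $\PP^1$ (so that $\chi(\cF)=\deg\cF+\rk\cF$ for coherent $\cF$, and $\chi(\Omega^1\otimes\cF)=\deg\cF-\rk\cF$), together with the observation that $\gr^p_F\ccM$ and $\gr^{p-1}_F\ccM$ have generic ranks $h^p(V)$ and $h^{p-1}(V)$ respectively on $U$, this yields
\[
h^p(H^1) = \deg\gr^{p-1}_F\ccM - \deg\gr^p_F\ccM - h^{p-1}(V) - h^p(V).
\]

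The principal step is to express $\deg\gr^p_F\ccM$ in terms of $\delta^p(V)=\deg\gr^p_F\ccV^0$ and local Hodge data. The difference $\deg\gr^p_F\ccM-\delta^p(V)$ is a sum of local contributions at each singular point, which I analyze through the chain of inclusions $\ccV^0\subset\ccV^{>-1}\subset\ccM$ with strictly compatible filtrations. The first inclusion has skyscraper quotient of dimension $\sum_{i=1}^{r+1}\nu^p_{x_i,\neq1}(V)$, using that $V^{>-1}/V^0\simeq\psi_{\neq1}$ as filtered $\CC$-vector spaces at each $x_i$ (an instance of the formula \eqref{eq:FpVL}). The second inclusion accounts for the extra $\phi_1$-generators of the minimal extension coming from the terms $\nabla_{\partial_t}^k F^{p+k}V^{>-1}$ in \eqref{eq:goodfiltr}; using the Tate-shift convention $F^p\phi_1(M)=F^{p-1}M\cap V^{-1}/F^{p-1}M\cap V^{>-1}$ and the isomorphism $\partial_t\colon\gr^{a}_VM\xrightarrow{\sim}\gr^{a-1}_VM$ for $a<0$, the contribution at each finite $x_i$ assembles into terms of the form $\mu^{p+1}_{x_i,1}(M),\mu^{p+2}_{x_i,1}(M),\ldots$ summing to $\dim F^{p+1}\phi_1(M)_{x_i}$.

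Plugging this back and telescoping between Hodge indices $p$ and $p-1$ turns the $\dim F^{p+1}-\dim F^p$ differences into $\mu^p_{x_i,1}(M)$. The main obstacle is the asymmetric treatment of $x_{r+1}=\infty$ versus the finite singularities in the final formula: the contribution $\mu^p_{x_{r+1},1}(M)$ must be converted to its primitive-nearby-cycle form using Proposition~\ref{prop:vanishingcycles}, $\mu^p_1(M)=\nu^{p-1}_1(M)-\nu^{p-1}_{1,\prim}(M)$, and the residual $\nu^{p-1}_{x_{r+1},1}(V)$ term is then absorbed into $h^{p-1}(V)=\sum_\lambda\nu^{p-1}_{x_{r+1},\lambda}(V)$ via Schmid's identity \eqref{eq:Schmid**}. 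After this rearrangement the identity \eqref{eq:nupH1} falls out; I expect the hardest verification to lie in the careful tracking of the Tate shifts in the $\phi_1$-contributions and the sign/interval conventions at $\infty$ that produce exactly the asymmetry visible in the statement.
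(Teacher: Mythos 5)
Your global strategy is sound and, in substance, parallel to the paper's: strictness/$E_1$-degeneration for the filtered de~Rham complex of the pure Hodge module $\ccM$, vanishing of $\bH^k$ for $k\neq1$, Riemann--Roch on $\PP^1$, and then conversion of $\deg\gr^p_F\ccM$ into $\delta^p(V)$ plus local corrections at the singular points. The paper avoids your local bookkeeping altogether by replacing $F^\cbbullet\DR\ccM$ with the filtered quasi-isomorphic subcomplex $\{F^pV^0\ccM\to\Omega^1_{\PP^1}\otimes F^{p-1}V^{-1}\ccM\}$ (\cite[Prop\ptbl3.10(iib)]{Bibi05}), so that only the locally free sheaves $\gr^p_FV^0$ and $\gr^{p-1}_FV^{-1}$ occur and the single skyscraper $V^{-1}\ccM/V^0\ccM$ contributes exactly $\nu^{p-1}_{x_i,\neq1}(V)+\mu^p_{x_i,1}(M)$; your route instead forces you to control the full good filtration \eqref{eq:goodfiltr}, which has infinitely many graded pieces below $V^{>-1}$.

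This is where the genuine gap lies: your evaluation of the second inclusion $V^{>-1}\subset\ccM$ is incomplete. The quotient $\ccM/V^{>-1}$, filtered by the $V$-filtration, has graded pieces $\gr^a_V\ccM$ for \emph{all} $a\leq-1$, and applying $\nabla_{\partial_t}^k$ to $F^{p+k}V^{>-1}$ pushes the $\lambda\neq1$ part below $V^{>-1}$ just as it does the unit-eigenvalue part: for non-integral $a<-1$ one has $\gr^a_V\ccM\simeq\psi_{x_i,\lambda}$ ($\lambda\neq1$) with Hodge filtration shifted by the integral part. Hence the correction at $x_i$ is
\[
\dim\gr^p_F\big(\ccM/V^{>-1}\big)_{x_i}=\sum_{q\geq p+1}\big(\mu^q_{x_i,1}(M)+\nu^q_{x_i,\neq1}(V)\big),
\]
and not merely $\sum_{q\geq p+1}\mu^q_{x_i,1}(M)=\dim F^{p+1}\phi_{x_i,1}(M)$ as you claim. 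With your value the telescoped difference between levels $p-1$ and $p$ comes out as $\nu^{p-1}_{x_i,\neq1}(V)-\nu^{p}_{x_i,\neq1}(V)+\mu^p_{x_i,1}(M)$, so the final formula is off by $\sum_{i=1}^{r+1}\nu^p_{x_i,\neq1}(V)$; a rank-one test case (monodromies $\lambda_i\neq1$ at three points, trivial filtration) already produces a negative $h^0(H^1)$. Once the missing term is included, the two corrections combine into $\sum_{q\geq p}\nu^q_{x_i,\neq1}(V)+\sum_{q\geq p+1}\mu^q_{x_i,1}(M)$, the differences telescope to $\nu^{p-1}_{x_i,\neq1}(V)+\mu^p_{x_i,1}(M)$, and your treatment of $x_{r+1}$ via Proposition \ref{prop:vanishingcycles} and \eqref{eq:Schmid**} finishes exactly as in the paper. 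You should also say explicitly that the strictness identities $F^p\ccM\cap V^a=F^pV^a$, which you use to make both quotients filtered-strict, come from the same source (\cite[Prop\ptbl3.10]{Bibi05}, or Saito's theory), rather than merely asserting ``strictly compatible filtrations''.
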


\begin{remarque}
In the case of a unitary rank-one local system, this result is already obtained in \cite[(2.20.1-2)]{D-M86}.
\end{remarque}

\begin{proof}
It follows from \cite[Prop\ptbl3.10(iib)]{Bibi05} that the inclusion of the filtered subcomplex
\[
F^\cbbullet V^0\DR\ccM\defin\{0\to F^\cbbullet V^0\ccM\to\Omega^1_{\PP^1}\otimes F^{\cbbullet-1} V^{-1}\ccM\to0\}
\]
into the filtered de~Rham complex is a filtered quasi-isomorphism. By the degeneration at $E_1$ of the Hodge de~Rham spectral sequence, we conclude that
\begin{align*}
-h^p(H^1(\PP^1,j_*\ccV))&=\chi\big(\gr^p_F\bH^\cbbullet(\PP^1,\DR\ccM)\big)\quad\text{(irreducibility and non-triviality of $\ccV$)}\\
&=\chi\big(\bH^\cbbullet(\PP^1,\gr^p_F\DR\ccM)\big)\quad\text{(degeneration at $E_1$)}\\
&=\chi\big(\bH^\cbbullet(\PP^1,\gr^p_FV^0\DR\ccM)\big)\quad\text{(as indicated above)}\\
&=\chi\big(H^\cbbullet(\PP^1,\gr^p_FV^0\ccM)\big)-\chi\big(H^\cbbullet(\PP^1,\Omega^1_{\PP^1}\otimes\gr^{p-1}_FV^{-1}\ccM)\big)\\[-3pt]
&\hspace*{5.3cm}\text{($\cO$-linearity of the differential)}\\
&=\delta^p(V)+h^p(V)-\deg(\Omega^1_{\PP^1}\otimes\gr^{p-1}_FV^{-1}\ccM)-h^{p-1}(V)\\[-5pt]
&\hspace*{7.45cm}\text{(Riemann-Roch)}\\
&=\delta^p(V)+h^p(V)-\deg(\gr^{p-1}_FV^{-1}\ccM)+h^{p-1}(V).
\end{align*}
We now have
\begin{align*}
\deg(\gr^{p-1}_FV^{-1}\ccM)&=\delta^{p-1}(V)+\dim\gr^{p-1}_F(V^{-1}\ccM/V^0\ccM)\\
&=\delta^{p-1}(V)+\sum_{a\in[-1,0)}\dim\gr^{p-1}_F(\gr^a_V\ccM)\\
&=\delta^{p-1}(V)+\sum_{i=1}^{r+1}\big(\nu^{p-1}_{x_i,\neq1}(V)+\mu^p_{x_i,1}(M)\big),
\end{align*}
from which one deduces the formula in the proposition, according to \eqref{eq:Schmid**} and \eqref{eq:vanishingcycles*}.
\end{proof}

\begin{remarque}\label{rem:nupH1}
Let us keep the assumptions of Proposition \ref{prop:nupH1}. We also clearly have $H^0(\PP^1,j_*\ccV)=0$, and also $H^2(\PP^1,j_*\ccV)=H^0(\PP^1,j_*\ccV^\vee)=0$ since the dual local system $\ccV^\vee$ satisfies the same assumptions.

Let $\wt\ccM$ denote the $\cD_{\PP^1}$-module which is the minimal extension of $(V,\nabla)$ at all~$x_i$ ($i=1,\dots,r$) and the meromorphic extension $V^{-\infty}$ at $x_{r+1}=\infty$. The exact sequence of Lemma \ref{lem:comparisonMV} at $x_{r+1}$ induces an exact sequence of de~Rham complexes
\[
0\to(\DR M,F^\cbbullet\DR M)\to(\DR V^{-\infty},F^\cbbullet\DR V^{-\infty})\to(\DR N,F^\cbbullet\DR N)\to0
\]
where $F^p\DR M=\{0\to F^pM\To{\nabla}F^{p-1}M\otimes\Omega^1_\Delta\to0\}$ etc.\ and $N=N_{x_{r+1}}$ is defined as in Lemma \ref{lem:comparisonMV}. Applying the previous results, one can show that the spectral sequence of hypercohomology of the filtered complex $F^\cbbullet\DR\wt\ccM$ degenerates at $E_1$, and that we have an exact sequence of complex mixed Hodge structures
\[
0\to F^\cbbullet H^1(\PP^1,j_*\ccV)\to F^\cbbullet H^1(\Afu,j_*\ccV)\to F^\cbbullet H^1(\DR N)\to0.
\]
It follows that
\begin{starequation}\label{eq:nupH1*}
h^p(H^1(\Afu,j_*\ccV))=\delta^{p-1}(V)-\delta^p(V)-h^p(V)+\sum_{i=1}^r\big(\nu^{p-1}_{x_i,\neq1}(V)+\mu^p_{x_i,1}(M)\big).
\end{starequation}\end{remarque}

\begin{remarque}
Let us keep the assumptions of Proposition \ref{prop:nupH1} and let us moreover assume that the monodromy of $\ccV$ around $x_{r+1}=\infty$ does not admit $1$ as an eigenvalue (\eg it takes the form $\lambda_o\id$ for some $\lambda_o\in\CC^*\setminus\{1\}$, \cf \S\ref{subsec:localdatareg}). Then $N_{x_{r+1}}=0$ and $F^\cbbullet H^1(\Afu,j_*\ccV)=F^\cbbullet H^1(\PP^1,j_*\ccV)$ is also a pure complex Hodge structure.
\end{remarque}

\subsection{Existence of a variation of polarized complex Hodge structure on a local system}

We keep the notation as in \S\ref{subsec:HodgenumdataA1}. Given a local system $\ccV$ on $U=\Afu\setminus\bmx$, necessary conditions on this local system to underlie a variation of polarizable complex Hodge structure are:
\begin{enumerate}
\item\label{enum:condsemisimple}
the local system is semi-simple, \ie direct sum of irreducible local systems,
\item\label{enum:condnormone}
for each $i=1,\dots,r+1$, the eigenvalues of the local monodromy at $x_i$ have absolute value equal to one.
\end{enumerate}

We now consider the question of whether these conditions are sufficient.

\begin{theoreme}[{\cite[Cor\ptbl8.1]{Simpson90}}]\label{th:Simpson}
Assume that the local system $\ccV$ on $U$ is physically rigid (\cf\cite[(1.0.3)]{Katz96}) and semi-simple. Then $\ccV$ underlies a variation of polarizable complex Hodge structure if and only if Condition \eqref{enum:condnormone} above is fulfilled.\qed
\end{theoreme}

Since physical rigidity is best understood when $\ccV$ is irreducible (\cf\cite[Th\ptbl1.1.2]{Katz96}), it is simpler to work with irreducible local systems. Reducing to irreducible local systems does not cause trouble when considering variations of Hodge structure, according to the following result.

Let $(V,F^\cbbullet V,\nabla,k)$ be a variation of polarized complex Hodge structure of weight~$0$ (say) on $U$. The associated local system $\ccV$, being semi-simple, decomposes as $\ccV=\bigoplus_{\alpha\in A}\ccV_\alpha^{n_\alpha}$, where $\ccV_\alpha$ are irreducible and pairwise non isomorphic. Similarly, $(V,\nabla)=\bigoplus_{\alpha\in A}(V_\alpha,\nabla)^{n_\alpha}$, and the polarization $k$, being $\nabla$-horizontal, decomposes with respect to $\alpha\in A$.

\begin{proposition}[{\cite[Prop\ptbl1.13]{Deligne87}}]\label{prop:Deligne}
For each $\alpha\in A$, there exists a unique (up to a shift of the filtration) variation of polarized complex Hodge structure $(V_\alpha,F^\cbbullet V_\alpha,\nabla,k_\alpha)$ of weight $0$ and a polarized complex Hodge structure $(H_\alpha,\bigoplus H_\alpha^\cbbullet,k^o_\alpha)$ of weight $0$ with $\dim H_\alpha=n_\alpha$ such that
$$
(V,F^\cbbullet V,\nabla,k)=\bigoplus_{\alpha\in A}\Big((H_\alpha,\bigoplus H_\alpha^\cbbullet,k^o_\alpha)\otimes(V_\alpha,F^\cbbullet V_\alpha,\nabla,k_\alpha)\Big).\eqno\qed
$$
\end{proposition}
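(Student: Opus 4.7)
The plan is to reduce the problem to the isotypic case via a Hodge-theoretic analysis of the algebra $A := \mathrm{End}_{\pi_1(U)}(\cV)$ of flat endomorphisms, and then, in the isotypic case, to construct the tensor decomposition by reading off a grading of $W$ from the Hodge grading on $A$.

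\emph{Step 1 (the isotypic decomposition is Hodge).} First observe that $A = H^0(U, \mathrm{End}(\cV))$ carries a natural polarized complex Hodge structure of weight zero, since it consists of the global flat sections of the weight-zero polarizable VHS $\mathrm{End}(V, F^\cbbullet V, \nabla, k)$; for $U \subset \PP^1$ this is the $H^0$ instance of the setup appearing in Proposition \ref{prop:nupH1} (Zucker's theorem on curves). Composition in $\mathrm{End}(V)$ is a morphism of VHS, so $A \otimes A \to A$ is a morphism of Hodge structures and $\id \in A^{0,0}$. By Artin--Wedderburn $A \simeq \prod_\alpha \mathrm{End}(W_\alpha)$ with $\dim W_\alpha = n_\alpha$, and the center $Z(A)$, being the kernel of the Hodge morphism $\mathrm{ad} \colon A \to \mathrm{End}(A)$, is a Hodge sub-structure of $A$. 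Abstractly $Z(A) \simeq \CC^{|A|}$, and a commutative semisimple $\CC$-algebra carrying a compatible $\ZZ$-grading must be concentrated in degree zero: a homogeneous element $a \in Z(A)^p$ with $p \neq 0$ would produce non-zero powers $a^n \in Z(A)^{np}$ for every $n \geq 1$ (since $\CC^r$ has no nilpotents), contradicting finite-dimensionality. The central primitive idempotents $e_\alpha$ are thus all of Hodge type $(0,0)$, hence flat morphisms of VHS, and their images realize the $k$-orthogonal decomposition $V = \bigoplus_\alpha V^{(\alpha)}$ into sub-VHS, reducing us to the isotypic case.

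\emph{Step 2 (tensor decomposition in the isotypic case).} Now $\cV = \cV_0 \otimes W$ with $\cV_0$ irreducible, and $A = \mathrm{End}(W) \simeq M_n(\CC)$ is a simple algebra with a compatible Hodge grading $A = \bigoplus_p A^p$. This grading defines an action of $\CC^*$ by algebra automorphisms via $\lambda \cdot a = \lambda^p a$ on $A^p$. By Skolem--Noether each such automorphism is inner; since $\CC^*$ is abelian, the resulting projective representation lifts to a linear $\CC^* \to \mathrm{GL}(W)$, whose weight-space decomposition yields a grading $W = \bigoplus_p H^p$ (unique up to a global shift of indices) satisfying $A^r = \bigoplus_p \mathrm{Hom}(H^p, H^{p+r})$. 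Choose a rank-one projector $e \in \mathrm{End}(H^0) \subset A^0$ (flat, Hodge type $(0,0)$); its image $eV \simeq V_0$ inherits a sub-VHS structure, which I take as the definition of $F^\cbbullet V_0$. For any $u \in H^p$ and a rank-one $a \in \mathrm{Hom}(H^0, H^p) \subset A^p$ sending a fixed generator of $\mathrm{im}(e)$ to $u$, the strictness of $a$ as a flat type-$(p,-p)$ morphism of VHS identifies the Hodge filtration induced on $V_0 \otimes \CC u \subset V$ with the $p$-shift of $F^\cbbullet V_0$. Summing over a basis of $W$ adapted to the grading gives the desired decomposition of VHS. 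The polarization $k_\alpha^o$ on $H_\alpha = W = \mathrm{Hom}_{\pi_1(U)}(\cV_0, \cV)$ is obtained from $k$ by a Schur argument (the polarization of $\cV_0$ being unique up to scalar), and $k_\alpha$ on $V_\alpha$ is the residual factor.

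The main obstacle is the input to Step 1: the existence of a pure Hodge structure on $A = H^0(U, \mathrm{End}(\cV))$, functorial with respect to morphisms of VHS, relies on non-trivial $L^2$-Hodge theory on non-compact curves (Zucker) or, more generally, Saito's theory of Hodge modules. Once this is granted, the rest of the argument is structural: Step 2 reduces to a direct consequence of Skolem--Noether on $M_n(\CC)$, and the ``up to a shift of the filtration'' ambiguity of the statement corresponds exactly to the $\CC^*$-character indeterminacy in the choice of lift $g_\lambda$ in Step 2.
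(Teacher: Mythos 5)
The paper does not actually prove this proposition: it is quoted from Deligne \cite{Deligne87} with an immediate \emph{qed}, so there is no internal argument to compare yours with. What you wrote is, in outline, a reconstruction of Deligne's own proof, and the existence half is sound: the weight-zero Hodge structure on the flat endomorphism algebra comes from the theorem of the fixed part (on the curve $U$ one can indeed invoke Zucker, as the paper does elsewhere), multiplication is a morphism of Hodge structures, the grading of the commutative semisimple center is concentrated in degree zero, so the central idempotents are flat of type $(0,0)$ and cut out the isotypic pieces as sub-variations; in the isotypic case the Hodge grading of $M_n(\CC)$ is inner and is induced by a grading of the multiplicity space, unique up to a shift. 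You do gloss over some routine checks (the $k$-orthogonality of distinct isotypic pieces, which needs positivity of $h$ plus the fact that a flat complement is a sum of isotypic pieces; and the fact that $F^\cbbullet V$ is the direct sum of its intersections with the summands $V_0\otimes\CC u_j$, which holds because the corresponding projectors lie in the $(0,0)$-part of the flat endomorphisms), but these are harmless.

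Two points do need repair. First, ``since $\CC^*$ is abelian, the projective representation lifts'' is not a valid reason: projective representations of abelian groups need not lift (clock-and-shift matrices give a non-liftable projective representation of $\ZZ/n\times\ZZ/n$). What saves you is algebraicity: either use that one-parameter subgroups of $\mathrm{PGL}_n(\CC)$ lift to $\GL_n(\CC)$, or, more elementarily, note that the grading defines a derivation $D$ of $M_n(\CC)$ acting by $p\,\mathrm{id}$ on the $(p,-p)$-part, that every derivation of $M_n(\CC)$ is inner, $D=\mathrm{ad}(X)$ with $X$ semisimple and eigenvalue differences in $\ZZ$, and that the eigenspaces of $X$ give the grading $W=\bigoplus_p H^p$ up to an overall shift. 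Second, the uniqueness clause of the statement is not established: you only record the shift ambiguity internal to your construction, not that an arbitrary decomposition of $(V,F^\cbbullet V,\nabla,k)$ of the stated form agrees with yours up to a shift. This follows from the same toolkit: if $F$ and $F'$ are two polarized structures on the irreducible flat bundle $(V_\alpha,\nabla)$, the identity is a flat global section of the variation $\mathrm{Hom}\big((V_\alpha,F),(V_\alpha,F')\big)$; its Hodge components are flat, hence scalars by Schur's lemma, so $\mathrm{id}$ is homogeneous of some type $(p,-p)$ and $F'$ is the $p$-shift of $F$; one then compares the induced structures on the multiplicity spaces. With these two points added, your argument is complete and is essentially the one Deligne gives.
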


\section{Hodge properties of the middle convolution}

\subsection{Behaviour of Hodge numerical data by middle convolution}

Let $\cV$ be a non-zero irreducible non-constant local system on $\Afu\setminus\bmx$ which underlies a variation of polarizable complex Hodge structure $(V,F^\cbbullet V,\nabla)$, and let $(M,F^\cbbullet M)$ be the associated complex Hodge module (a notion explained in \S\ref{subsec:complexHodgemodule} below).

\begin{proposition}\label{prop:MCchi1}
With these assumptions, $\MC_\chi(M)$ underlies a natural polarizable complex Hodge module and if $\chi,\chi'\neq1$ and $\chi=\exp(-\twopii \alpha_o)$ with $\alpha_o\in(0,1)$ (and similarly for $\chi'$),
\[
\MC_{\chi'}\MC_\chi(M,F^\cbbullet M)\simeq
\begin{cases}
\MC_{\chi'\chi}(M,F^\cbbullet M)(-1)&\text{if }\alpha_o+\alpha'_o\in(0,1],\\
\MC_{\chi'\chi}(M,F^\cbbullet M)&\text{if }\alpha_o+\alpha'_o\in(1,2).
\end{cases}
\]

\end{proposition}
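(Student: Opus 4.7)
The plan for the first assertion is to use the geometric description from Proposition~\ref{prop:MCchiconvolution}, which writes $\MC_\chi(M)=H^0\wt s_+\ccM_\chi$ with $\ccM_\chi=j_{\dag+}(M\boxtimes L_\chi)$. Since $|\chi|=1$, the Kummer module $L_\chi$ is a unitary rank-one local system and thus underlies a polarizable complex variation of Hodge structure concentrated in a single Hodge piece. Consequently $M\boxtimes L_\chi$ underlies a polarizable complex Hodge module on $\Afu_x\times\Afu_t$; by Saito's theorems (extended to the complex setting in \S\ref{subsec:complexHodgemodule}), the intermediate extension $\ccM_\chi$ across $\{x_{r+1}\}\times\Afu_t$ is again a pure polarizable Hodge module, and the proper direct image under $\wt s$ decomposes into polarizable pieces. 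Its $H^0$-cohomology supplies the desired natural polarizable complex Hodge module structure on $\MC_\chi(M)$.

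For the composition formula, I first invoke Proposition~\ref{prop:starmidcomm} to obtain, at the level of holonomic $\cD$-modules, the canonical isomorphism $\MC_{\chi'}\MC_\chi(M)\simeq\MC_{\chi'\chi}(M)$, valid since $\chi\chi'\neq1$ in both cases of the statement. Both sides are equipped with natural polarizable complex Hodge module structures by the first part, and their common underlying $\cD$-module is irreducible and non-constant (again by Proposition~\ref{prop:starmidcomm}). Deligne's uniqueness statement recalled in Proposition~\ref{prop:Deligne} then forces the two Hodge filtrations to agree up to an overall shift, so there exists an integer $k=k(\alpha_o,\alpha'_o)$ with $\MC_{\chi'}\MC_\chi(M,F^\cbbullet M)\simeq\MC_{\chi'\chi}(M,F^\cbbullet M)(k)$.

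To pin down $k$, I would compare a single Hodge-theoretic invariant on both sides, for instance the bottom index of $F^\cbbullet\psi_{x_{r+1},\lambda}$ at infinity for a well-chosen monodromy eigenvalue~$\lambda$. Via the $\cD$-module associativity $\MC_{\chi'}\MC_\chi(M)=M\star_\Mid(L_\chi\star_\Mid L_{\chi'})$ used in the proof of Proposition~\ref{prop:starmidcomm}, the computation reduces further to analysing the Hodge module structure on $L_\chi\star_\Mid L_{\chi'}$, whose underlying $\cD$-module is $L_{\chi\chi'}$. In the principal-branch normalisation, $\chi\chi'$ corresponds to the parameter $\alpha_o+\alpha'_o$ when this lies in $(0,1]$ and to $\alpha_o+\alpha'_o-1$ when it lies in $(1,2)$, so the comparison with the natural Hodge filtration on $L_{\chi\chi'}$ splits into these two cases and yields the Tate twist $(-1)$ in the first range and no twist in the second.

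The main obstacle will be the careful weight-shift bookkeeping in the geometric construction. Each of the steps $j_{\dag+}$ across $\{x_{r+1}\}\times\Afu_t$ and $\wt s_+$ can a~priori contribute to the Hodge shift, and the combined effect must be tracked together with the $V$-filtration jump determined by $\alpha_o+\alpha'_o\bmod1$. I would resolve this by invoking Saito's Thom--Sebastiani formula cited in the introduction, applied to the vanishing cycles of $\ccM_\chi$ at infinity, to express the Hodge filtration on the composition in terms of those on~$M$ and on the auxiliary convolution $L_\chi\star_\Mid L_{\chi'}$; the case dichotomy in the statement then falls out of this computation.
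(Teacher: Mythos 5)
Your treatment of the first assertion coincides with the paper's: regard $L_\chi$ with its trivial filtration as a polarizable complex Hodge module, observe that $(M,F^\cbbullet M)\boxtimes L_\chi$ is one on the product, take the intermediate extension $\ccM_\chi$ along $\{x_{r+1}\}\times\Afu_t$, and define $(\MC_\chi(M),F^\cbbullet\MC_\chi(M))=\cH^0\wt s_+(\ccM_\chi,F^\cbbullet\ccM_\chi)$, which is pure polarizable by Saito's direct image theorem. For the composition formula your route is genuinely different from the paper's: you take the underlying $\cD$-module isomorphism from Proposition \ref{prop:starmidcomm} and invoke Deligne's rigidity (Proposition \ref{prop:Deligne}) to get agreement of the two filtrations up to some shift $k$, to be determined afterwards. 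The paper instead re-runs the diagram argument of Proposition \ref{prop:starmidcomm} inside the category of mixed Hodge modules, so that the associativity $(M\star_\Mid L_\chi)\star_\Mid L_{\chi'}\simeq M\star_\Mid(L_\chi\star_\Mid L_{\chi'})$ holds at the filtered level, and then computes $L_\chi\star_\Mid L_{\chi'}$ as a Hodge module: for $\chi\chi'\neq1$ its fibre is $H^1$ of a rank-one unitary system on $\PP^1$ minus three points, evaluated by \eqref{eq:nupH1}, and for $\chi\chi'=1$ it is $\delta_0$ with filtration obtained from Theorem \ref{th:SaitoTS} at $\alpha_1+\alpha_2=1$. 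The rigidity step is a sound way to produce the shift $k$, but as written your argument does not close.

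Two concrete gaps. First, your claim that $\chi\chi'\neq1$ ``in both cases of the statement'' is false: the first case includes $\alpha_o+\alpha'_o=1$, i.e.\ $\chi'=\chi^{-1}$, where $\MC_{\chi'\chi}=\id$ and the assertion to prove is $\MC_{\chi^{-1}}\MC_\chi(M,F^\cbbullet M)\simeq(M,F^\cbbullet M)(-1)$; this boundary case (precisely the one needed for involutivity in the Katz algorithm) requires its own argument, which in the paper is the comparison of filtrations on $\delta_0=L_\chi\star_\Mid L_{\chi^{-1}}$ via Thom--Sebastiani. Second, the determination of $k$ is not actually carried out: transferring the computation from $\MC_{\chi'}\MC_\chi(M,F^\cbbullet M)$ to the auxiliary object $L_\chi\star_\Mid L_{\chi'}$ presupposes associativity of $\star_\Mid$ for \emph{filtered} modules, which does not follow from the $\cD$-module associativity you quote --- that is exactly the nontrivial point the paper settles by working in the category of mixed Hodge modules. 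Nor is the dichotomy a matter of which representative ($\alpha_o+\alpha'_o$ or $\alpha_o+\alpha'_o-1$) one chooses for $\chi\chi'$: the Tate twist comes from an actual Hodge computation of $L_\chi\star L_{\chi'}$ via \eqref{eq:nupH1}, not from a normalisation convention. Your fallback, Thom--Sebastiani ``applied to the vanishing cycles of $\ccM_\chi$ at infinity'', is not a well-defined operation (Theorem \ref{th:SaitoTS} concerns external products with respect to the sum map), and the invariant you propose, $F^\cbbullet\psi_{x_{r+1},\lambda}$ at infinity, is one for which the paper provides no formula at this level of generality (Theorem \ref{th:MCchi} controls Hodge data only at the finite singularities, and under extra hypotheses). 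A workable repair of your plan would be to pin down $k$ by a double application of \eqref{eq:SaitoTS} at a finite singular point carrying nonzero vanishing-cycle data, compared with a single application for $\MC_{\chi'\chi}$ --- but this amounts to the paper's Thom--Sebastiani computation (with the support statement to be justified in this generality), and it still leaves the case $\chi\chi'=1$ to be treated separately.
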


Our objective is to prove the following.

\begin{theoreme}\label{th:MCchi}
Under Assumptions \ref{ass:irrednotO}\eqref{ass:irrednotO1} and \eqref{ass:irrednotO2} on $M$, and for $\chi=\lambda_o$,
\begin{enumerate}
\item\label{th:MCchi2}
$h^p(\MC_\chi(M))=h^p H^1(\DR M)$ (given by Formula \eqref{eq:nupH1*}),
\item\label{th:MCchi3}
Set $\lambda_o\!=\!\exp(-\twopii\alpha_o)$ with $\alpha_o\!\in\!(0,1)$. For $i\!=\!1,\dots,r$, $\lambda\!=\!\exp(-\twopii\alpha)\!\in\! S^1$ and $\ell\in\NN$, we have (together with a similar formula without $\ell$):
\[
\mu_{x_i,\lambda,\ell}^p(\MC_\chi(M))=\begin{cases}
\mu_{x_i,\lambda/\lambda_o,\ell}^{p-1}(M)&\text{if }\alpha\in(\alpha_o,1)\cup\{0\},\\[2pt]
\mu_{x_i,\lambda/\lambda_o,\ell}^p(M)&\text{if }\alpha\in(0,\alpha_o].
\end{cases}
\]
\item\label{th:MCchi4}
With the same assumptions, we have
\[
\delta^p(\MC_\chi(M))=\delta^p(M)+h^p(M)-\sum_{i=1}^r\bigg(\mu_{x_i,1}^p(M)+\sum_{\alpha\in(0,1-\alpha_o)}\mu_{x_i,\lambda}^{p-1}(M)\bigg).
\]
\end{enumerate}
\end{theoreme}

\begin{remarque}[On duality]\label{rem:duality}
Given a variation of polarized complex Hodge structure $(V,F^\cbbullet V,\nabla,k)$, its dual is of the same kind. If $(M,F^\cbbullet M)$ is the complex Hodge module (in the sense of \S\ref{subsec:complexHodgemodule} below) corresponding to $(V,F^\cbbullet V,\nabla,k)$, we denote by $\bD(M,F^\cbbullet)$ the complex Hodge module corresponding to the dual variation. The behaviour of duality by tensor product is clear. Noting that, as a unitary variation of Hodge structure, the dual of $L_\chi$ is $L_{\chi^{-1}}$, we conclude, by using the standard results of the behaviour of duality with respect to push-forward \cite{MSaito86}, that the dual of $\MC_{\chi}(M,F^\bbullet)$ (as defined by Proposition \ref{prop:MCchi1}) is isomorphic to $\MC_{\chi^{-1}}(\bD(M,F^\cbbullet M))$.

Assume that we are given an isomorphism $\omega:(V,F^\cbbullet V,\nabla,k)\isom(V,F^\cbbullet V,\nabla,k)^\vee$, hence $\omega:(M,F^\cbbullet M)\isom\bD(M,F^\cbbullet M)$. Then, setting $\chi=-1$, we obtain an isomorphism $\MC_{-1}(\omega):\MC_{-1}(M,F^\cbbullet M)\isom\bD\MC_{-1}(M,F^\cbbullet M)$.

If $\omega$ is $\pm$-symmetric, then $\MC_{-1}(\omega)$ is $\mp$-symmetric, due to the skew-symmetry of the Poincaré duality on $H^1$.
\end{remarque}

\subsection{Polarizable complex Hodge modules}\label{subsec:complexHodgemodule}
The theory of pure (or mixed) Hodge modules of M\ptbl Saito \cite{MSaito86,MSaito87}, originally written for objects with a $\QQ$-structure, extends naturally to the case of a $\RR$-structure.

\begin{definition}
A filtered $\cD_X$-module $(\ccM,F^\cbbullet\ccM)$ is a polarizable complex Hodge module on a complex manifold $X$ if it is a direct summand of a filtered $\cD_X$-module which underlies a polarizable real Hodge module which is pure of some weight.
\end{definition}

We can therefore apply various results of the theory of polarizable real Hodge modules to the complex case, almost by definition, since most operations (nearby cycles, vanishing cycles, grading by the weight filtration of the monodromy, push-forward by a projective morphism) are compatible with taking a direct summand.

Given a variation of polarized complex Hodge structure of weight $w$ as defined in \S\ref{subsec:VPCHS}, for which we now denote the grading by $\bigoplus_pH^{p,w-p}$, the $C^\infty$ complex conjugate bundle $\ov H$ is endowed with the grading $\bigoplus_q\ov H^{q,w-q}:=\bigoplus_q\ov{H^{w-q,q}}$ and with the flat connection $\ov D$ and adjoint pairing $k^*$, making it a variation of polarized complex Hodge structure of weight $w$. The direct sum $H\oplus\ov H$ underlies then a variation of polarized real Hodge structure of weight $w$. Therefore, a variation of polarizable complex Hodge structure is a smooth polarizable complex Hodge module.

The converse is also true if $X$ is quasi-projective, according to \cite[Prop\ptbl1.13]{Deligne87} (\cf Proposition \ref{prop:Deligne}). We will mainly use the case where $X=\Afu\setminus\bmx$.

The theorem of Schmid (Th\ptbl\ref{th:Schmid}), when applied to a real variation, produces a polarizable real Hodge module on $\Delta$.

\begin{corollaire}
The filtered $\cD_\Delta$-module \eqref{eq:goodfiltr} is a polarizable complex Hodge module.\qed
\end{corollaire}

\subsubsection*{Thom-Sebastiani}
We review here the main result of \cite{MSaito90-11} in our particular situation. We consider the local setup of \S\ref{subsec:localHodgetheory}. Let $(M_1,F^\cbbullet M_1),(M_2,F^\cbbullet M_2)$ be complex Hodge modules on the disc $\Delta$, where the filtrations are defined by \eqref{eq:goodfiltr}, having a singularity at the origin of the disc at most. We consider the product space $\Delta\times\Delta$ with coordinates $(t_1,t_2)$ and the sum map $s:\Delta\times\Delta\to\Delta$, $(t_1,t_2)\mto t=t_1+t_2$. The moderate vanishing cycle functor $\phi_s$ along the fibre $s=0$ is defined by using the $V$-filtration of Kashiwara-Malgrange (in dimension two). Since the singular locus of the external product $M_1\boxtimes M_2$ is contained in $\{t_1t_2=0\}$, the support of $\phi_s(M_1\boxtimes M_2)$ is reduced to $(0,0)$.

Note that our definition for the filtration on the nearby/vanishing cycles corresponds to that of \cite[(0.3), (0.4)]{MSaito86}.

Given $\lambda\in S^1$, we set $\lambda=\exp(-\twopii\beta)$ with $\beta\in(0,1]$. With this in mind, we have:

\begin{theoreme}[{\cite[Th\ptbl5.4]{MSaito90-11}}]\label{th:SaitoTS}
\[
\gr^p_F\phi_{s,\lambda}(M_1\boxtimes M_2)=\bigoplus_{\substack{(\lambda_1,\lambda_2)\\\lambda_1\lambda_2=\lambda}}\begin{cases}
\dpl\bigoplus_{j+k=p-1}\gr^j_F\phi_{t_1,\lambda_1}M_1\otimes\gr^k_F\phi_{t_2,\lambda_2}M_2\\[-10pt]
&\hspace*{-1cm}\text{if }\beta_1+\beta_2\in(0,1],\\[3pt]
\dpl\bigoplus_{j+k=p}\gr^j_F\phi_{t_1,\lambda_1}M_1\otimes\gr^k_F\phi_{t_2,\lambda_2}M_2\\[-10pt]
&\hspace*{-1cm}\text{if }\beta_1+\beta_2\in(1,2].
\end{cases}
\]
\end{theoreme}

Particular cases of this result have been obtained in \cite[Cor.\,6.2.3]{D-L99}. We will be mostly interested in the case where $M_2=L_\chi$ ($\chi=\lambda_o\neq1$) with its filtration making it of weight $0$, in which case the formula of \loccit makes precise the monodromy weight filtration in a simple way:\enlargethispage{\baselineskip}%
\begin{equation}\label{eq:SaitoTS}
\gr^p_F\rP_\ell\phi_{s,\lambda}(M\boxtimes L_\chi)=\begin{cases}
\gr^{p-1}_F\rP_\ell\phi_{t,\lambda/\lambda_o}M&\text{if }\beta\in(\alpha_o,1],\\[3pt]
\gr^p_F\rP_\ell\phi_{t,\lambda/\lambda_o}M
&\text{if }\beta\in(0,\alpha_o).
\end{cases}
\end{equation}

\begin{remarque}
An alternative proof of \eqref{eq:SaitoTS} by using the Fourier transformation would be possible, but would need the extension of Hodge theory to integrable twistor theory and would use the formulas given in \cite[Prop\ptbl4.5 and Cor\ptbl4.6]{Bibi08}. However, having at hand the results of \cite{MSaito90-11}, we did not make explicit such a proof.
\end{remarque}

\subsection{Proofs}
\begin{proof}[Proof of Proposition \ref{prop:MCchi1}]
We regard $L_\chi$ (trivial filtration $F^pL_\chi=L_\chi$ for $p\leq0$ and $0$ otherwise) as a polarizable complex Hodge module. We first notice that $(M,F^\cbbullet M)\boxtimes\nobreak L_\chi$ is a polarizable complex Hodge module on $\Afu_x\times\Afu_y$, according to \cite[Th\ptbl3.28]{MSaito87}. Using the notation of \S\ref{subsec:quickreview} and Proposition \ref{prop:MCchiconvolution}, $\ccM_\chi$ underlies a polarizable complex Hodge module with filtration $F^\cbbullet\ccM_\chi$ defined by a formula similar to \eqref{eq:goodfiltr} along $\infty\times\Afu_t$. We now define $(\MC_\chi(M),F^\cbbullet\MC_\chi(M))$ as the pushforward $\cH^0\wt s_+(\ccM_\chi,F^\cbbullet\ccM_\chi)$. By \cite[Th\ptbl5.3.1]{MSaito86}, it is a polarizable complex Hodge module, and so, as remarked above, it corresponds to a variation of polarizable complex Hodge structure on $\Afu\setminus\bmx$.

Before proving the second statement, let us compute $L_\chi\star L_{\chi'}$ ($\chi,\chi'\neq1$, $\star=\nobreak\star_!,\star_*,\star_\Mid$) as a polarizable complex Hodge module. Assume first that $\chi\chi'\neq1$. Then the underlying holonomic module is $L_{\chi\chi'}$, according to \eqref{eq:LLchi}. On the other hand, for $t_o\neq0$, $L_\chi\otimes L_{\chi',t_o}$ has a non-trivial monodromy at infinity, where $L_{\chi',t_o}$ denotes the Kummer module $L_{\chi'}$ translated so that its singularities are at~$t_o$ and~$\infty$, so its various extensions ($!,*,\Mid$) all coincide, and so do the various $L_\chi\star L_{\chi'}$ as complex Hodge modules. The fibre of $L_\chi\star L_{\chi'}$ at $t_o$ is $H^1\big(\PP^1,\DR(L_\chi\otimes L_{\chi',t_o})_{\min}\big)$ together with its Hodge filtration. Now $L_\chi\otimes L_{\chi',t_o}$ is a unitary rank-one local system with singular points $0,t_o,\infty$ and respective monodromies $\chi,\chi',(\chi\chi')^{-1}$. Formula~\eqref{eq:nupH1} gives then (with the notation as in the statement):
\[
\big(L_\chi\star L_{\chi'},F^\cbbullet(L_\chi\star L_{\chi'})\big)=
\begin{cases}
(L_{\chi\chi'},F^\cbbullet L_{\chi\chi'})(-1)&\text{if }\alpha_o+\alpha'_o\in(0,1),\\
(L_{\chi\chi'},F^\cbbullet L_{\chi\chi'})&\text{if }\alpha_o+\alpha'_o\in(1,2).
\end{cases}
\]
Assume now that $\chi\chi'=1$. Then $L_\chi\star L_{\chi^{-1}}=L_1=\delta_0$ is supported at the origin, and we wish to compare the filtrations. The comparison, together with the twist by $-1$, directly follows from the first formula in Theorem \ref{th:SaitoTS}, with $\beta_1+\beta_2=1$.

The second statement, which holds in a more general setting, is proved by considering the category of complex mixed Hodge modules. As above, we can reduce to considering real mixed Hodge modules. Then the framework of \cite{MSaito87} allows us to apply the arguments of Proposition \ref{prop:starmidcomm} by considering this abelian category instead of that of holonomic modules, since the functors $\star_*L_\chi,\star_!L_\chi,\star_\Mid L_\chi$ are endo-functors of this category. In all cases we obtain the associativity property in this category: $(M\star_\Mid L_\chi)\star_\Mid L_{\chi'}=M\star_\Mid(L_\chi\star_\Mid L_{\chi'})$. For the statement when $\chi\chi'=1$, we use that if a morphism in the category of mixed Hodge modules is epi (\resp mono) in the category of holonomic module, it is also epi (\resp mono) in the category of mixed Hodge modules.

Our previous computation of $L_\chi\star_\Mid L_{\chi'}$ concludes the proof.
\end{proof}

\begin{proof}[Proof of Theorem \ref{th:MCchi}\eqref{th:MCchi2}]
For $t_o\in\Afu\setminus\bmx$, the fibre of $(\MC_\chi(M),F^\cbbullet\MC_\chi(M))$ at $t_o$ is equal to $H^1\big(\PP^1,\DR((M\otimes L_{\chi,t_o})_{\min})\big)$ together with its Hodge filtration, where $L_{\chi,t_o}$ denotes the Kummer module $L_\chi$ translated so that its singularities are at~$t_o$ and~$x_{r+1}$. We notice that the variation of polarize complex Hodge structure that $\cV\otimes \cL_{\chi,t_o}$ underlies has the same characteristic numbers as the one that $\cV$ underlies, except that it has singularities $\bmx\cup\{t_o\}$ instead of $\bmx\cup\{x_{r+1}\}$: the assertion is clear for the local characteristic numbers, and for the $\delta^p$'s one uses Proposition \ref{prop:degreetensor}. Then, according to \eqref{eq:nupH1}, we have
\begin{align*}
h^p(\MC_\chi(M))=h^pH^1\big(\PP^1,\DR((M&\otimes L_{\chi,t_o})_{\min})\big)\\
&=h^pH^1(\PP^1,\DR\ccM)=h^pH^1(\DR M).\qedhere
\end{align*}
\end{proof}

\begin{proof}[Proof of Theorem \ref{th:MCchi}\eqref{th:MCchi3}]
Recall that the nearby cycle functor is compatible with projective push-forward, as follows from \cite[Prop\ptbl3.3.17]{MSaito86}. Moreover, $\phi_{\wt s-x_i}\ccM_\chi$ is supported at $(x_i,x_i)\in\Afu_x\times\Afu_t$. Therefore, $\phi_{x_i}(\MC_\chi(M),F^\cbbullet\MC_\chi(M))$ can be computed as $\phi_{\wt s-x_i}(\ccM_\chi,F^\cbbullet\ccM_\chi)$ (see Footnote \ref{footnote:1} for the notation). We will use the analytic topology locally at $(x_i,x_i)$ to do this computation, and we will assume that $x_i=0$ to simplify the notation. Then the result is given by \eqref{eq:SaitoTS}.
\end{proof}

\begin{proof}[Proof of Theorem \ref{th:MCchi}\eqref{th:MCchi4}]
Set $\gamma^p=\delta^p-\delta^{p-1}$. By \ref{th:MCchi}\eqref{th:MCchi2}--\eqref{th:MCchi3} and \eqref{eq:nupH1*}, we get
\begin{align*}
h^p(\MC_\chi(M))+h^p(M)&=-\gamma^p(M)+\sum_{i=1}^r\big[\mu_{x_i,\neq1}^{p-1}(M)+\mu_{x_i,1}^p(M)\big]\\[-3pt]
h^p(\MC_\chi(M))+h^{p-1}(M)&=-\gamma^p(\MC_\chi(M))+\sum_{i=1}^r\big[\mu_{x_i,\neq1}^{p-1}(\MC_\chi(M))\\[-8pt]
&\hspace*{5cm}+\mu_{x_i,1}^p(\MC_\chi(M))\big]\\
&\hspace*{-3cm}=-\gamma^p(\MC_\chi(M))+\sum_{i=1}^r\bigg(\sum_{\alpha\in(0,1-\alpha_o]}\mu_{x_i,\lambda}^{p-2}(M)+\sum_{\alpha\in(1-\alpha_o,1)\cup\{0\}}\mu_{x_i,\lambda}^{p-1}(M)\bigg).
\end{align*}
Hence
\begin{multline*}
\gamma^p(\MC_\chi(M))=\gamma^p(M)+h^p(M)-h^{p-1}(M)\\[-3pt]
-\sum_{i=1}^r\bigg([\mu_{x_i,1}^p(M)-\mu_{x_i,1}^{p-1}(M)\big]+\sum_{\alpha\in(0,1-\alpha_o)}[\mu_{x_i,\lambda}^{p-1}(M)-\mu_{x_i,\lambda}^{p-2}(M)]\bigg).
\end{multline*}
Summing these equalities for $p'\leq p$ gives the desired formula.
\end{proof}

\section{Examples}
In the following examples (except in Lemma \ref{lemme4.0.1}), we will consider local systems on the punctured Riemann sphere $\PP^1\setminus \{x_1,\dots,x_4\}$. We will set $x_4=\infty$, and we will assume that $x_1,x_2,x_3$ are distinct points at finite distance, so that we have here $r=3$. A rank-one local system $\cL$ on $\PP^1\setminus \{x_1,\dots,x_4\}$ will simply be denoted by $(\lambda_1,\lambda_2,\lambda_3,\lambda_4)$, where the $\lambda_j$'s are the local monodromies. For a local system of higher rank, we use the following notation for the local monodromy matrices: a unipotent Jordan block of length $m$ is denoted $\rJ(m)$, then $\lambda\rJ(m)$ denotes the length\nobreakdash-$m$ Jordan block with eigenvalue $\lambda$ (we will use the notation $-\rJ(m)$ if $\lambda=\nobreak-1$), and $\lambda_1\rJ(m_1)\oplus \lambda_2\rJ(m_2)$ denotes a matrix in Jordan canonical form consisting the corresponding two Jordan blocks, etc.  Lastly, the identity matrix in $\GL_m(\CC)$ is denoted~$1_m$.

\subsection{A physically rigid $G_2$-local system with minimal Hodge
filtration}\label{subsec:exam1}
Set $\varphi=\exp(\sfrac{\twopii}{3})\in \CC$ be the third root of unity in the upper half plane. It follows from \cite{D-R10}, Theorem 1.3.2, that there exists a physically rigid $G_2$-local system $\cG$ on $\PP^1\setminus \{x_1,x_2,x_3,x_4\}$ with the following local monodromy at $x_1,\dots,x_4$ (respectively):
\begin{equation}\label{eq:locmonoG}
-1_4\oplus 1_3,\quad \rJ(2)\oplus \rJ(3)\oplus \rJ(2),\quad \varphi\rJ(3)\oplus
1\oplus \bar{\varphi}\rJ(3),\quad {\rm 1}_7.
\end{equation}
This local system can be constructed using the following sequence of middle convolutions and tensor products: Set
\begin{align*}
\cL_0&:=(-1,-\bar{\varphi},-\bar{\varphi},-\bar{\varphi}),&
\cL_1&:=(1,-1,-1,1),& \cL_2&:=\cL(-1,1,-\bar{\varphi},\varphi),\\
\cL_3&:=(1,-\bar{\varphi},-\varphi,1),&
\cL_4&:=(-1,1,-\varphi,\bar{\varphi}),&
\cL_5&:=(1,-\bar{\varphi},-\varphi,1),\\
\cL_6&:=(-1,1,\bar{\varphi},-\varphi).
\end{align*}
Then $\cG$ is isomorphic to
$$
\cL_6\otimes \MC_{-\varphi}(\cL_5\otimes\MC_{-\bar{\varphi}}(\cL_4\otimes \MC_{-1}(\cL_3\otimes\MC_{-1}(\cL_2\otimes\MC_{-\varphi}(\cL_1\otimes \MC_{-\bar{\varphi}}(\cL_0)))))).
$$
(That the local monodromy of $\cG$ defined by the formula above is given by \eqref{eq:locmonoG} will be a byproduct of the proof of Theorem \ref{thm:ex2}.) If $L_i$ denotes the unitary rank-one-variation of complex Hodge structure (trivial filtration) underlying $\cL_i$ ($i=0,\dots,6$), then one constructs a variation of polarized complex Hodge structure~$G$ underlying $\cG$ as
$$
L_6\otimes \MC_{-\varphi}(L_5\otimes\MC_{-\bar{\varphi}}(L_4\otimes \MC_{-1}(L_3\otimes\MC_{-1}(L_2\otimes\MC_{-\varphi}(L_1\otimes \MC_{-\bar{\varphi}}(L_0))))))).
$$
We note that his variation is real: indeed, the local monodromies being defined over~$\RR$ by the formula above, the local real structures $\cG\isom\ov\cG$ come from a unique global isomorphism, which is a real structure on $\cG$. By Proposition \ref{prop:Deligne}, it is compatible with the Hodge filtration up to a shift, hence giving rise to a variation of real Hodge structure.

\begin{theoreme}\label{thm:ex2}
The Hodge data of $G$ are as follows:
$$\arraycolsep4.5pt\renewcommand{\arraystretch}{1.4}
\begin{array}{|c||c||c|c|c|c|c||c|}
\hline
p&h^p& \mu_{x_1,-1,0}^p&\mu_{x_2,1,1}^p&\mu_{x_2,1,0}^p&
\mu_{x_3,\varphi,2}^p
&\mu_{x_3, \bar{\varphi},2}^p&\delta^p\\
\hline
2&2&1&0&0&0&0&-2\\
\hline
3&3&2&0&1&0&0&-2\\
\hline
4&2&1&1&1&1&1&-1\\
\hline
\end{array}
$$\end{theoreme}

\begin{proof}
We will use Formulas \eqref{eq:nupVL}--\eqref{eq:muVL}, Proposition \ref{prop:degreetensor}, Formula \eqref{eq:nupH1*} and Theorem \ref{th:MCchi}. Let us make explicit the first steps.

The Hodge data of $\cL_0$ are as follows:
$$
h^0=1;\quad \mu_{x_1,-1,0}^0=\mu_{x_2,-\bar{\varphi},0}^0=\mu_{x_3,-\bar{\varphi},0}^0=1;\quad
\delta^0=-3
$$
The degree is computed with the residue formula: The residue $a_i$ of the canonical extension of the connection $\nabla_0$ underlying $\cL_0=(-1,-\bar{\varphi},-\bar{\varphi},-\bar{\varphi})$ at $x_1,\dots,x_4$, respectively is given by (recall the convention that $\alpha_i=\exp(-\twopii a_i)$):
$$
a_1=\frac{1}{2},\quad a_2=\frac{5}{6},\quad a_3=\frac{5}{6},\quad
a_4=\frac{5}{6},
$$
so $\delta^0=-\sum_ia_i=-3$. It follows from Proposition \ref{prop:numdataMCchi} that the middle convolution $\MC_{-\bar{\varphi}}(L_0)$ has rank $2$ and the following local monodromy at $x_1,x_2,x_3, x_4$ (respectively):
$$
\bar{\varphi}\oplus 1,\quad \varphi\oplus 1 ,\quad\varphi\oplus 1, \quad-\varphi\cdot 1_2.
$$
By Theorem \ref{th:MCchi}(1) and Equation~\eqref{eq:nupH1*}, the Hodge number $h^0$ of $\MC_{-\bar{\varphi}}(L_0)$ is
$$
h^0(\MC_{-\bar{\varphi}}(L_0))= -\delta^0(L_0)-h^0(L_0)=-(-3)-1=2.
$$
This implies that all other Hodge numbers $h^p(\MC_{-\bar{\varphi}}(L_0))$ vanish. Hence, we obtain the following local Hodge data $\mu_{x_i,\alpha,\ell}^0$ for $\MC_{-\bar{\varphi}}(L_0)$
$$
\mu_{x_1,\bar{\varphi},0}^0=\mu_{x_2,\varphi,0}^0=\mu_{x_3,\varphi,0}^0=1
$$
and all other $\mu_{x_i,\alpha,\ell}^p$'s vanish. By Theorem \ref{th:MCchi}(3), the degree $\delta^0$ of $\MC_{-\bar{\varphi}}(L_0)$ is
$$
\delta^0=\delta^0(L_0)+h^0(L)-\sum_{i=1}^3\bigg(\mu_{x_i,1}^0+
\sum_{\alpha\in (0,\sfrac{1}{6})}\mu_{x_i,\exp(-\twopii \alpha)}^{-1}(L_0)\bigg)=-3+1=-2.
$$

Let us now consider $L_1\otimes\MC_{-\bar{\varphi}}(L_0)$. Its local monodromy is
$$
\bar{\varphi}\oplus 1,\quad -\varphi\oplus -1 ,\quad-\varphi\oplus -1, \quad-\varphi\cdot 1_2.
$$
This implies that the nonvanishing local Hodge data of $L_1\otimes
\MC_{-\bar{\varphi}}(L_0)$ are
$$
h^0=2;\quad \mu_{x_1,\bar{\varphi},0}^0=\mu_{x_2,-\varphi,0}^0=\mu_{x_2,-1,0}^0=\mu_{x_3,\varphi,0}^0=\mu_{x_3,-1,0}^0=1.
$$
By Proposition \ref{prop:degreetensor}, the only nonvanishing Hodge degree of $L_1\otimes\MC_{-\bar{\varphi}}(L_0)$ is
\begin{align*}
\delta^0(L_1\otimes
\MC_{-\bar{\varphi}}(L_0))&=\delta^0(\MC_{-\bar{\varphi}}(L_0))-h^0(\MC_{-\bar{\varphi}}(L_0))\\
&\hspace*{3cm}+\sum_{i=2,3}\sum_{\beta\in [-\sfrac{1}{2},0)}
\mu_{x_i,\exp(-\twopii\beta)}^0\\
&=-2-2+(1+1)=-2.
\end{align*}
It follows from Proposition \ref{prop:numdataMCchi} that $\MC_{-\varphi}(L_1\otimes \MC_{-\bar{\varphi}}(L_0))$ has rank $3$ and the following local monodromy at $x_1,x_2,x_3, x_4$ (respectively):
$$
-1\oplus 1_2,\quad \bar{\varphi}\oplus \varphi\oplus 1,\quad \bar{\varphi}\oplus \varphi\oplus 1,\quad -\bar{\varphi}\cdot 1_3.
$$
Using Theorem \ref{th:MCchi}(1), we have
\begin{align*}
h^1(\MC_{-\varphi}(L_1\otimes \MC_{-\bar{\varphi}}(L_0)))&=
\delta^0-\delta^1-h^1+\sum_{i=1}^3(\mu_{x_i,\neq 1}^0+\mu_{x_i,1}^1)\\
&=-2-0-0+(1+2+2)=3,
\end{align*}
where we use the convention that the Hodge data on the right hand side are those of the convolutant $H_1:=L_1\otimes \MC_{-\bar{\varphi}}(L_0)$. Consequently, the other local Hodge data of $\MC_{-\varphi}(H_1)$ are
$$
\mu_{x_1,-1,0}^1=\mu_{x_2,\bar{\varphi},0}^1=\mu_{x_2,\varphi,0}^1 =\mu_{x_3,\bar{\varphi},0}^1=\mu_{x_3,\varphi,0}^1 =1.
$$
By Theorem \ref{th:MCchi}(3),
\begin{align*}
\delta^1(\MC_{-\varphi}(H_1))&=
\delta^1+h^1-\sum_{i=1}^3\bigg(\mu_{x_i,1}^1+\sum_{\alpha\in (0,1-\alpha_0),\,\alpha_0=\sfrac{1}{6}}\mu_{x_i,\exp(-\twopii\alpha)}^0\bigg)\\
&= 0+0-(1+2+2)=-5
\end{align*}
(again, we use the convention that the Hodge data on the right hand side are those of $H_1$).
The local monodromy of $H_2:=L_2\otimes \MC_{-\varphi}(H_1)$ is
$$
1\oplus -1_2,\quad \bar{\varphi}\oplus \varphi\oplus 1,\quad -{\varphi}\oplus -1\oplus -\bar{\varphi},\quad - 1_3.
$$
Consequently, the local Hodge data of $H_2$ are
$$
h^1=3;\quad \mu_{x_1,-1,0}^1=2,\quad \mu_{x_2,\bar{\varphi},0}^1=\mu_{x_2,\varphi,0}^1=
\mu_{x_3,-{\varphi},0}^1=\mu_{x_3,-1,0}^1=\mu_{x_3,-\bar{\varphi},0}^1=1.
$$
By Proposition \ref{prop:degreetensor}, the only nonvanishing global Hodge number of $H_2$ is (with $\alpha_1=\frac{1}{2}, \, \alpha_2=0,\,\alpha_3=\frac{5}{6},\, \alpha_4=\frac{4}{6}$):
\begin{align*}
\delta^1(H_2)&=-5+3\cdot(-2)+\sum_{i=1,3,4}\sum_{\alpha\in [1-\alpha_i,1)}\mu_{x_i,\exp(-\twopii\alpha)}^p \\
&=-11+(1+2+3)=-5.
\end{align*}
In the following, we only list the Hodge data for the sheaves involved in the construction of $H$. Each value can be immediately verified from the preceding Hodge data, using Proposition \ref{prop:numdataMCchi} for the local monodromy, Proposition \ref{prop:degreetensor} and Equations~\eqref{eq:nupVL}--\eqref{eq:muVL} for the tensor product, and Theorem \ref{th:MCchi} for the middle convolution.

$\MC_{-1}(H_2)$:
\begin{gather*}
\rJ(2)\oplus \rJ(2),\quad -\bar{\varphi}\oplus -\varphi\oplus 1_2,\quad {\varphi}\oplus \rJ(2) \oplus \bar{\varphi},
\quad -1_4,\\
\arraycolsep4.5pt\renewcommand{\arraystretch}{1.4}
\begin{array}{|c||c||c|c|c|c|c|c||c|}
\hline
p&h^p& \mu_{x_1,1,0}^p&\mu_{x_2,-\bar{\varphi},0}^p&\mu_{x_2,-{\varphi},0}^p&\mu_{x_3,\varphi,0}^p&\mu_{x_3,1,0}^p
&\mu_{x_3, \bar{\varphi},0}^p&\delta^p\\
\hline
1&2&0&0&1&0&0&1&-2\\
\hline
2&2&2&1&0&1&1&0&-2\\
\hline
\end{array}
\end{gather*}
\hspace{.5cm}

$H_3:=L_3\otimes \MC_{-1}(H_2)$:
\begin{gather*}
\rJ(2)\oplus \rJ(2),\quad
{\varphi}\oplus 1 \oplus- \bar{\varphi}1_2,
\quad
-\bar{\varphi}\oplus -\varphi\rJ(2) \oplus -1,
\quad -1_4,\\
\arraycolsep4.5pt\renewcommand{\arraystretch}{1.4}
\begin{array}{|c||c||c|c|c|c|c|c||c|}
\hline
p&h^p& \mu_{x_1,1,0}^p&\mu_{x_2,{\varphi},0}^p&\mu_{x_2,-\bar{\varphi},0}^p&\mu_{x_3,-\bar{\varphi},0}^p&\mu_{x_3,-\varphi,1}^p
&\mu_{x_3, -1,0}^p&\delta^p\\
\hline
1&2&0&0&1&0&0&1&-3\\
\hline
2&2&2&1&1&1&1&0&-3\\
\hline
\end{array}
\end{gather*}
\hspace{.5cm}

$\MC_{-1}(H_3)$:
\begin{gather*}
-1_2\oplus 1_3, \quad
1_2\oplus  -{\varphi}\oplus \bar{\varphi}1_2,
\quad
\bar{\varphi}\oplus \varphi\rJ(2) \oplus \rJ(2),
\quad -1_5,\\
\arraycolsep4.5pt\renewcommand{\arraystretch}{1.4}
\begin{array}{|c||c||c|c|c|c|c|c||c|}
\hline
p&h^p& \mu_{x_1,-1,0}^p&\mu_{x_2,-{\varphi},0}^p&\mu_{x_2,\bar{\varphi},0}^p&\mu_{x_3,\bar{\varphi},0}^p&\mu_{x_3,\varphi,1}^p
&\mu_{x_3, 1,0}^p&\delta^p\\
\hline
1&1&0&0&1&0&0&0&-1\\
\hline
2&3&2&1&1&1&0&1&-4\\
\hline
3&1&0&0&0&0&1&0&-1\\
\hline
\end{array}
\end{gather*}
\hspace{.5cm}

$H_4:=L_4\otimes \MC_{-1}(H_3)$:
\begin{gather*}
-1_2\oplus 1_3, \quad
1_2\oplus  -{\varphi}\oplus \bar{\varphi}1_2,
\quad
-1\oplus -\bar{\varphi}\rJ(2) \oplus -\varphi \rJ(2),
\quad -\bar{\varphi}\cdot 1_5,\\
\arraycolsep4.5pt\renewcommand{\arraystretch}{1.4}
\begin{array}{|c||c||c|c|c|c|c|c||c|}
\hline
p&h^p& \mu_{x_1,-1,0}^p&\mu_{x_2,-{\varphi},0}^p&\mu_{x_2,\bar{\varphi},0}^p&\mu_{x_3,-1,0}^p&\mu_{x_3,-\bar{\varphi},1}^p
&\mu_{x_3, -\varphi,1}^p&\delta^p\\
\hline
1&1&1&0&1&0&0&0&-2\\
\hline
2&3&1&1&1&1&0&1&-5\\
\hline
3&1&1&0&0&0&1&0&-2\\
\hline
\end{array}
\end{gather*}
\hspace{.5cm}

$\MC_{-\bar{\varphi}}(H_4)$:
\begin{gather*}
1_3\oplus \bar{\varphi}1_3, \quad
\rJ(2) \oplus -{\varphi}1_2\oplus 1_2,
\quad
\bar{\varphi}\oplus {\varphi}\rJ(2) \oplus \rJ(3),
\quad - {\varphi}\cdot 1_6,\\
\arraycolsep4.5pt\renewcommand{\arraystretch}{1.4}
\begin{array}{|c||c||c|c|c|c|c|c||c|}
\hline
p&h^p& \mu_{x_1,\bar{\varphi},0}^p&\mu_{x_2,-1,0}^p&\mu_{x_2,-{\varphi},0}^p&\mu_{x_3,\bar{\varphi},0}^p&\mu_{x_3,{\varphi},1}^p
&\mu_{x_3, 1,1}^p&\delta^p\\
\hline
1&1&1&0&1&0&0&0&-1\\
\hline
2&3&1&0&1&1&0&0&-2\\
\hline
3&2&1&1&0&0&1&1&-1\\
\hline
\end{array}
\end{gather*}
\hspace{.5cm}

$H_5:=L_5\otimes \MC_{-\bar{\varphi}}(H_4)$:
\begin{gather*}
1_3\oplus \bar{\varphi}1_3, \quad
-\bar{\varphi}\rJ(2)\oplus 1_2\oplus -\bar{\varphi}1_2,
\quad
-1\oplus -\bar{\varphi}\rJ(2) \oplus -\varphi\rJ(3),
\quad - {\varphi}\cdot 1_6,\\
\arraycolsep4.5pt\renewcommand{\arraystretch}{1.4}
\begin{array}{|c||c||c|c|c|c|c|c||c|}
\hline
p&h^p& \mu_{x_1,\bar{\varphi},0}^p&\mu_{x_2,-\bar{\varphi},1}^p&\mu_{x_2,-\bar{\varphi},0}^p&\mu_{x_3,-1,0}^p&
\mu_{x_3,-\bar{\varphi},1}^p
&\mu_{x_3, -\varphi,2}^p&\delta^p\\
\hline
1&1&1&0&0&0&0&0&-1\\
\hline
2&3&1&0&1&1&0&0&-4\\
\hline
3&2&1&1&1&0&1&1&-3\\
\hline
\end{array}
\end{gather*}
\hspace{.5cm}

$\MC_{-{\varphi}}(H_5)$:
\begin{gather*}
1_4\oplus -1_3, \quad
\rJ(3)\oplus \rJ(2)\oplus \rJ(2),
\quad
\varphi\oplus \rJ(3) \oplus \bar{\varphi}\rJ(3),
\quad - \bar{\varphi}\cdot 1_7,\\
\arraycolsep4.5pt\renewcommand{\arraystretch}{1.4}
\begin{array}{|c||c||c|c|c|c|c|c||c|}
\hline
p&h^p& \mu_{x_1,-1,0}^p&\mu_{x_2,1,1}^p&\mu_{x_2,1,0}^p&\mu_{x_3,\varphi,0}^p&
\mu_{x_3,1,1}^p
&\mu_{x_3, \bar{\varphi},2}^p&\delta^p\\
\hline
2&2&1&0&0&0&0&0&-3\\
\hline
3&3&1&0&1&1&0&0&-4\\
\hline
4&2&1&1&1&0&1&1&-2\\
\hline
\end{array}
\end{gather*}
\hspace{.5cm}

$G:=L_6\otimes \MC_{-{\varphi}}(H_5)$:
\begin{gather*}
-1_4\oplus1_3, \quad
\rJ(3)\oplus \rJ(2)\oplus \rJ(2),
\quad
\bar{\varphi}\rJ(3) \oplus1\oplus {\varphi}\rJ(3),
\quad 1_7,\\
\arraycolsep4.5pt\renewcommand{\arraystretch}{1.4}
\begin{array}{|c||c||c|c|c|c|c||c|}
\hline
p&h^p& \mu_{x_1,-1,0}^p&\mu_{x_2,1,1}^p&\mu_{x_2,1,0}^p&
\mu_{x_3,\varphi,2}^p
&\mu_{x_3, \bar{\varphi},2}^p&\delta^p\\
\hline
2&2&1&0&0&0&0&-2\\
\hline
3&3&2&0&1&0&0&-2\\
\hline
4&2&1&1&1&1&1&-1\\
\hline
\end{array}
\end{gather*}
\end{proof}

We remark that the Hodge filtration of $G$ has minimal length among the irreducible rank-$7$ local systems with $G_2$-monodromy which underlie a variation of polarized \emph{real} Hodge structure, according to \cite[Ch.\,IV]{G-G-K12} (\cf also \cite[\S9]{K-P12}).

\subsection{An orthogonally rigid $G_2$-local system with maximal Hodge filtration}\label{subsec:exam2}

By the classification of orthogonally rigid local systems with $G_2$-monodromy given in \cite{D-R12} there exists, up to tensor products with rank-one local systems, a unique $\ZZ$-local system $\cH$ on a $4$-punctured sphere $\PP^1\setminus \{x_1,\dots,x_4\}$ which is orthogonally rigid of rank $7$ and which satisfies the following properties (this is the case $P3, 6$ in \loccit):
\begin{itemize}
\item
The monodromy of $\cH$ is Zariski dense in the exceptional algebraic group~$G_2$.
\item
The underlying motive is defined over $\QQ$ (\ie we use only convolutions with quadratic Kummer sheaves and the tensor operations use motivic sheaves defined over $\QQ$).
\item
The local system $\cH$ does not have indecomposable local monodromy at any singular point.
\end{itemize}

The construction of $\cH$ is as follows. Start with a rank-one local system $\cL_0$ on $\Afu\setminus \{x_1,x_2,x_3\}$ with monodromy tuple $(-1,-1,-1,-1)$. Similarly, we define local systems of rank one $\cL_1,\cL_2,\cL_3$ on $\Afu\setminus \{x_1,x_2,x_3\}$ given by the monodromy tuples $(1,1,-1,-1)$, $(-1,1,1,-1)$, $(-1,1,-1,1)$, respectively. Let $-1:\pi_1(\GG_m(\CC))\to \CC^\times$ be the unique quadratic character. Then define $\cH$ as
$$
\cH= \MC_{-1}(\cL_3\otimes \MC_{-1}(\cL_2\otimes \tilde{\Lambda}^2(\MC_{-1}(\cL_1\otimes \Sym^2(\MC_{-1}(\cL_0)))) )),
$$
where $\tilde{\Lambda}^2$ is defined as follows. Notice first that $\cL_j$ ($j=0,\dots,3$) are self-dual unitary local systems. According to Remark \ref{rem:duality} and to Corollary \ref{cor:localdatareg} for the computation of the rank, $\MC_{-1}(\cL_0)$ is a symplectic irreducible variation of Hodge structure of rank two, and then $\Sym^2(\MC_{-1}(\cL_0))$ and $\cL_1\otimes \Sym^2(\MC_{-1}(\cL_0))$ are orthogonal irreducible variations of Hodge structure of rank three (see the proof of Theorem \ref{thmhodgeregular} for the argument concerning irreducibility). Then $\MC_{-1}(\cL_1\otimes\nobreak \Sym^2(\MC_{-1}(\cL_0)))$ is a symplectic irreducible variation of Hodge structure of rank four.

Now, any symplectic irreducible variation of Hodge structure $\cV$ of rank four having $\Sp_4$-monodromy has the property that the exterior square $\Lambda^2(\cV)$ is the direct sum of a rank-one variation $\cE$ (given by the symplectic form) and a variation $\tilde{\Lambda}^2(\cV)$ of rank five (this construction establishes the exceptional isomorphism $\Sp_4\simeq\nobreak \SO_5$). We will explain below that the local monodromy of $\tilde{\Lambda}^2(\cV)$ at $x_3$ is maximally unipotent. Since we a priori know that $\tilde{\Lambda}^2(\cV)$ is semi-simple, being a direct summand of $\cV\otimes\cV$, it is therefore irreducible.

It is proved in \cite{D-R12} that the local monodromy of $\cH$ at $x_1,\dots,x_4$ is given by
$$
\rJ(2)\oplus \rJ(3)\oplus \rJ(2),\quad 1_3\oplus
-\rJ(2)\oplus -\rJ(2),
\quad \rJ(2)\oplus -\rJ(3)\oplus \rJ(2),\quad -1_7,$$
respectively.

If $L_0,L_1,L_2,L_3$ denote the unitary variation of complex Hodge structure (with trivial Hodge filtration $F^0L_j=L_j$, $F^1L_j=0$) underlying $\cL_0,\cL_1,\cL_2,\cL_3$, then one obtains a rank-seven variation of polarized complex Hodge structure
$$
H= \MC_{-1}(L_3\otimes \MC_{-1}(L_2\otimes \tilde{\Lambda}^2(\MC_{-1}(L_1\otimes \Sym^2(\MC_{-1}(L_0)))) ))$$
whose underlying local system is $\cH$. It will be clear from the computation that each local system which $\MC_{-1}$ is applied to has scalar monodromy $-\id$ at $x_4$.

\begin{theoreme}\label{thmhodgeregular}
The variation of polarized Hodge structure $H$ has the following local and global Hodge data:
$$\arraycolsep4.5pt\renewcommand{\arraystretch}{1.4}
\begin{array}{|l|c|c|c|c|c|c|c|}
\hline
p&1&2&3&4&5&6&7\\
\hline\hline
h^p &1&1&1&1&1&1&1\\
\hline
\mu_{x_1,1,0}^p&0&1&0&0&0&0&1\\
\mu_{x_1,1,1}^p&0&0&0&0&1&0&0\\
\hline
\mu_{x_2,-1,1}^p&0&0&1&0&0&1&0\\
\hline
\mu_{x_3,-1,3}^p&0&0&0&0&1&0&0\\
\mu_{x_3,1,0}^p&0&1&0&0&0&0&1\\
\hline
\delta^p& -1&-1&-2&-1&-2&-1&0\\
\hline
\end{array}
$$
\end{theoreme}

We will need the following lemma:

\begin{lemme}\label{lemme4.0.1}
Let $(E,\nabla,F^\cbbullet E)$ be the filtered flat bundle underlying a polarizable variation of complex Hodge structure on $\PP^1\setminus\{0,1,\infty\}$. Set $x_1=0$, $x_2=1$, $x_3=\infty$. Assume the following:
\begin{enumerate}
\item
$(E,\nabla)$ is irreducible,
\item
$\rk E=3$,
\item
the monodromy at each singular point is maximally unipotent.
\end{enumerate}
Then the canonically extended Hodge bundles $H^p$ have degree $\delta^p(E)$
as follows:
\[
\delta^2(E)=1,\quad\delta^1(E)=0,\quad\delta^0(E)=-1.
\]
\end{lemme}

\begin{proof}
The local data must be as follows  (up to a shift of the filtration) because of the assumption on the local monodromies:
\begin{enumerate}
\item
$h^p(E)=1$ for $p=0,1,2$ and $h^p(E)=0$ otherwise,
\item
for $i=1,2$, $\mu_{x_i,1,1}^2=1$ and all other $\mu_{x_i,\lambda,\ell}^p$ are zero, hence $\mu_{x_i,1}^2=\mu_{x_i,1}^1=1$, all other $\mu_{x_i,\lambda}^p$ are zero,
\item
$\nu_{x_3,1,2}^2=1$ and all other $\nu_{x_3,\lambda,\ell}^p$ are zero, hence $\nu_{x_3,1,\prim}^2=1$, all other $\nu_{x_3,1,\prim}^p$ are zero.
\end{enumerate}
We have $\dim H^1(\PP^1,j_*\cE)\!=\!\sum_{i=1}^3\mu_{x_i}-2\rk E=0$. Therefore, according to \eqref{eq:nupH1},
\begin{align*}
0=h^3H^1(\PP^1,j_*\cE)&=\delta^2(E)-\nu_{x_3,1,\prim}^2(E)=\delta^2(E)-1\\
0=h^2H^1(\PP^1,j_*\cE)&=\delta^1(E)-\delta^2(E)-h^2(E)+\mu_{x_1,1}^2+\mu_{x_2,1}^2=\delta^1(E)-\delta^2(E)+1\\
0=h^1H^1(\PP^1,j_*\cE)&=\delta^0(E)-\delta^1(E)-h^1(E)+\mu_{x_1,1}^1+\mu_{x_2,1}^1=\delta^0(E)-\delta^1(E)+1\\
0=h^0H^1(\PP^1,j_*\cE)&=-\delta^0(E)-h^0(E)=-\delta^0(E)-1.\qedhere
\end{align*}
\end{proof}

\begin{proof}[\proofname\ of Theorem \ref{thmhodgeregular}]
From Proposition \ref{prop:numdataMCchi} one derives that $\MC_{-1}(L_0)$ is a variation of rank $2$ whose local monodromy at $x_1,x_2,x_3$ is indecomposable unipotent and is the scalar matrix $-1_2$ at $\infty$. Since $h^0(L_0)=1$, $\delta^0(L_0)=-2$ and $\mu_{x_i,-1}^0(L_0)=1$ (and all other Hodge data are zero), Formula \eqref{eq:nupH1*} gives $h^0\MC_{-1}(L_0)=h^1\MC_{-1}(L_0)=1$.

Recall that, for a variation of polarized Hodge structure $(V,F^\cbbullet V,\nabla,k)$, $V\otimes V$, $\Sym^2V$ and $\Lambda^2V$ also underlie such a variation and, setting $\gr_F^{p/2}V=0$ if $p/2\not\in\ZZ$, we have natural isomorphisms
\begin{align}
\gr^p_F(V\otimes V)&\simeq\bigoplus_{j+k=p}(\gr^j_FV\otimes\gr^k_FV),\notag\\
\gr^p_F(\Sym^2V)&\simeq\bigoplus_{\substack{j<k\\j+k=p}}(\gr^j_FV\otimes\gr^k_FV)\oplus\Sym^2(\gr^{p/2}_FV),\notag\\\label{eq:antisym}
\gr^p_F(\Lambda^2V)&\simeq\bigoplus_{\substack{j<k\\j+k=p}}(\gr^j_FV\otimes\gr^k_FV)\oplus\Lambda^2(\gr^{p/2}_FV).
\end{align}
We will use these formulas when $\rk\gr^k_FV=1$ for all $k$, so that we will replace $\Sym^2(\gr^{p/2}_FV)$ by $\gr^{p/2}_FV$ and $\Lambda^2(\gr^{p/2}_FV)$ by $0$.

It follows that the symmetric square $\Sym^2(\MC_{-1}(L_0))$ is a variation of rank~$3$ whose local monodromy at $x_1,x_2,x_3$ is easily seen to be indecomposable unipotent (and identity at~$\infty$). It is irreducible (a rank-one sub or quotient  local system would come from a rank-one sub or quotient local system of $\MC_{-1}(\cL_0)$ by the diagonal embedding, according to the form of the local monodromies). It follows from Lemma~\ref{lemme4.0.1} (by~shifting the singularities $0,1,\infty$ to $x_1,x_2,x_3$, respectively), that
$$
\delta^2(\Sym^2(\MC_{-1}(L_0)))=1,\ \delta^1(\Sym^2(\MC_{-1}(L_0)))=0,\ \delta^0(\Sym^2(\MC_{-1}(L_0)))=-1.
$$
The local Hodge data of $L_1\otimes \Sym^2(\MC_{-1}(L_0))$ are as follows:
$$
h^0=h^1=h^2=1,\quad
\mu_{x_i,1,1}^2=1\, (i=1,2),\quad \mu_{x_3,-1,2}^2=1,
$$
and all other $\mu_{x_i,\lambda,\ell}^p$'s vanish, according to \eqref{eq:nupVL}--\eqref{eq:muVL}. By Proposition \ref{prop:degreetensor}, the Hodge degrees of $L_1\otimes \Sym^2(\MC_{-1}(L_0))$ are
$$
\delta^2=0,\quad \delta^1=-1,\quad \delta^0=-2.
$$

Set $V=\MC_{-1}(L_1\otimes \Sym^2(\MC_{-1}(L_0))$. By Theorem \ref{th:MCchi}(1) and \eqref{eq:nupH1*}, we have
$$
h^p(V)=\delta^{p-1}-\delta^p-h^p+\sum_{i=1}^r(\nu_{x_i,\neq 1}^{p-1}+\mu_{x_i,1}^p).
$$
Hence $ h^p(V)=1$ for $p=0,1,2,3$. By Proposition \ref{prop:numdataMCchi}, the local monodromy at $x_1,x_2,x_3,x_4$ of $V$ is:
$$
1_2\oplus -\rJ(2), \quad
1_2\oplus -\rJ(2), \quad
\rJ(4),\quad -1_4.
$$
By Theorem \ref{th:MCchi}(2), the local Hodge data of $V$ are as follows: For $i=1,2$ we have $\mu_{x_i,-1,1}^2=1$ and $0$ else, so from Remark \ref{rem:missingnup} we obtain $\nu_{x_i,1,0}^0=\nu_{x_i,1,0}^3=1$, and $\mu_{x_3,1,2}^3=1$ and $0$ else. Using Theorem \ref{th:MCchi}(3), one obtains the following global Hodge data $\delta^p=\delta^p(V)$:
\begin{equation}\label{eq:degV}
\delta^0=-1,\quad \delta^1=-2,\quad \delta^2=-1,\quad \delta^3=0.
\end{equation}

Let us now consider $\wt\Lambda^2V$, which has rank $5$. We will need a specific argument to compute its local and global Hodge data. From \eqref{eq:antisym} we obtain isomorphisms over $\PP^1\setminus\nobreak\{x_1,\dots,x_4\}$:
\begin{equation}\label{eq:grFV}
\gr_F^p(\Lambda^2(V))\simeq
\begin{cases}
(\gr_F^j(V)\otimes\gr_F^k(V))\quad\text{for }\begin{cases}
p\neq3,\\0\leq j<k\leq3,\\j+k=p,\end{cases}\\
(\gr_F^1(V)\otimes\gr_F^2(V))\oplus(\gr_F^0(V)\otimes\gr_F^3(V))\quad\text{for }p=3.
\end{cases}
\end{equation}

Locally at $x_i$ ($i=1,\dots,4$), let $e_0^i,\dots,e_3^i$ be a basis of the canonical extension~$V^0$ of $V$ lifting a basis $\wt e_0^i,\dots,\wt e_3^i$ of $V^0/V^1$ such that $\wt e_k^i$ induces a basis of the rank-one vector space $\gr^k_F(V^0/V^1)$. We will use below the identification $\psi_{x_i,-1}(V)=\gr^{1/2}_V(V)$ instead of $\gr^{-1/2}_V(V)$ (this is equivalent, even from the point of view of the Hodge filtration, according to the definitions in \S\ref{subsec:localHodgetheory}). Hence, locally, we have a basis of $\Lambda^2(V)^{-\infty}$
given by (the anti-symmetrization of)
$$
v_1^i=e_0^i\otimes e_1^i,\
v_2^i=e_0^i\otimes e_2^i, \
v_3^i=e_1^i\otimes e_2^i,\
v^{\prime i}_3=e_0^i\otimes e_3^i,\
v_4^i=e_1^i\otimes e_3^i,\
v_5^i=e_2^i\otimes e_3^i.
$$

At $x_3$, we can choose $\wt\bme{}^3$ such that $t\partial_t\wt e_j^3=\wt e_{j-1}^3$, which implies the following properties (locally at $x_3$):
\begin{itemize}
\item
the local monodromy of $\wt \Lambda^2(V)$ is $\rJ(5)$, with basis $v_5^3,v_4^3,v^{\prime 3}_3+v_3^3,2v_2^3,2v_1^3$, and the direct summand $E$ of rank one corresponding to the symplectic form has basis $v^{\prime 3}_3-v_3^3$,
\item
we have $ \mu_{x_3,1,3}^5(\wt \Lambda^2(V))=1$ and all other $\mu_{x_3,\lambda,\ell}^p$ vanish,
\item
Formula \eqref{eq:grFV} extends to a similar formula for the local Hodge bundles $\gr^p_FV^0(\Lambda^2(V))$,
\end{itemize}
and then, over $\PP^1\setminus \{x_1,\dots,x_4\}$,
\begin{itemize}
\item
we have $E=\gr^3_FE$,
\item
we have $h^1(\wt\Lambda^2V)=\cdots=h^5(\wt\Lambda^2V)=1$.
\end{itemize}

Let us now consider the local situation at $x_i$, $i=1,2$. From our computation of $\mu_{x_i,-1,\ell}^p(V)$, we can choose the basis $\wt\bme{}^i$ so that the nilpotent operator $(t\partial_t-\nobreak\frac{1}{2}):\psi_{x_i,-1}(V)\to \psi_{x_i,-1}(V)$ acts as~\hbox{$\wt e_2^i\mto\wt e_1^i\mto0$}. We also have $t\partial_t\wt e_0^i=t\partial_t\wt e_3^i=0$. Therefore, the corresponding operator $(t\partial_t-\frac{1}{2})$ on $\psi_{x_i,-1}(\Lambda^2(V))$ acts as \hbox{$\wt v_2^i\mto\wt v_1^i\mto0$} and $\wt v_5^i\mto\wt v_4^i\mto0$. In a similar way one checks that $v_3^i$ belongs to $V^1(\Lambda^2V)$, while $v_3^{\prime i}$ belongs to $V^0(\Lambda^2V)$. We also note that $E=\gr^3_FE$ (seen above) is generated by some combination of $v_3^i$ and $v^{\prime i}_3$. Moreover,
\begin{itemize}
\item
the local monodromy of $\wt \Lambda^2(V)$ is $-\rJ(2)\oplus-\rJ(2)\oplus1$,
\item
the local Hodge data are: $ \mu_{x_i,-1,1}^2(\wt \Lambda^2(V))=\mu_{x_i,-1,1}^5(\wt \Lambda^2(V))=1$ and all other $\mu_{x_i,\lambda,\ell}^p$ vanish,
\item
the first line ($p\neq3$) of Formula \eqref{eq:grFV} extends locally at $x_i$ as it is to the Hodge bundles $\gr^pV^0(\Lambda^2(V))$, while the second line extends as
\[
\gr^3_FV^0(\Lambda^2(V))=x^{-1}(\gr_F^1(V^0)\otimes\gr_F^2(V^0))\oplus(\gr_F^0(V^0)\otimes\gr_F^3(V^0)),
\]
where $x$ is a local coordinate at $x_i$.
\end{itemize}

Lastly, at $x_4$, $\wt v_1,\dots,\wt v_5$ form a basis of $\gr^1_V( \Lambda^2(V))$. We conclude that
\begin{itemize}
\item
the local monodromy of $\wt \Lambda^2(V)$ is $1_5$,
\item
we have $\mu_{x_4}=0$,
\item
Formula \eqref{eq:grFV} extends to a similar formula for the local Hodge bundles $\gr^p_FV^1(\Lambda^2(V))=x\gr^p_FV^0(\Lambda^2(V))$.
\end{itemize}

We can now compute the degree of $\gr^p_FV^0(\Lambda^2(V))$ by using the local shift information comparing this bundle with $\gr^j_F\gr^0_V(V)\otimes\gr^k_F\gr^0_V(V)$. We find:
\begin{align*}
\gr^{j+k}_FV^0(\Lambda^2(V))&=\big(\gr^j_F(V^0)\otimes\gr^k_F(V ^0)\big)(1)\quad\text{if $j+k\neq3$ (shift at $x_4$)},\\
\gr^3_FV^0(\Lambda^2(V))&=\big(\gr^1_F(V^0)\otimes\gr^2_F(V^0)\big)(3)\oplus\big(\gr^0_F(V^0)\otimes\gr^3_F(V^0)\big)(1)\\&\hspace*{6cm}\text{(shift at $x_1,x_2,x_4$)}.
\end{align*}
Taking now into account \eqref{eq:degV} we find, for $p=1,\dots,5$ respectively,
$$
\delta^p(\Lambda^2(V))=-2,-1,0,-1,0.
$$
We have $\delta^p(\wt\Lambda^2(V))=\delta^p(\Lambda^2(V))$ for $p\neq3$. On the other hand, $E=\gr^3_FE$ is a direct summand of $\gr^3_F(\Lambda^2(V))\simeq\cO_{\PP^1}^2$ by the previous degree computation. It follows that both $E$ and $\gr^3_F(\wt\Lambda^2(V))$ are isomorphic to $\cO_{\PP^1}$ and $\delta^3(\wt\Lambda^2(V))=0$.

We can now continue the computation as in \S\ref{subsec:exam1} in order to obtain the desired table.
\end{proof}

Let us now specialize the points $x_1,x_2,x_3$ to $0,1,-1$ (respectively). Note that for each prime number $\ell$ there are lisse \'etale $\QQ_\ell$-sheaves $\cL_{i,\ell}$, $i=0,1,2,3$ of rank one on $\Afu_{\ZZ[\sfrac{1}{2}]}\setminus \{0,1,-1\}=\Spec\left(\ZZ[\frac{1}{2}][x, \frac{1}{x},\frac{1}{x-1},\frac{1}{x+1}]\right)$ whose analytifications have a $\ZZ$-form given by $\cL_i$, $i=0,1,2,3$ (respectively). One may construct~$\cL_{0,\ell}$ by considering the \'etale Galois cover $g:X\to \Afu_{\ZZ[\sfrac{1}{2}]}\setminus \{0,1,-1\}$ defined by the equation
$$
y^2=x(x-1)(x+1).
$$
Denote the automorphism $y\mto -y$ of $X$ by $\sigma$. Then the sheaf $\cL_{0,\ell}:=\frac{1}{2}(1-\nobreak\sigma)g_*(\QQ_{\ell,X})$ has the desired properties. The sheaves $\cL_{i,\ell}$ ($i=1,2,3$) can be constructed in the same way. Define
$$
\cH_\ell= \MC_{-1}(\cL_{3,\ell}\otimes \MC_{-1}(\cL_{2,\ell}\otimes \tilde{\Lambda}^2(\MC_{-1}(\cL_{1,\ell}\otimes \Sym^2(\MC_{-1}(\cL_{0,\ell})))) )),
$$
where $\MC_{-1}$ is as in \cite[Chap\ptbl4 and Chap\ptbl8]{Katz96}. The monodromy representation of~$ \cH_\ell$
$$
\rho_\ell:\pi_1(\Afu_{\ZZ[\sfrac{1}{2}]}\setminus \{0,1,-1\})\to \GL_7(\QQ_\ell)
$$
takes values in $G_2(\QQ_\ell)\times \text{Center}(\GL_7(\QQ_\ell))$. For any $\QQ$-point $s$ of $\Afu_{\ZZ[\sfrac{1}{2}]}\setminus \{0,1,-1\}$ the composition of $\rho_\ell$ with the map of \'etale fundamental groups $G_\QQ:=\Gal(\ov\QQ/\QQ)\to \pi_1(\Afu_{\ZZ[\sfrac{1}{2}]}\setminus \{0,1,-1\})$ induced by $s$ by functoriality defines a rank-$7$ Galois representation
$$
\rho_\ell^s: G_\QQ\to \GL_7(\QQ_\ell),
$$
the {\it specialization of $\rho_\ell$ at $s$.} A Galois representation $\kappa:G_\QQ\to \GL_m(\QQ_\ell)$ is called {\it potentially automorphic} if there exists a finite field extension $K/\QQ$ and an automorphic representation $\pi$ of $\GL_m(\AA_K)$ ($\AA_K$ denoting the adèles of $K$) such that the $L$\nobreakdash-functions of $ \kappa$ and of $\pi$ match (up to finitely many factors). The following result can hence be interpreted as a weak version of Langlands conjectures on the automorphy of the Galois representations $\rho_\ell^s$:

\begin{theoreme}\label{thmpotaut}
For all $\QQ$-rational points $s$ of $\Afu_{\ZZ[\sfrac{1}{2}]}\setminus \{0,1,-1\}$ the specializations $\rho_\ell^s:G_\QQ\to \GL_7(\QQ_\ell)$ are potentially automorphic. In particular, the associated partial $L$-function has a meromorphic continuation
to the complex plane and satisfies the expected functional equation.
\end{theoreme}

\begin{proof}
It follows from \cite[Th\ptbl5.5.4]{Katz96} that for a fixed $\QQ$-rational point $s\neq -1,0,1$, the collection of Galois representations $(\rho_\ell^s)_{\ell \textrm{ prime}}$ has the property that for $p$ outside a finite set of primes $S$, the characteristic polynomial of the Frobenius elements at $p$ is an element in $\QQ[t]$ which is independent of $\ell.$ Moreover, each $\rho_\ell^s $ is pure of weight $8$ for each Frobenius element belonging to a prime outside $S\cup\{\ell\}$ (since the tensor operations preserve purity by doubling the weight and each middle convolution step increases the weight exactly by~$1$). By the motivic nature of $\MC_\chi$ and of tensor products, the system $(\rho_\ell^s)_\ell$ occurs in the cohomology of a smooth projective variety defined over $\QQ$ (use equivariant desingularization and the arguments from \cite{D-R10}, Section~2.4). In fact, it follows from the same arguments as in \cite[Section 3.3]{D-R10} that $\rho_\ell^s$, inside the $\ell$-adic cohomology of the underlying variety, is the kernel of a restriction morphism, cut further out by out by an algebraic correspondence associated to an involutive automorphism. Hence, for $s$ fixed, the restricted Galois representations $(\rho_\ell^s|_{\Gal(\ov\QQ_\ell/\QQ_\ell)})_\ell$ are crystalline at places of good reduction and de Rham elsewhere (this follows from the work of Faltings \cite{Faltings89} and is also implied by more general results of Tsuji \cite{Tsuji99}). The above can be summarized by saying that the system $(\rho_\ell^s)_{\ell}$ is weakly compatible and pure of weight~$8$ in the sense of~\cite{B-G-G-T10} and \cite{PatrikisTaylor}. It follows inductively from Poincar\'e duality and from the fact that the weight is even that the Galois representations $\rho_\ell^s$ factor through a map to the group of orthogonal similitudes ${\rm GO}_7(\QQ_\ell)$ with an even multiplier, \ie the system $(\rho_\ell^s)_{\ell}$ is essentially self\-dual in the sense of \cite{PatrikisTaylor}. We claim that the system is also Hodge-Tate regular, \ie the non-vanishing Hodge-Tate numbers
\[
\dim_{\ov\QQ_\ell}\gr^i(\rho_\ell^s\otimes B_{\DR})^{\Gal(\ov\QQ_\ell/\QQ_\ell)}
\]
(where $B_{\DR}$ denotes the usual de Rham ring) are all equal to $1$ (\cf \cite{B-G-G-T10}, \cite{PatrikisTaylor}): it follows from the de Rham comparison isomorphism (which is compatible with tensor products, filtrations, cycle maps and Galois operation (Faltings, \loccit)) and from Theorem~\ref{thmhodgeregular} that
\[
\dim_{\ov\QQ_\ell}\gr^i(\rho_\ell^s\otimes B_{\DR})^{\Gal(\ov\QQ_\ell/\QQ_\ell)}= \dim\gr^{-i}_F H_s\leq 1,
\]
as claimed. Taking the above properties together we conclude by \cite[Th\ptbl A and Cor\ptbl C]{PatrikisTaylor}.
\end{proof}

\backmatter
\newcommand{\SortNoop}[1]{}\def\cprime{$'$}
\providecommand{\bysame}{\leavevmode\hbox to3em{\hrulefill}\thinspace}
\providecommand{\MR}{\relax\ifhmode\unskip\space\fi MR }
\providecommand{\MRhref}[2]{%
  \href{http://www.ams.org/mathscinet-getitem?mr=#1}{#2}
}
\providecommand{\href}[2]{#2}

\end{document}